\documentclass[11pt,a4paper]{amsart}
\usepackage{a4wide}
\usepackage{amsfonts,amsthm,amsmath,amssymb}
\usepackage{amscd,color}
\usepackage{verbatim}
\usepackage{graphicx,subfigure}
\usepackage{pgfplots} 
\usepackage{mathtools}
\usepackage{marginnote}
\usepackage{soul}
\usepackage{mathtools}
\usepackage{comment}

\newtheorem*{thmA}{Theorem A}
\newtheorem*{thmB}{Theorem B}
\newtheorem*{thmC}{Theorem C}

\theoremstyle{plain}

\newtheorem{theorem}{Theorem}[section]
\newtheorem{lemma}[theorem]{Lemma}
\newtheorem{proposition}[theorem]{Proposition}

\newtheorem{remark}[theorem]{Remark}

\newcommand{\R}{\mathbb{R}}
\newcommand{\Q}{\mathbb{Q}}

\newcommand{\N}{\mathbb{N}}
\newcommand{\A}{\mathcal{A}}

\newcommand{\D}{\mathcal{D}}

\newcommand{\T}{\mathcal{T}}

\newcommand{\I}{\mathcal{I}} 
\newcommand{\HH}{\mathcal{H}}
\newcommand{\PP}{\mathcal{P}}

\newcommand{\loc}{\mathrm{loc}}

\newcommand{\wh}{\widehat}
\newcommand{\wt}{\widetilde}
\newcommand{\ol}{\overline}


\newcommand{\la}{\lambda}

\renewcommand{\r}{\color{red}}
\renewcommand{\b}{\color{blue}}

\newsavebox{\savepar}

\numberwithin{equation}{section}

\graphicspath{{./figures/}}

\begin{document}

\title[Chaotic dynamics, topological intersections and -smooth global maps] {Chaotic dynamics at the boundary of a basin of attraction via non-transversal intersections for a non-global smooth diffeomorphism}
\date{\today}

\author{Ernest Fontich}
\address{Departament de Matem\`atiques i Inform\`atica, Universitat de Barcelona (UB), 
Gran Via de les Corts Catalanes 585, 08007 Barcelona, Catalonia,  Spain
and 
Centre de Recerca Matem\`atica (CRM), 
08193 Bellaterra, Barcelona, Catalonia, Spain}
\email{fontich@ub.edu}
\author{Antonio Garijo}
\address{Departament d'Enginyeria Inform\`atica i Matem\`atiques,
Universitat Rovira i Virgili, 43007 Tarragona, Catalonia, Spain}
\email{antonio.garijo@urv.cat}
\author{Xavier Jarque}
\address{Departament de Matemàtiques i Informàtica, Universitat de Barcelona, Gran Via, 585, 08007 Barcelona, Catalonia and  Centre de Recerca Matemàtica, Edifici C, Campus Bellaterra, 08193 Bellaterra, Catalonia}
\email{xavier.jarque@ub.edu}

\thanks{The first author is supported by the
 grant PID2021-125535NB-I00.   
 The second and  third  authors were  supported by MICIU/AEI grant PID2020-118281GB-C32(33) Both grant are funded by MICIU/AEI/10.13039/501100011033/FEDER,UE.  The second author is supported by  Generalitat de Catalunya  2021SGR-633. We want to thank  the Thematic Research Programme {\it Modern holomorphic dynamics and related fields}, Excellence Initiative – Research University programme at the University of Warsaw.
Finally, this work has also been funded through the Mar\'ia de Maeztu Program for Centers and Units of Excellence in R\&D, grant CEX2020-001084-M funded by MICIU/AEI/10.13039/501100011033/FEDER,UE.}

\begin{abstract}
	In this paper we give analytic proofs of the existence of transversal homoclinic points for a family of non-globally smooth diffeomorphisms having the origin as a fixed point which come out as a truncated map governing the local dynamics near a critical period three cycle associated to the Secant map. Using Moser's version  of Birkhoff-Smale's Theorem, we prove that the boundary of the basin of attraction of the origin  contains a  Cantor-like invariant subset such that the restricted dynamics to it is conjugate to the full shift of $N$-symbols for any integer $N\ge 2$ or infinity. 
\end{abstract}

\maketitle

%
%
%
%
%
%

\section{ Introduction}

The question of whether a dynamical system admits invariant subsets of the phase portrait in which the dynamics is {\it chaotic} goes back to the origins of this area of mathematics. 
Studying the existence, or not, of chaotic dynamics and determine the topology and the geometry of the subsets where this happens  has  become a classical problem.  Nevertheless, since the question arises in so many distinct scenarios there has been different approaches to this phenomena, including the use of non-equivalent mathematical definitions in order to capture the meaning of {\it chaos} in each particular case. 
Measuring chaos in high, or even infinite, dimensional Hamiltonian dynamical systems or doing so for one-dimensional interval dynamics requires to particularize the meaning of the word {\it chaos} to concrete mathematical definitions.  

Nonetheless, once we agree on which dynamical properties characterize chaos (density of periodic points, transitivity, dense orbits, sensibility with respect to initial conditions, all at once, ...) a common accepted approach to ensure chaotic dynamics is to show that, in certain dynamically invariant region(s) of our phase portrait, the dynamics is {\it conjugate} (that is, {\it equal} up to a homeomorphism) to the one of a model for which it is somehow easy to test the properties mentioned above. 

The usual toy model is the dynamical system $(\Sigma_N,\sigma)$, where 
$\Sigma_N$ is the set of bi-infinite (or one-side) sequences of $N\ge 2$ symbols and $\sigma$ is the {\it shift map}; see \cite{Mos}. One can easily check that the system $(\Sigma_N,\sigma)$ captures  the dynamical properties proposed above.  Since the conjugacy sends orbits of our dynamical systems to orbits of the shift map acting on the space of symbols, this methodology is also known as {\it symbolic dynamics}. To focus on the content of this paper and simplify the discussion, let us assume we have a discrete dynamical system in $\mathbb R^2$ generated by the iterates of a (smooth) map.

In any event, the difficult part to apply this strategy is to show that in some regions of the phase portrait our dynamics is conjugate to the dynamical system $(\Sigma_N,\sigma)$. A major result in this direction goes back to the cornerstone ideas of S. Smale (Birkhoff-Smale's Theorem) and J. Moser \cite{Mos} who provide {\it checkable} (in some cases only numerically) dynamical conditions to ensure that a given dynamical system has a subset of the phase portrait whose dynamics is conjugated to the full-shift of an arbitrary number of symbols (even infinitely many). Roughly speaking they showed that if a smooth map has a transversal homoclinic intersection between the stable and unstable invariant manifolds of a hyperbolic saddle fixed point then, there is an invariant Cantor set whose restricted dynamics is conjugate to $(\Sigma_N,\sigma)$.

Even though the results have been extremely helpful in many different contexts (and extended in many different directions) we emphasize that the hypotheses include three key ingredients: the {\it hyperbolicity} of the saddle point, the map is a {\it global diffeomorphism} and the {\it transversality} of the intersection of the invariant manifolds. The main goal of this paper is to address the presence of chaotic dynamics, for a concrete family of maps, under the lack of two of the conditions; the inverse map would not be globally smooth and in a first step we only can prove (analytically) that we have an intersection with a finite order contact.

Concretely, in this paper we consider the map
\begin{equation}\label{eq:T}
	T_{d}\left(
	\begin{array}{l}
		x   \\ 
		y 
	\end{array}
	\right)  = 
	\left(
	\begin{array}{l}
		y -  (x+y)^d \\ 
		y - 2(x+y)^d
	\end{array}
	\right), 
\end{equation}
with $d\geq 3$ being odd. Such map is a {\it truncated} expression  of the third iterate of the (extended) Secant map applied to a polynomial $p(x)$ near a critical period thee cycle 
$$
(c,c) \mapsto (c,\infty) \mapsto (\infty,0) \mapsto (c,c),
$$  
where $p^{\prime}(c)=0$ (but $p(c)\ne 0$). See \cite{BedFri, Tangent,GarJar_infinity, FGJ24} for more details. For later discussions we point out here that $T_d$ is a global homeomorphism, but it is not a global diffeomorphism since the inverse map, $T_d^{-1}$, is not smooth over the line $\{y=x\}$ (see \eqref{eq:T_inverse_odd} for its particular expression). 

One can  easily check that the origin of \eqref{eq:T} is a fixed point and its basin of attraction 
\begin{equation}\label{eq:basin}
	\mathcal A_{d}(0) = \{ (x,y) \in \mathbb R^2\, | \ T_{d}^n(x,y) \to (0,0) \,  \hbox{ as }  \, n \to \infty  \} 
\end{equation}
is not empty. In \cite{FGJ24} we proved the following topological description of 
$\mathcal A_{d}(0)$ and further information about its boundary. We denote  $p_0=(0,1)$ and $p_1=(0,-1)$.

\begin{theorem}\label{theorem:paper1}
	Let $d\geq 3$ odd. Then $\mathcal A_{d}(0)$ is an open, simply connected, unbounded  set. Moreover, $\partial  \mathcal A_{d}(0)$ contains the stable manifold of the hyperbolic two-cycle  $\{p_0,p_1\}$ lying in $\partial  \mathcal A_{d}(0)$. 
\end{theorem}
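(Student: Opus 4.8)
The plan is to separate the local analysis at the origin from the global description of its basin. First I would study the linear part $DT_d(0,0)=\left(\begin{smallmatrix}0&1\\0&1\end{smallmatrix}\right)$, whose eigenvalues are $0$ (eigenvector $(1,0)$, a strong stable direction) and $1$ (eigenvector $(1,1)$, a center direction). In the coordinates $\xi=x-y$, $\eta=y$, for which $x+y=\xi+2\eta$, the map reads $\xi'=(\xi+2\eta)^d$, $\eta'=\eta-2(\xi+2\eta)^d$; center manifold theory produces a curve $\xi=h(\eta)=2^d\eta^d+O(\eta^{d+1})$ on which the reduced one–dimensional dynamics is $\eta\mapsto\eta-2^{d+1}\eta^d+O(\eta^{2d-1})$. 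Since $d\ge3$ is \emph{odd}, this scalar map is asymptotically stable at $0$ (it would merely be semi-stable for even $d$), so by the reduction principle the origin is asymptotically stable for $T_d$, and a direct estimate exhibits a small box $R$ around $0$ with $T_d(R)\subseteq R$ and $R\subseteq\mathcal A_d(0)$. As $T_d$ is a global homeomorphism, $\mathcal A_d(0)=\bigcup_{n\ge0}T_d^{-n}(R)$ is then an increasing union of open topological disks, hence open, connected and simply connected.

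For unboundedness I would use the explicit inverse $T_d^{-1}(X,Y)=\bigl((X-Y)^{1/d}-2X+Y,\;2X-Y\bigr)$ of \eqref{eq:T_inverse_odd} and track the backward orbit $(X_k,Y_k)=T_d^{-k}(q_0)$ of some $q_0\in\mathcal A_d(0)\setminus\{0\}$. Writing $s_k=X_k-Y_k$ and $w_k=X_k+Y_k$, one obtains the recursion $w_{k+1}=s_k^{1/d}$ and $s_{k+1}=s_k^{1/d}-w_k-3s_k$; once $|s_k|$ passes a fixed threshold the term $-3s_k$ dominates the $1/d$-th powers and $|s_{k+1}|\ge2|s_k|$, so $|s_k|\to\infty$. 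Since $(X_k,Y_k)\in\mathcal A_d(0)$ for every $k$ by full invariance, verifying (a finite computation) that the orbit reaches that threshold shows $\mathcal A_d(0)$ is unbounded.

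For the boundary statement I would first record that $\{p_0,p_1\}$ is a hyperbolic saddle two–cycle: $DT_d(p_0)=DT_d(p_1)=M:=\left(\begin{smallmatrix}-d&1-d\\-2d&1-2d\end{smallmatrix}\right)$ satisfies $\operatorname{tr}M=1-3d$ and $\det M=d$, so the eigenvalues of $M$ are real (discriminant $(d-1)(9d-1)>0$) with exactly one inside the unit circle and one outside, whence $D(T_d^{2})(p_0)=M^{2}$ has eigenvalues $0<\la_s^{2}<1<\la_u^{2}$. Set $W^s=W^s(\{p_0,p_1\})$. Points of $W^s$ converge to the cycle and not to the origin, so $W^s\cap\mathcal A_d(0)=\emptyset$, and since $\mathcal A_d(0)$ is open it only remains to show $W^s\subseteq\overline{\mathcal A_d(0)}$. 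The crucial step — and the one I expect to be the real obstacle — is to prove that one of the two branches of $W^u(\{p_0,p_1\})$ lies in $\mathcal A_d(0)$, i.e.\ that the forward orbit of a concrete arc of the unstable manifold falls into the trapping box $R$ (or into some $T_d^{-N}(R)$); this is a finite, possibly computer–assisted, verification.

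Granting it, the local saddle picture together with the inclination ($\lambda$-)lemma yields a one–sided neighbourhood $V$ of $p_0$, bounded by $W^s_{\loc}(p_0)$, with $V\subseteq\mathcal A_d(0)$. Then for an arbitrary $q\in W^s$ one iterates forward until $T_d^{2n}(q)$ lies on $W^s_{\loc}(p_0)\subseteq\overline V$, and transports points of $\mathcal A_d(0)$ near $T_d^{2n}(q)$ back by the homeomorphism $T_d^{-2n}$ to obtain points of $\mathcal A_d(0)$ accumulating on $q$; hence $W^s\subseteq\partial\mathcal A_d(0)$, and in particular $p_0,p_1\in\partial\mathcal A_d(0)$. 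The only places requiring genuine work are thus the escape estimate for a single backward orbit and the certification that a branch of $W^u(\{p_0,p_1\})$ enters $\mathcal A_d(0)$; the remaining ingredients are the finite-order center–manifold computation, the eigenvalue arithmetic for $M$, the planar topology of increasing unions of disks, and the $\lambda$-lemma.
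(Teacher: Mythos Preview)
This theorem is not proved in the present paper; it is quoted from the authors' earlier work \cite{FGJ24}, so there is no in-paper proof against which to compare your attempt directly.

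Your outline is structurally sound and correctly isolates the two genuine obstacles: (i) showing that some backward orbit in $\mathcal A_d(0)$ escapes to infinity, and (ii) showing that a branch of $W^u(\{p_0,p_1\})$ enters $\mathcal A_d(0)$. The local ingredients are all correct: the linearization $DT_d(0,0)=\left(\begin{smallmatrix}0&1\\0&1\end{smallmatrix}\right)$, the center--manifold reduction giving $\eta\mapsto\eta-2^{d+1}\eta^d+\cdots$ (asymptotically stable precisely because $d$ is odd), the increasing-union-of-disks argument for simple connectivity (which does require a trapping disk $R$; such an $R$ exists by converse Lyapunov theory once asymptotic stability is established), the eigenvalue arithmetic for $M=DT_d(p_j)$, and the backward recursion $w_{k+1}=s_k^{1/d}$, $s_{k+1}=s_k^{1/d}-w_k-3s_k$, whose eventual geometric growth once $|s_k|$ is large and $|w_k|\le |s_k|$ is as you describe.

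However, you explicitly defer both hard steps to ``finite, possibly computer-assisted'' verifications, and these are exactly where the content of \cite{FGJ24} lies. From the summary in Section~\ref{sect:prelimpaper1} one sees that \cite{FGJ24} does \emph{not} proceed by numerics: the localization of $W^u_{p_0}$ is achieved by constructing an explicit triangle $\D$ (vertex $p_1$, sides of slopes $m^+$ and $m^\star=7/2$) and proving analytically, via monotonicity and convexity estimates on the components of $T_d$ restricted to the sides of $\D$, that a connected piece of $W^u_{p_0}$ is trapped in $T_d(\D)$. The present paper reproduces this trapping-region technique in full detail for $W^s_{p_1}$ in Section~\ref{sec:homoclinic} (Lemmas~\ref{fitescomponentsgamma}--\ref{lemma:stable_manifold_p1}), which gives a faithful picture of the method. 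So your strategic skeleton (branch of $W^u$ in the basin, then propagate along $W^s$ via the $\lambda$-lemma) matches what is done, but the substantive analytic work that fills your two black boxes is of a rather different, hands-on geometric flavour than the direct verification you propose.
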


\begin{figure}[ht]
	\centering
	\includegraphics[width=0.3\textwidth]{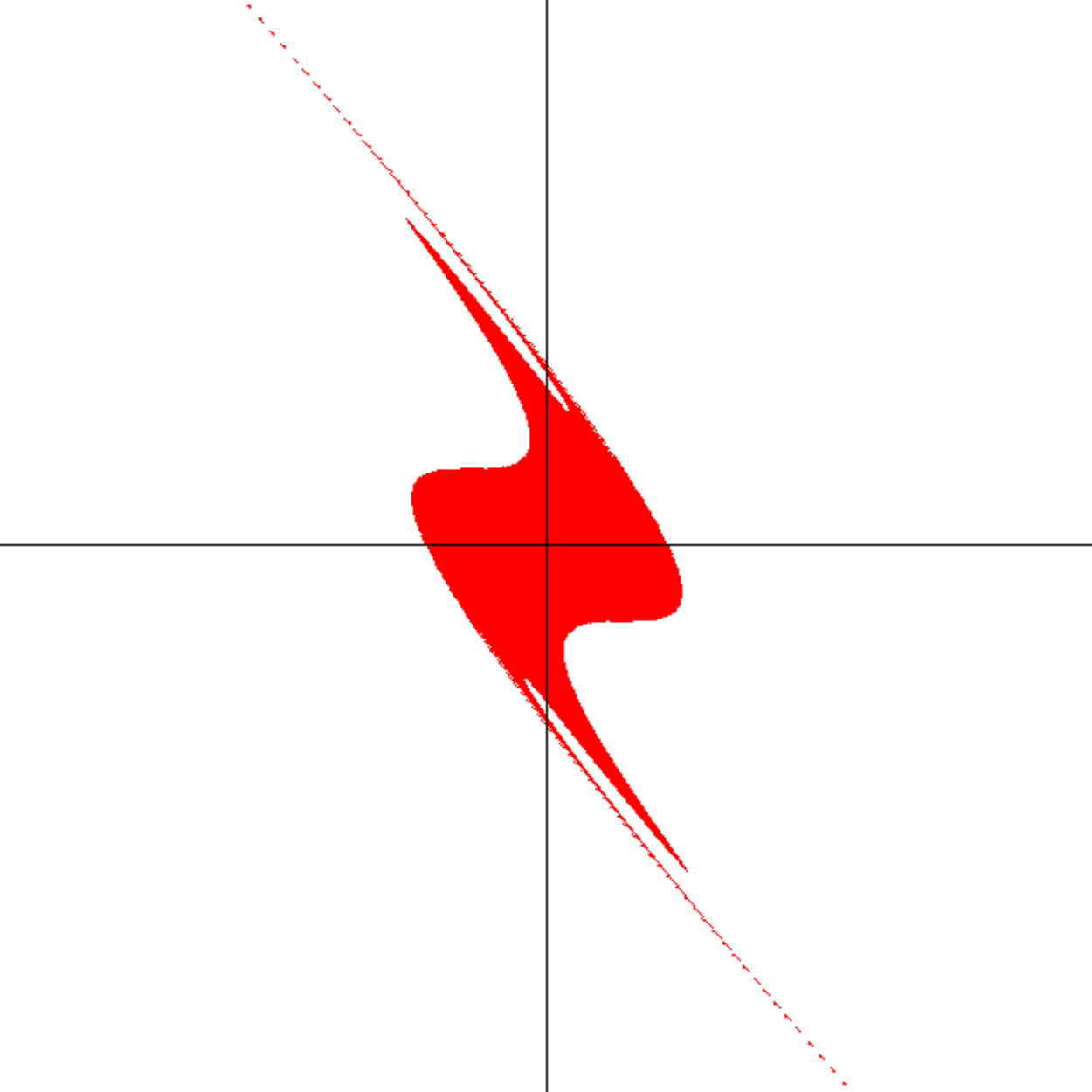}
	\put(-69,136){\tiny $y$}
	\put(2,65){\tiny $x$}
	\put(-58,78){\tiny $\mathcal A_{3}(0)$}
	\caption{\small{The picture (in red) of the set $\mathcal A_{3}(0)$. Notice that according to Theorem \ref{theorem:paper1} the red region is connected, simply connected and unbounded}}
	\label{fig:dynamical_plane}
\end{figure}

The thesis of the above theorem glimpse the possible topological complexity of $\partial \mathcal A_{d}(0)$ (see Figure \ref {fig:dynamical_plane}). In fact, the main goal of this paper is to provide a better understanding of $\partial \mathcal A_{d}(0)$ by proving that, apart from the stable manifold of the hyperbolic two-cycle  $\{p_0,p_1\}$ there is a Cantor subset of $\partial \mathcal A_{d}(0)$ where the dynamics is conjugated to the
one of the shift of $N$ symbols and so inhering all its chaotic dynamics.

In \cite{FGJ24} we were able to describe and bound the shape of a piece of the unstable manifold of $p_0$ for $T^2_d$  (and it was a key point in the arguments to prove Theorem \ref{theorem:paper1}). In this paper we mimic some of the arguments there to control the shape of a piece of the stable manifold of $p_1$.  Using both constructions and a singular $\lambda$-Lemma \cite{Ray} we can ensure the existence of homoclinic (not necessarily linearly transversal) points for $T_d^2$.

\begin{thmA} 
	Let $\{p_0,p_1\}$ be the hyperbolic two cycle lying in the boundary of $\partial  \mathcal A_{d}(0)$. Then, the stable and unstable manifolds of $p_1$ (as well as $p_0$), as a fixed point for $T_d^2$,  intersect at a homoclinic point. 
\end{thmA}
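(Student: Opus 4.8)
The plan is to construct explicit pieces of the unstable manifold $W^u(p_1)$ and the stable manifold $W^s(p_1)$ for the map $F:=T_d^2$, to locate them in regions of $\mathbb{R}^2$ that are forced to meet by topological reasons, and then to upgrade this crossing into a genuine intersection of the invariant manifolds using the singular $\lambda$-lemma of \cite{Ray}. First I would set up coordinates near $p_1=(0,-1)$ and near $p_0=(0,1)$ in which $F=T_d^2$ is a hyperbolic saddle: since $\{p_0,p_1\}$ is the hyperbolic two-cycle of $T_d$, each point is a hyperbolic fixed point of $F$, and I would compute $DF$ at $p_1$ (via the chain rule $DT_d(p_0)\cdot DT_d(p_1)$) to identify the stable and unstable eigendirections and the (necessarily real, since $d$ is odd) multipliers. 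The local invariant manifolds $W^s_{\mathrm{loc}}(p_1)$, $W^u_{\mathrm{loc}}(p_1)$ then exist by the stable/unstable manifold theorem; the issue is entirely about their \emph{global} continuations meeting.

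Next I would propagate a suitable arc of $W^u_{\mathrm{loc}}(p_1)$ forward under $F$ and an arc of $W^s_{\mathrm{loc}}(p_1)$ backward under $F$ (i.e.\ forward under $F^{-1}=T_d^{-2}$), controlling their images by the same kind of cone/graph-transform estimates used in \cite{FGJ24} to bound a piece of the unstable manifold of $p_0$ for $T_d^2$. Concretely, the strategy is: (i) show that the forward iterates of the unstable arc of $p_1$ stay in a controlled region and eventually sweep across a ``horizontal'' strip; (ii) show — this is the new computation, mirroring \cite{FGJ24} but now for the stable side — that a backward-iterated arc of $W^s(p_1)$ is trapped in a ``vertical'' strip crossing the same region; (iii) conclude by a connectedness/intermediate-value argument that the (closures of the) two arcs must intersect at some point $q$. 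Because $T_d$ is only a homeomorphism and $T_d^{-1}$ fails to be smooth across $\{y=x\}$, I would need to check that the relevant arcs of $W^s(p_1)$ are iterated backward only through regions where $T_d^{-2}$ is smooth, or else invoke the \emph{singular} $\lambda$-lemma precisely at the step where the stable manifold is pushed across the bad line $\{y=x\}$ — this is the reason \cite{Ray} rather than the classical $\lambda$-lemma is needed.

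The point $q$ obtained above lies in $\overline{W^u(p_1)}\cap \overline{W^s(p_1)}$; to finish I would argue it is a genuine homoclinic point, i.e.\ $q\in W^u(p_1)\cap W^s(p_1)$, by showing the intersecting objects are actual manifold arcs (not merely limit sets): the forward image of the unstable arc is a smooth arc since $T_d$ is smooth, and the backward image of the stable arc is a smooth arc on the good side of $\{y=x\}$, with the singular $\lambda$-lemma guaranteeing that accumulation of $W^u(p_1)$ on $W^s(p_1)$ near the crossing yields true transverse-or-tangential intersection points of $W^u$ with $W^s$. Finally, since $F=T_d^2$ commutes with $T_d$ and $T_d$ interchanges $p_0\leftrightarrow p_1$, applying $T_d$ to the homoclinic point of $p_1$ produces one for $p_0$, giving the parenthetical ``as well as $p_0$'' for free. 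I expect the main obstacle to be step (ii)–(iii): obtaining sharp enough analytic control on the backward iterates of the stable manifold of $p_1$ — in particular managing the non-smoothness of $T_d^{-1}$ on $\{y=x\}$ — so that the geometric crossing of the two arcs is rigorously forced rather than merely numerically plausible.
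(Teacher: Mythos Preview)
Your outline differs from the paper's proof in a structural way, and it also misidentifies the role of the singular $\lambda$-lemma.

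The paper does \emph{not} attempt to trap arcs of $W^u(p_1)$ and $W^s(p_1)$ directly and force them to cross. Instead it first proves a \emph{heteroclinic} intersection: using the triangle $\mathcal D$ from \cite{FGJ24} to localize a piece of $W^u_{p_0}$ in $T_d(\mathcal D)$, and a new triangle $\widehat{\mathcal D}$ whose preimage $T_d^{-1}(\widehat{\mathcal D})$ traps a piece of $W^s_{p_1}$, it shows these two regions overlap so that $W^u_{p_0}$ and $W^s_{p_1}$ must cross at some point $q$ (Proposition~\ref{prop:hetero}). Then, crucially, the odd symmetry $(x,y)\mapsto(-x,-y)$ of $T_d$ gives a second heteroclinic point $-q\in W^u_{p_1}\cap W^s_{p_0}$. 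Only now does the singular $\lambda$-lemma enter: since $W^u_{p_1}$ meets $W^s_{p_0}$ (with finite-order contact, possibly non-transversally), iterated discs of $W^u_{p_1}$ accumulate $C^1$ on $W^u_{p_0}$ near $p_0$; but $W^u_{p_0}$ crosses $W^s_{p_1}$ at $q$, so these discs in $W^u_{p_1}$ must also cross $W^s_{p_1}$, yielding the homoclinic point. The heteroclinic-plus-symmetry detour is what lets the paper reuse the hard estimate on $W^u_{p_0}$ already obtained in \cite{FGJ24} instead of redoing it for $W^u_{p_1}$.

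Your proposal has a genuine gap in the use of \cite{Ray}. The ``singular'' in the singular $\lambda$-lemma refers to allowing the disc to meet the stable manifold with a \emph{non-transversal} (finite-order) contact, not to handling the non-differentiability of $T_d^{-1}$ on $\{y=x\}$. In the paper the control of $W^s_{p_1}$ via $T_d^{-1}(\widehat{\mathcal D})$ is obtained by direct estimates (Lemmas~\ref{lem:convergencia-a-p1}--\ref{lemma:stable_manifold_p1}) with no $\lambda$-lemma involved, and the relevant backward iterates stay in $T_d^{-1}(\widehat{\mathcal D})$ where smoothness is not an issue. Moreover, if you had genuinely produced crossing arcs of $W^u(p_1)$ and $W^s(p_1)$ as in your steps (i)--(iii), the intersection point would already \emph{be} a homoclinic point and no $\lambda$-lemma would be needed at all; your last paragraph conflates ``arcs cross topologically'' with ``closures intersect''. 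Finally, your step (i) --- controlling forward iterates of $W^u(p_1)$ --- is asserted by analogy with \cite{FGJ24}, but that reference controls $W^u_{p_0}$, not $W^u_{p_1}$; you would either need a new estimate or, as the paper does, transport the existing one via the symmetry and the $\lambda$-lemma.
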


Going back to our previous arguments, if we want to apply Birkhoff-Smale's Theorem we need to prove the existence of transversal homoclinic points, so that Theorem A is not enough. In \cite{CR} the authors are able to conclude transversal intersections under the presence of (topological) homoclinic intersections, but their map is area preserving, is a global smooth diffeomorphism and admits, in a sufficiently small neighbourhood the hyperbolic saddle, a concrete local normal form which provides a first integral. 

In our case we have not the previously mentioned  normal form and since $T_d^{-1}$ is not smooth over the line $\{y=x\}$ we cannot use that the globalization of the stable manifold of the 2-cycle $\{p_0,p_1\}$ by applying $T_d^{-2}$ is analytic. In any event, inspired in the strategy proof in \cite{CR}, using alternative  arguments to deal with our weaker conditions we are able to conclude the existence of transversal homoclinic intersections. 

\begin{thmB}
	Let $\{p_0,p_1\}$ be the hyperbolic two cycle lying in the boundary of $\partial  \mathcal A_{d}(0)$. Then, the stable and unstable manifolds of $p_1$ (as well as $p_0)$, as a fixed point for $T_d^2$, intersect transversally. 
\end{thmB}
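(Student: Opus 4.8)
The plan is to adapt the contradiction scheme of \cite{CR}, compensating for the two features of that argument which fail here — the absence of an area‑preserving normal form with a local first integral near the saddle, and the fact that, since $T_d^{-1}$ is not smooth on $\{y=x\}$, the globalised stable manifold need not be analytic or even $C^1$ — by means of the quantitative control of $W^u(p_1)$ obtained in \cite{FGJ24}, the analogous control of $W^s(p_1)$ developed in the previous sections, and the singular $\lambda$‑lemma of \cite{Ray}.

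Assume, towards a contradiction, that $W^u(p_1)$ and $W^s(p_1)$ are not transversal at the homoclinic point $q$ of Theorem~A. By Theorem~A the contact at $q$ is of finite order $k\ge 2$, and if the two manifolds met transversally at any point of the $T_d^2$‑orbit of $q$ we would already be done, so we may assume they meet only tangentially along that orbit. First I would transport the tangency to a neighbourhood $U$ of $p_1$: since $T_d^2$ is a polynomial, hence a smooth local diffeomorphism near the hyperbolic fixed point $p_1$, and $q\in W^s(p_1)$, the forward iterates $z_n:=T_d^{2n}(q)$ enter $U$ and for $n$ large lie on $W^s_{\mathrm{loc}}(p_1)$, converging to $p_1$; being a diffeomorphism, $T_d^2$ carries the order‑$k$ contact at $q$ to an order‑$k$ contact of $W^u(p_1)$ with $W^s_{\mathrm{loc}}(p_1)$ at every $z_n$.

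The heart of the argument is then a clash between this picture and hyperbolicity. On one side, the tangent line of $W^u(p_1)$ at $z_n$ equals that of $W^s_{\mathrm{loc}}(p_1)$ at $z_n$, hence converges, as $n\to\infty$, to the stable eigendirection $E^s(p_1)$; a direct computation with $DT_d^2(p_1)$ moreover shows that the curvature of $W^u(p_1)$ at $z_{n+1}$ equals that at $z_n$ times $|\lambda|/|\mu|^2$, with $\lambda,\mu$ the eigenvalues of $DT_d^2(p_1)$, $|\mu|<1<|\lambda|$, and since $|\lambda||\mu|=d^2$ and $|\mu|<1$ this factor exceeds $1$, so the curvature blows up along the sequence. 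On the other side, the branch of $W^u(p_1)$ through $z_n$ is the image under a high iterate of $T_d^2$ of a fixed arc of $W^u(p_1)$ near $q$ having, by Theorem~A, only a finite‑order tangency with $W^s(p_1)$; the singular $\lambda$‑lemma of \cite{Ray} — which applies precisely to such tangential disks and does not require the unavailable smoothness of $T_d^{-1}$ — together with the shape bounds for $W^u(p_1)$ from \cite{FGJ24}, forces these iterates to become $C^1$‑close, away from the tangency point, to $W^u_{\mathrm{loc}}(p_1)$, hence to have tangent directions near the unstable eigendirection $E^u(p_1)$ and controlled curvature. Since $E^s(p_1)\ne E^u(p_1)$, the two conclusions are incompatible, so $W^u(p_1)$ and $W^s(p_1)$ are transversal at $q$; the analogous statement for $p_0$ follows from $T_d(W^{u/s}(p_1))=W^{u/s}(p_0)$ for $T_d^2$, applied, if necessary, at a forward iterate of $q$ lying off $\{x+y=0\}$.

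I expect the main obstacle to be exactly the place where \cite{CR} cannot be copied, namely the singular line $\{y=x\}$: one has to ensure that $q$ and the arcs of $W^u(p_1)$ and $W^s(p_1)$ entering the argument, as well as the iterates used above, meet $\{y=x\}$ only in a way the singular $\lambda$‑lemma can absorb, so that the $C^1$ (and curvature) control propagates across it; and one has to make uniform, along the whole sequence $z_n\to p_1$, the estimate comparing $W^u(p_1)$ near $z_n$ with the linearisation of $T_d^2$ at $p_1$, which calls either for a $C^2$‑linearisation of $T_d^2$ at $p_1$ (a non‑resonance check on $\lambda,\mu$) or for a direct $C^2$ bootstrap. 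The delicate part is this bookkeeping near the singular line and in the linearisation neighbourhood, rather than any single isolated estimate.
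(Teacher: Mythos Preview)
Your plan has a genuine gap: the ``contradiction'' you aim for is not a contradiction. The two conclusions you juxtapose live at different points of the iterated arc. At the tangency point $z_n\in W^s_{\loc}$ the tangent of the $W^u$-branch is along $E^s$; \emph{away} from $z_n$ the singular $\lambda$-lemma gives $C^1$-closeness to $W^u_{\loc}$, hence tangent near $E^u$. A smooth embedded arc can perfectly well have tangent $E^s$ at one point and tangent close to $E^u$ elsewhere; in linearising coordinates the picture is exactly the familiar ``tongue'' $\xi=C_n(\eta-\mu^n\eta_0)^{\ell}$ with $C_n\to\infty$, which is tangent to the $\eta$-axis at its tip and $C^1$-flat along the $\xi$-axis away from it. Tangential homoclinics do exist for non-area-preserving diffeomorphisms, so a scheme that tries to exclude them outright, without using more structure, cannot succeed. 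Your curvature observation does not rescue this: for contact order $\ell\ge 3$ the curvature at $z_n$ vanishes, so the factor $\lambda/\mu^2$ multiplies zero; and even for $\ell=2$ the $\lambda$-lemma gives only $C^1$ control, not the $C^2$ bound you would need on the other side.

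The paper takes a different route and does \emph{not} attempt to show that the original homoclinic $q$ is transversal. Instead it produces \emph{new} homoclinic points near $q$ and proves those are transversal. The substitutes for the two tools of \cite{CR} that fail here are: (i) Sternberg's $C^\infty$ linearisation of $T_d^2$ at $p_1$, available because $\lambda,\mu$ are non-resonant (checked via $\lambda\mu=d^2$ and $\sqrt{9d^2-10d+1}\notin\mathbb Q$), in place of the Birkhoff normal form; and (ii) the explicit first integral $H(\xi,\eta)=\xi^{\log\mu^{-1}}\eta^{\log\lambda}$ of the interpolating linear flow $\mathcal L^\tau$, in place of $xy$. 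In the linearised chart the pieces $\wh W^u$ and $\wt W^s$ near $q_s=(0,\eta_0)$ and $q_u=(\xi_0,0)$ are reparametrised by the value $s$ of $H$; one then finds $\tau(s)$ with $\mathcal L^{\tau(s)}\tilde\gamma_1(s)=\tilde\gamma_2(s)$ and $\tau(s)\to\infty$, hence integer times $\tau(s_k)=k$, and a direct asymptotic computation of the derivatives shows $\mathcal L^k\tilde\gamma_1$ and $\tilde\gamma_2$ meet transversally at $s_k$ for $k$ large. The singular line $\{y=x\}$ is handled earlier, not by the $\lambda$-lemma, but by an explicit reparametrisation $u=t^{1/d}$ showing that $W^s$ still admits an analytic parametrisation across each crossing, so the finite-order-contact input survives.
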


Although from the previous theorem, we have the existence of transversal homoclinic points we still cannot directly apply Birkhoff-Smale's Theorem since the inverse map, $T_d^{-1}$ map is not globally smooth. However, we can overcome this difficulty and prove the main result of this paper.

\begin{thmC}
	There exists an invariant Cantor set, contained in  $\partial \A_{a,d}(0)$, where the dynamics of $T_d^2$ is conjugate to the full shift of $N$-symbols. In particular,  $\partial \A_{a,d}(0)$ contains  infinitely many periodic points with arbitrary high period.
\end{thmC}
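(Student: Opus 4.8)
The plan is to derive Theorem C from Theorem B by a careful adaptation of Moser's version of the Birkhoff--Smale theorem (as in \cite{Mos}) that only requires smoothness of the map $T_d$ itself together with hyperbolicity of the fixed point and transversality of a homoclinic intersection, but \emph{not} global smoothness of the inverse. Concretely, work with $F:=T_d^2$, for which $p_1$ is a hyperbolic fixed point (equivalently one may center the picture at $p_0$; the two cases are interchanged by $T_d$). By Theorem B there is a point $q\in W^s(p_1)\cap W^u(p_1)$ at which the two manifolds cross transversally. First I would fix a small neighbourhood $U$ of $p_1$ in which $F$ is linearizable (or at least in which one has good hyperbolic estimates and local stable/unstable manifolds are graphs), and pick a large iterate $n$ so that the forward orbit segment of $q$ returns to $U$: the piece of $W^u(p_1)$ through $q$, pushed forward by $F^n$, and the piece of $W^s(p_1)$ through $q$, pulled back by $F^n$, form a ``Smale horseshoe'' configuration near $p_1$. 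The key point is that pushing the unstable manifold \emph{forward} only uses $F$ (which is smooth), and the relevant piece of the stable manifold near $p_1$ is the \emph{local} stable manifold, which is obtained from the contraction of $F$ on $U$ and hence is also produced without ever inverting $T_d$ globally.

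Next I would set up the coding. Choose a small topological rectangle (a ``box'') $R$ around $p_1$ adapted to the local stable/unstable directions, and for each symbol one takes a thin horizontal sub-box whose image under a suitable power $F^{m_i}$ is a thin vertical sub-box crossing $R$ fully in the unstable direction, exactly as in the classical construction; the homoclinic point $q$ supplies one such ``return'', and the linear hyperbolic dynamics near $p_1$ supplies the others, so that for every $N\ge 2$ (and for $N=\infty$, using countably many returns with increasing travel times) one obtains $N$ disjoint horizontal strips $H_1,\dots,H_N$ and vertical strips $V_1,\dots,V_N$ with $F^{k}(H_i)\supset V_i$ in the Markov sense, with uniform cone conditions and uniform hyperbolicity constants. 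The invariant set is then $\Lambda=\bigcap_{j\in\Z}F^{-jk}\big(\bigcup_i H_i\big)$; the cone field and the $\lambda$-lemma give that $\Lambda$ is a Cantor set and that the natural itinerary map $\Lambda\to\Sigma_N$ is a homeomorphism conjugating $F|_\Lambda$ to $\sigma$. Since everything here is built from forward iteration of $F=T_d^2$ and from the \emph{local} inverse of $F$ near $p_1$ (where $F$ is a local diffeomorphism because $\det DF(p_1)\ne 0$), the failure of $T_d^{-1}$ to be globally smooth along $\{y=x\}$ never enters; this is precisely the point that needs to be checked carefully, and I expect it to be the main obstacle, namely verifying that the classical horseshoe construction can be localized so that no global inverse of $T_d$ is used, while the transversality from Theorem B is exactly what guarantees the cone conditions at the homoclinic corner.

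Finally I would verify that $\Lambda\subset\partial\A_{a,d}(0)$. By Theorem~\ref{theorem:paper1}, $W^s(\{p_0,p_1\})\subset\partial\A_{a,d}(0)$, and $\partial\A_{a,d}(0)$ is closed and forward invariant (its complement being the union of the open basin and the open ``escaping/other'' region, which are invariant). Each point of $\Lambda$ has an orbit that accumulates on $p_1$ along its itinerary and in fact lies in the closure of $W^s(p_1)$: indeed $\Lambda$ is contained in the closure of the set of homoclinic and stable points, all of which belong to $\partial\A_{a,d}(0)$; alternatively, one shows directly that no point of $\Lambda$ is attracted to $0$ and none lies in the interior of the complement of the basin, using that $\Lambda$ is a hyperbolic set disjoint from a neighbourhood of $0$. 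Hence $\Lambda\subset\partial\A_{a,d}(0)$. The conjugacy with $\sigma$ on $\Sigma_N$ then transports the abundance of periodic orbits of the shift: for each period there are periodic points of $F=T_d^2$ in $\Lambda$, and since periodic points of $\sigma$ of arbitrarily high (least) period exist, $\partial\A_{a,d}(0)$ contains infinitely many periodic points of $T_d$ with arbitrarily high period, which is the last assertion.
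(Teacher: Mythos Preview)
Your overall strategy---apply Moser's version of Birkhoff--Smale to the transversal homoclinic point supplied by Theorem~B, and then locate the resulting Cantor set in $\partial\A_d(0)$ via the density of homoclinic points and the inclusion $W^s_{p_1}\subset\partial\A_d(0)$---is exactly the paper's strategy, and your argument for $\Lambda\subset\partial\A_d(0)$ matches the paper's closing paragraph.

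However, your handling of the non-smoothness of $T_d^{-1}$ contains a genuine gap. You assert that the horseshoe construction uses only forward iterates of $F=T_d^2$ together with the \emph{local} inverse of $F$ near $p_1$, so that the singularity of $T_d^{-1}$ on $\{y=x\}$ ``never enters''. This is not correct: Moser's transversal map $\wt\phi$ is the first return to the box $R$, and to obtain the conjugacy (and the cone conditions/hyperbolicity of $\Lambda$) one needs $\wt\phi$ to be a diffeomorphism onto its image, i.e.\ $\wt\phi^{-1}$ must be $C^1$. But $\wt\phi^{-1}$ involves $T_d^{-1}$ along the entire excursion that leaves $U$, follows the homoclinic loop, and returns---not just the local inverse near $p_1$. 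If some iterate along that excursion lands on $\{y=x\}$ (equivalently, its preimage lies on $\{x+y=0\}$, where $\det DT_d=d(x+y)^{d-1}$ vanishes), then $\wt\phi^{-1}$ is not differentiable and the construction breaks down. You yourself flag this as ``the main obstacle'', but then do not resolve it.

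The paper resolves it as follows. From the proof of Theorem~B one has not just one transversal homoclinic point but an \emph{infinite sequence} $\wh{\bf z}_k,\wt{\bf z}_k$ (and derived points $\wh{\bf w}_k,\wt{\bf w}_k$) accumulating on $q_s,q_u$. The excursion part of the return consists of a \emph{fixed finite number} $2n_0$ of iterates of $T_d$ (from $\wt{\bf w}_k$ to $\wh{\bf z}_k$ and from $\wt{\bf z}_k$ to $\wh{\bf w}_k$); all other iterates stay inside $U$, which is chosen disjoint from $\{y=x\}$. If the orbit segment $\{T_d^j(q_u):0\le j\le 2n_0\}$ avoids $\{y=x\}$, a small enough neighbourhood $U_u$ does too, and one is done (Case~1). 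If it does meet $\{y=x\}$, one uses that the invariant manifolds intersect $\{y=x\}$ only in isolated points (Lemma~\ref{lemma:q1_q2_local}), so for $k$ large the finite orbit segments of the nearby homoclinic points $\wt{\bf w}_k,\wt{\bf z}_k$ avoid $\{y=x\}$, and one builds $R$ around the corresponding ${\bf w}_u$ (Case~2). In either case $\bigcup_{k=1}^{2m_0}T_d^k(\D)\cap\{y=x\}=\emptyset$, so $\wt\phi$ and $\wt\phi^{-1}$ are $C^\infty$ and Moser's Theorem~3.7 applies. This careful placement of the box---exploiting the \emph{family} of transversal homoclinic points produced in Section~\ref{section:proof_B}---is the missing ingredient in your proposal.
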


\begin{figure}[ht]
	\centering
	\includegraphics[width=0.5\textwidth]{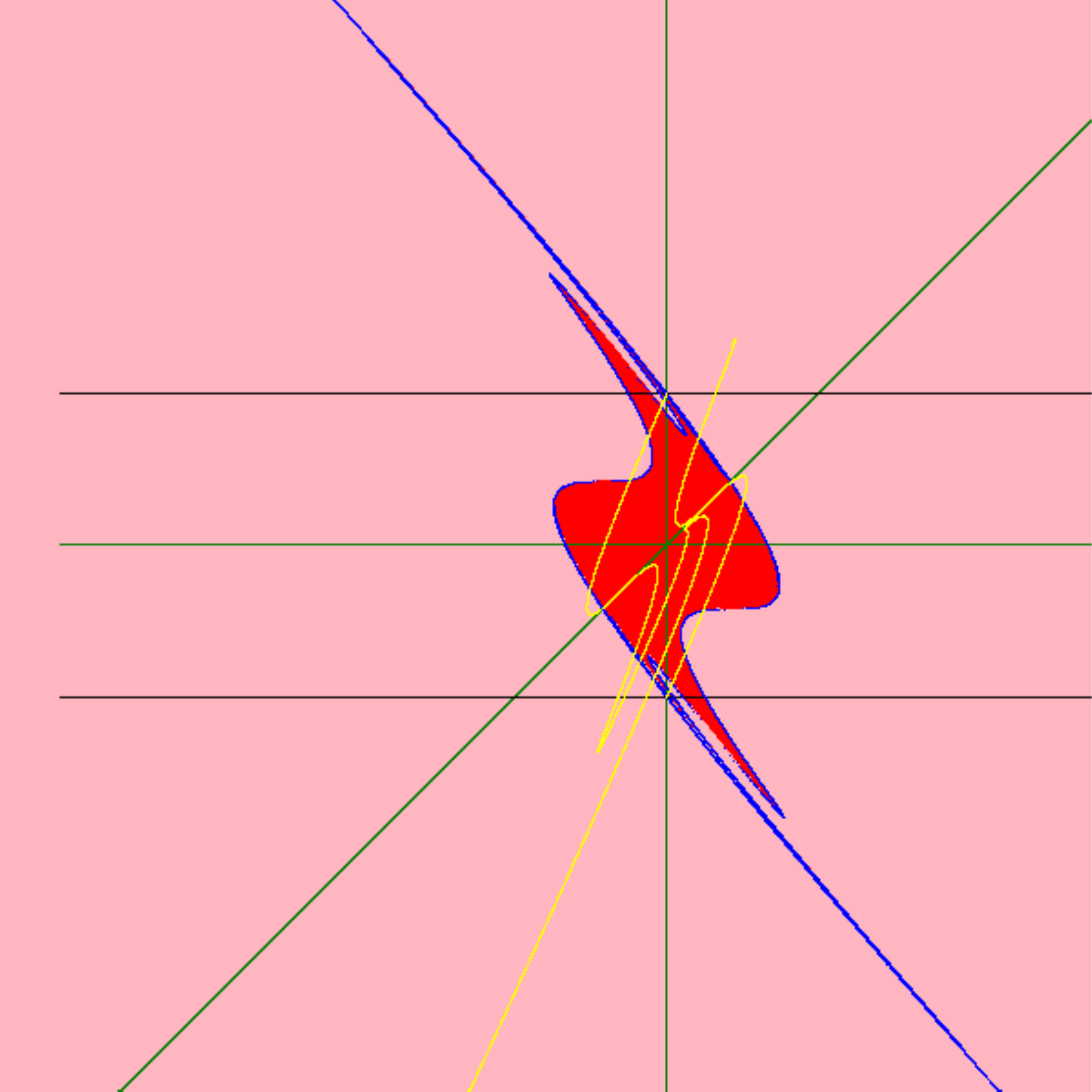}
	\put(-200,150){\tiny $y=1$}
	\put(-200,85){\tiny $y=-1$}
	\put(-84,145){\tiny $p_0$}
	\put(-94,72){\tiny $p_1$}
	\caption{\small{This picture corresponds to $d=3$. In red we plot the attracting basin $\mathcal A_3(0)$. In blue (respectively, yellow) we draw the stable (respectively, unstable) manifold of the two cycle $\{p_0,p_1\}$. The picture illustrate (numerically) the transversal intersections described in Theorem B. According to Theorem C, $\partial \A_{d}(0)$ contains the stable manifold of the two cycle (in blue) and a Cantor-set like with chaotic dynamics.}}       
	\label{fig:intersection_global}
\end{figure}

We emphasize that the theorems provide analytic proofs, rather than numerical  evidence, of  non-local properties of invariant manifolds for a family of maps. There are few cases where this has been done. For instance, in \cite{Fon90}, there is an analytical proof of the transversal intersection of the invariant manifolds for a wide range of a parameter  for a class of maps which include the conservative H\'enon map and the Chirikov standard map.
In \cite{Gel99} there is an analytical proof of the transversal intersection of the invariant manifolds of the standard map when the angle is exponentially small with respect to the parameter of the family. Also, in \cite{DM96} and \cite{MSS11} they prove transversal intersection for the manifolds of close to integrable maps.

We organize the paper as follows. In Section \ref{sect:prelimpaper1} we summarize some preliminaries from \cite{FGJ24} that we need in the proofs of the present paper, trying to make the present paper self-contained.  In Section \ref{sec:homoclinic} we prove Theorem A, in Section \ref{section:proof_B} we prove Theorem B and finally in Section \ref{section:proof_C} we conclude the proof Theorem C.
\section{Preliminaries} \label{sect:prelimpaper1}
	
In this section we collect some preliminary results about the map $T_d:\R^2\to \R^2$, introduced in (\ref{eq:T}), for $d\geq 3$, an odd number. Everything was already introduced in \cite{FGJ24} but, for the sake of completeness and easier reading, we include them here. 

The map $T_d:\R^2\to \R^2$ is a polynomial and a homeomorphism and its inverse map is real analytic in $\R^2 \setminus \{x=y\} $, but not differentiable on the line $\{x=y\}$. Its inverse is  given by
\begin{equation}\label{eq:T_inverse_odd}
		T^{-1}_{d} (x,y) = \left( -2x+y + \left( x-y\right)^{1/d} , \, 2x-y \right).
	\end{equation}
Observe that $T_d^{-1}(x,x)=(-x,x)$  for all $x \in \mathbb R$. One can easily check that $T_d$ has a unique two-cycle  $\{p_0=(0,1), p_1=(0,-1)\}$,  i.e.,  $p_1= T_d(p_0)$ and  $p_0= T_d(p_1)$. This two-cycle will play a fundamental role in the dynamics of $T_d$.  Moreover, we have that 
\begin{equation} \label{eq:dif_matrix_two_cycle}
	DT_d^2(p_0)=DT_d^2(p_1)= DT_d(p_0) DT_d(p_1)
	=
	\left (
	\begin{array}{ll}
		3d^2-2d & \ \ 3d^2-4d+1 \\
		6d^2-2d & \ \ 6d^2-6d+1 
	\end{array}
	\right ).
\end{equation}
A direct computation shows that the 
characteristic equation of $DT_d^2(p_0)$ is 
$$
p(\la)= \la^2 -(1- 8d  + 9 d^2)\la  + d^2 =0
$$
and the
eigenvalues and eigenvectors are given by 
\begin{equation}\label{eq:lambda}
	\lambda^{\pm}_d =  \frac{1}{2}\left ( 9d^2- 8d  + 1 \pm (3d-1)\sqrt{ 9 d^2 -10d + 1} \right) 
\end{equation}
and
\begin{equation}\label{eq:m}
	 (1,m^{\pm}_d) =   \left(1,\ \frac{ 4d}{1-d \pm \sqrt{ 9d^2-10d +  1}} \right),
\end{equation}
respectively. 

On the one hand, it is easy to check that both eigenvalues are strictly positive. Moreover,  
$\lambda^{-}_d$ is strictly decreasing and  $\lambda^+_d$ is strictly increasing, with respect to the parameter $d$. We also have
\begin{center}
\begin{tabular}{ccl}
$ \lim\limits _{d\to \infty}  \lambda^{-}_d=1/9$ &\quad  and &\quad  $1/9<\lambda^{-}_d \le   
\lambda^{-}_3 =  29-8\sqrt{13} \approx 0.1556$ \\
$ \lim\limits_{d\to \infty}  \lambda^{+}_d=\infty$ &\quad  and & \quad 
 $\lambda^{+}_ d \ge \lambda^{+}_ 3 = 29+8\sqrt{13}\approx  57.8444.$
\end{tabular}
\end{center}

On the other hand, $m^{-}_d$  is negative and  strictly  increasing while  $m^+_d$ is positive and  strictly decreasing (both with  respect to the parameter $d$). We also have

\begin{center}
\begin{tabular}{ccl}
$\lim\limits_{d\to \infty} m^{-}_d =-1$ &\quad  and &\quad  
 $-1.3028 \approx \frac{-6}{1+\sqrt{13}} = m^{-}_3  \le m^{-}_d <-1$ \\
 $ \lim\limits _{d\to \infty} m^{+}_d =2$ &\quad  and &\quad  $2<m^{+}_d \le m^{+}_3 = \frac{6}{\sqrt{13}-1} \approx 2.3028$ 
 \end{tabular}
\end{center}

Therefore,  the two cycle  $\{p_0, p_1\}$ is  hyperbolic of  saddle type. 
We denote $W^s_{p_j}$, $ W^u_{p_j}$ the stable and the unstable manifolds of the fixed points $p_j$ for the map $T_d^2$, $j=0,1$. Similarly we denote by $W^s_{\loc,\, p_j}$, $ W^u_{\loc,\, p_j}$ the corresponding local stable and  unstable manifolds of some size $\delta$ that we do not make explicit in the notation. Actually, given some size $\delta>0$, 
$$
W^s_{\loc,\, p_j} = \{ z\in \R^2\mid \, T^{2k}_d (z) \in B_\delta (p_j) \ \ \text{ for all} \ \ k\ge0 \},
$$ 
where $B_{\delta}(p_j)$ denotes the open ball centered at $p_j$ with radius $\delta>0$, for $j=0,1$. We define  analogously  $W^u_{\loc,\, p_j}$ for $T^{-2k}_d$. 

We also denote 
$$
W^s:=W^s_{\{p_0,p_1\}} \quad \mbox{\rm and} \quad W^u:=W^u_{\{p_0,p_1\}}$$ 
the global stable and unstable manifolds of the periodic orbit $\{p_0,p_1\}$, respectively. 
Since $T_d$ is analytic on $\mathbb R^2$ and $T^{-1}_d$ is analytic on $\mathbb R^2 \setminus  \{y=x\}$ the local versions of the invariant manifolds are analytic. Moreover, the (global) unstable  manifold, obtained iterating by $T_d$ the local one, is analytic and the (global) stable manifold, obtained iterating by $T^{-1}_d$, is analytic except at the preimages of the intersections of $W^s$ with $\{y=x\}$.

When there is no confusion we use the simplified notation $\lambda^{\pm} := \lambda^{\pm}_d$ and $m^{\pm} := m^{\pm}_d$.

\subsection*{The triangle $\mathcal D$ and its images: $T_d(\D)$ and $T_d^{-1}(\D)$.}

In \cite[Section 5]{FGJ24} we considered the triangle $\D$ of vertices 
	$$
	p_1=(0,-1), \qquad \left(\frac{1}{ m^+ +1},\frac{-1}{ m^+ +1}\right) \qquad \text {and} \qquad \left(\frac{1}{m^\star+1},\frac{-1}{m^\star+1}\right), 
	$$
	where $m^\star = 7/2$, or equivalently,  
	\[
	\D = \{ (t,-1+mt) \, | \     t \in [0,1/(m+1)], \ m \in [m^+, m^\star]   \}. 
	\]
	We also considered the sets $T_d(\D)$ and $T^{-1}_d(\D)$. 
	We showed that the set $T_d(\D)$ is bounded by the images of the sides of $\D$ given by the curves 
	$\gamma_{m^+}(t)$, $\gamma_{ m^\star}(t)$ where 
	\begin{equation} \label{gamma-m}
		\gamma_{m}(t)
		=  T_d (t,-1+m t) =  
		( mt -1 - ( (m+1)t -1)^d , mt -1 - 2( (m+1)t -1)^d),
	\end{equation}
	for $ 0\le t\le \frac{1}{m+1}$, and 
$$
\partial T_d(\D) \cap \{y=x\}=\Big\{(t,t) \mid \ \frac{-1}{ m^++1} \le  t \le \frac{-1}{m^\star+1}\Big\}.
$$ 
	
Finally, we claim that there is a (connected) piece of $W^u_{p_0} \cap \{y\le 1\}$, tangent to the line $y=1+m^{+}x$ at $p_0$, contained in $T_d(\D)$ joining the point $p_0$ with some point in $\partial T_d(\D) \cap \{y=x\}$. We call left and right boundaries of  $T_d(\D)$ the curves $\gamma_{m^+}(t)$ and $\gamma_{m^\star}(t)$, respectively.  See Figure \ref{fig:D_whD} (left). We do not include here the arguments used in \cite[Lemma 5.4]{FGJ24} to prove the claim but in the next section we mimic, including all computations, the  ideas used in \cite{FGJ24} for the case of  $\wh\D$, $T_d(\wh \D)$ and $T_d^{-1}(\wh\D)$.

\section{Proof of Theorem A}\label{sec:homoclinic}

To prove Theorem A we first show the existence of an heteroclinic intersection for the map $T_d$. More precisely, we have the following statement.

\begin{proposition} \label{prop:hetero}
Let $\{p_0,p_1\}$ be the hyperbolic two-cycle lying in the boundary of $\partial  \mathcal A_{d}(0)$ (see Theorem \ref{theorem:paper1}). Then,  the unstable manifold of $p_0$ and the stable manifold of $p_1$ intersect in a heteroclinic point.
\end{proposition}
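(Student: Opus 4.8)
The plan is to track the two relevant branches of invariant manifolds and show they must cross. From the eigendata in \eqref{eq:lambda}–\eqref{eq:m}, at $p_0=(0,1)$ the relevant branch of $W^u_{p_0}$ (for $T_d^2$) leaves $p_0$ tangent to the line $y=1+m^+x$ with $m^+\in(2,m^+_3]$; by the claim recalled at the end of Section \ref{sect:prelimpaper1}, a connected piece of $W^u_{p_0}\cap\{y\le 1\}$ lies inside $T_d(\D)$ and joins $p_0$ to a point of $\partial T_d(\D)\cap\{y=x\}$, i.e.\ a point $(t_*,t_*)$ with $t_*\in[-1/(m^++1),-1/(m^\star+1)]$. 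First I would do the symmetric construction at $p_1$: exhibit a triangular region $\D$ with vertex $p_1=(0,-1)$ whose sides are controlled segments with slopes in $[m^+,m^\star]$, and show that a connected piece of the local stable manifold $W^s_{\mathrm{loc},p_1}$ — tangent to $y=-1+m^+x$ at $p_1$ — is contained in $\D$ and reaches the edge $\partial\D\cap(\text{something})$. The key geometric point is that $\D$ is designed so that $T_d^{-1}(\D)$ (resp.\ a backward iterate) is again a thin sliver straddling the diagonal $\{y=x\}$, and that $T_d^{-1}$ maps the diagonal to the line $\{y=-x\}$ (since $T_d^{-1}(x,x)=(-x,x)$), so one controls how the stable manifold propagates.

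The next step is the intersection argument proper. Having a piece of $W^u_{p_0}$ running from $p_0$ down to a point $(t_*,t_*)$ on the diagonal inside $T_d(\D)$, and a piece of $W^s_{p_1}$ running from $p_1$ out to some controlled edge, I would compare their positions relative to the diagonal and to the triangle $\D$ (and its image $T_d(\D)$). The point is that $T_d(\D)$ is a region with $p_0$ on one part of its boundary and a diagonal segment on another; the unstable curve enters it at $p_0$ and exits through the diagonal. Meanwhile the stable curve of $p_1$, pulled forward/backward appropriately, must pass through the same region on the "other side". Concretely I would argue by a connectedness/intermediate-value argument: parametrize the unstable arc and the stable arc, show that one endpoint configuration forces the arcs to be on opposite sides of a separating curve while the other endpoint configuration reverses this, so by continuity they meet. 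This is exactly the style of argument used for Theorem \ref{theorem:paper1} in \cite{FGJ24}, now applied to a heteroclinic rather than a boundary statement.

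The main obstacle I expect is the non-smoothness of $T_d^{-1}$ on $\{y=x\}$: the globalized stable manifold is only piecewise analytic, with breaks at preimages of $W^s\cap\{y=x\}$, so I cannot simply invoke analyticity or a standard $\lambda$-lemma to propagate the local stable manifold and must instead do the explicit estimates on $\gamma_m$-type curves (as in \eqref{gamma-m}) to pin down where the stable branch lives after applying $T_d^{-1}$ once or twice. A second, more bookkeeping-level, difficulty is checking that the slope window $[m^+,m^\star]$ with $m^\star=7/2$ is genuinely invariant under the relevant cone/graph-transform estimate uniformly in $d\ge 3$ odd, so that the triangle $\D$ does its job for the whole family; this is the part that requires the careful (but routine) computations with \eqref{gamma-m} that I would carry out in detail but not reproduce here. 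Once the heteroclinic point is obtained, Theorem A follows by the symmetry $p_0\leftrightarrow p_1$ under $T_d$ together with the remark that a heteroclinic point of the $2$-cycle yields a homoclinic point for each $p_j$ as a fixed point of $T_d^2$.
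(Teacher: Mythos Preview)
Your overall architecture (trap each manifold in a thin triangular sliver, then force a topological crossing by an intermediate-value argument) is exactly the paper's strategy. However, the concrete construction you sketch for the stable side is miscalibrated in a way that would make it fail.

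The key error is the eigendirection at $p_1$. The local stable manifold $W^s_{\loc,p_1}$ is tangent to $y=-1+m^-x$ (with $m^-\in(-1.303,-1)$), not to $y=-1+m^+x$. Consequently a triangle at $p_1$ with slopes in $[m^+,m^\star]\subset(2,7/2]$ points in the \emph{unstable} direction and cannot contain any piece of $W^s_{p_1}$; that is precisely the original $\D$, which was built to trap $W^u_{p_0}$ after one application of $T_d$. What the paper does instead is introduce a \emph{second} triangle $\wh\D$ with vertex at $p_0$ and slopes in $[\wh m^\star,m^-]$ where $\wh m^\star=-d/(d-1)$, and then show that $T_d^{-1}(\wh\D)$ (now attached to $p_1$) contains a piece of $W^s_{p_1}$ reaching $\{y=-x\}$. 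The detailed estimates you allude to are carried out for the curves $\wh\Gamma_m=T_d^{-1}(t,1+mt)$ and $\wh\gamma_m=T_d(t,1+mt)$, establishing convexity/concavity and relative positions of the left/right boundaries of $T_d(\wh\D)$ versus $T_d^{-1}(\wh\D)$; this is the analogue of the $\gamma_m$ analysis, but in the stable slope window around $m^-$, not $m^+$.

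For the crossing itself the paper does not compare the two slivers directly near the diagonal. It shows (i) the tangent line $L:\,y=m^-x-1$ to the left boundary of $T_d^{-1}(\wh\D)$ cuts the right boundary $\gamma_{m^\star}$ of $T_d(\D)$ in two points (so $W^u_{p_0}$ must cross $L$), and (ii) at height $y=0$ the set $T_d^{-1}(\wh\D)$ lies to the left of $x=-3/5$ while $T_d(\D)$ lies to the right; combining these gives the sign change that forces $W^u_{p_0}\cap W^s_{p_1}\ne\emptyset$. Your proposal does not yet isolate such a separating line/level, and with the wrong slope window you would not find one.

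Finally, your closing remark that a heteroclinic point for the $2$-cycle ``yields a homoclinic point for each $p_j$'' is not automatic: $q\in W^u_{p_0}\cap W^s_{p_1}$ together with the odd symmetry gives $-q\in W^u_{p_1}\cap W^s_{p_0}$, but to pass from these two heteroclinics to a genuine homoclinic the paper invokes a singular $\lambda$-Lemma (since transversality is not known). This lies outside Proposition~\ref{prop:hetero}, but you should not expect symmetry alone to close that gap.
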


 The idea is to show that $T_d(\D)$ (Figure \ref{fig:D_whD}, left) and $T_d^{-1}(\wh\D)$ ((Figure \ref{fig:D_whD}, right) intersect in a suitable manner that forces the intersection of the invariant manifolds  (Figure \ref{fig:unbounded_global_2}).
Since the proof of this proposition is quite long, we split it into several lemmas.  We assume all notation introduced in Section \ref{sect:prelimpaper1}. In particular we have described the construction provided in  \cite[Lemma 5.4]{FGJ24} to localize the piece of the unstable manifold attached to $p_0$ inside $T_d({\D})$. The first step is to make
 a similar construction to localize a piece of the stable manifold of $p_1$. Let  
\begin{equation}\label{eq:mstar}
-\frac{3}{2}\le \wh m^{\star}=	\wh m_d^{\star}:=-1-\frac{1}{d-1} = \frac{-d}{d-1} < m^{-}<-1, 
\end{equation}
where the inequalities follow from direct computations. We introduce the triangle $\wh{\mathcal D}$ with  vertices
$$
p_0=(0,1), \qquad \left(\frac{-1}{m^- +1},\frac{-1}{m^- +1}\right) \qquad \text {and} \qquad \left(\frac{-1}{\wh m^\star+1},\frac{-1}{\wh m^\star+1}\right),
$$
or equivalently,
\[
\wh \D = \{ (t,1+mt) \, | \ t \in [0,1/(1-m)], \ m \in [\wh{m}^\star, {\b m^-}]\}. 
\]

\begin{figure}[ht]
    \centering
     \includegraphics[width=\textwidth]{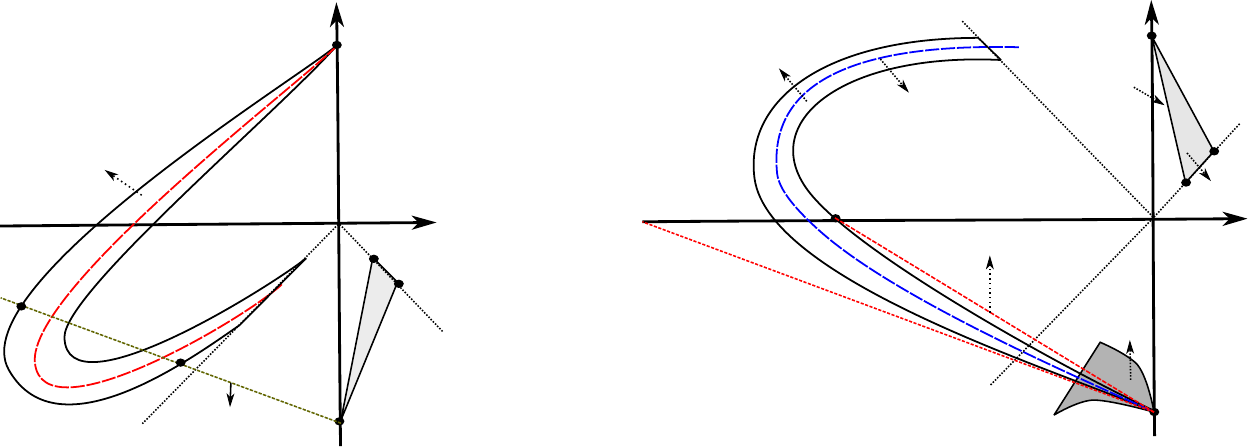}
     \put(-418,100){\tiny $T_d(\D)$}
     \put(-314,44){\tiny $\D$}
     \put(-118,14){\tiny $y=x$}
     \put(-290,35){\tiny $y=x$}
     \put(-92,110){\tiny $y=-x$}
     \put(-420,2){\tiny $y=-x$}
      \put(-390,110){\tiny $\gamma_{m^\star}$}
       \put(-360,100){\tiny $\gamma_{{\b m^{+}}}$}
     \put(-30,145){\tiny $p_0$}
     \put(-30,10){\tiny $p_1$}
     \put(-320,140){\tiny $p_0$}
     \put(-318,8){\tiny $p_1$}
      \put(-20,125){\tiny ${\b m^-}$}
      \put(-50,127){\tiny $\wh{m}^{\star}$}           
      \put(-12,88){\tiny $\wh\D$}
      \put(-200,50){\tiny $y={\b m^-}x-1$}
      \put(-380,8){\tiny $y={\b m^-}x-1$}
      \put(-120,70){\tiny $y=-(2-\wh m^\star)^{1/d}x-1$}
        \put(-150,90){\tiny $x=-(1/(2-m))^{1/d}$}                        
        \put(-59,43){\tiny $T_d(\wh{\D})$}
       \put(-200,136){\tiny $T_d^{-1}(\wh{\D})$}
       \put(-120,120){\tiny $W_{p_1}^{s}$}
           \caption{\small{Left: The triangle $\D$, its image $T_d(\D)$ and (dashed, red) a piece of $W_{p_0}^u$ attached to $p_0$. Right:    The triangle $\wh \D$, its images $T_d(\D)$ and $T_d^{-1}(\wh \D)$, and (dashed, blue) a piece of $W_{p_1}^s$ attached to $p_1$. We also add the relevant objects appearing in the proof of Proposition \ref{prop:hetero} and Theorem A.}}       
    \label{fig:D_whD}
    \end{figure}

As we did with the set $\D$ in  \cite[Lemma 5.3]{FGJ24}, we study the geometry of the sets $T_d(\wh{\mathcal{D}})$ and $T_d^{-1}(\wh {\mathcal D})$. From the properties of these sets,  we will prove that there is a piece of  $W^s_{p_1}\cap \{ y\ge -1\}$ that is contained in $T_d^{-1}(\wh {\mathcal D})$. Moreover, this piece joints $p_1$ with a point in $T_d^{-1}(\wh {\mathcal D}) \cap \{y=-x\}$. See the right picture in Figure \ref{fig:D_whD}. Then, using the geometry of the intersection of  $T_d(\D) $ and $T_d^{-1}(\wh {\mathcal D})$ we will prove that $W^u_{p_0}$ and $W^s_{p_1}$ have to cross (topologically) in a heteroclinic intersection, proving Proposition \ref{prop:hetero}. From this heteroclinic intersection we will obtain a homoclinic intersection as claimed in Theorem A.

\vglue 0.1truecm

\noindent {\bf The preimage  $T_d^{-1}(\wh {\mathcal{D}})$.}  We denote by $\wh{\Gamma}_m(t)$  the image by $T_d^{-1}$ 
of the segment $\{(t,1+mt)\mid t\in [0,1/(1-m)] \}$.  Thus,
\begin{equation}\label{defGammatilde}
\wh{\Gamma}_m(t) =   T_d^{-1}(t,1+mt)= : (\wh  {\alpha}_m(t),\wh  {\beta}_m(t)),
	\end{equation}
where 
\[
\wh  {\alpha}_m(t) = (m-2)t +1 + ( (1-m)t -1)^{1/d}  \quad \mbox{\rm and} \quad  \wh  {\beta}_m(t) = (2-m)t -1.
\]
We are interested in $\wh{\Gamma}_m(t)$ for $m \in  [\wh m^\star,m^-]$. Note that the point on $\wh \D \cap \{y=x\}$ corresponds to $t= 1/(1-m)$ and is mapped by $T_d^{-1} $ to $(-1/(1-m), 1/(1-m))$ on the line $\{y=-x\}$. Taking derivatives we have that
\[
\wh  {\alpha}_m'(t) = m-2 + \frac{1-m}{d} ((1-m)t -1)^{(1-d)/d}, \ \  \wh{\beta}_m'(t) = 2-m >0, \ \ \wh  {\alpha}_m''(t)>0 \ \ \mbox{\rm and} \ \ \wh  {\beta}_m''(t)=0.
\]
A direct computation shows that $\wh  {\alpha}'_m(t)=0$ if and only if $t= t_{\pm}$, where
\[
t_{\pm} = \frac{1}{1-m}\left(1 \pm \left( \frac{1-m}{d(2-m)}\right)^{d/(d-1)}\right)
\]
and 
$0<t_- < \frac{1}{1-m} < t_+$, since, as $m<0$, we have
$0<\frac{1-m}{d(2-m)} < 1$. 
It follows from these computations that $ \wh  {\alpha}_m(t),\ m \in  [\wh m^\star,m^-],$ has a unique minimum (in its domain)  at $t_- \in(0, \frac{1}{1-m})$. Finally,  $\wh  {\alpha}'_m( \frac{1}{1-m})=\infty$
which means that when $\wh{\Gamma}_m(t)$ meets $\{y=-x\}$, its tangent line is horizontal. See the right picture in Figure \ref{fig:D_whD}. 
In other words the vectors $\wh{\Gamma}_m'( \frac{1}{1-m})$ and $\wh{\Gamma}_m'(t_-)$ are parallel to the lines $y=0$ and $x=0$, respectively.

Since $\wh {\beta}_m(t)$ is invertible (linear), for any $m$ we can represent the curve $\wh{\Gamma}_m(t)$ as the graph of a function $x=g(y),\ y\in[-1,1/(1-m)]$ (remember that $1/(1-m)>0$), by taking
$ g(y) = \wh {\alpha}_m \circ \wh {\beta}^{-1}_m(y)$. 
Since $\wh  {\beta}_m''(t)=0 $, we have that 
$$
\frac{dg}{dy}(y)= \left[\frac{d\wh {\alpha}_m}{dt} \left(\frac{d\wh {\beta}_m}{dt}\right)^{-1}\right] \circ \wh {\beta}^{-1}_m(y) \qquad \mbox{\rm and} \qquad 
\frac{d^2g}{dy^2}(y)= \frac{d^2 \wh {\alpha}_m}{dt^2} \left(\frac{d\wh {\beta}_m}{dt}\right)^{-2} \circ \wh {\beta}^{-1}_m(y)>0.
$$
The convexity of $g$  implies that the image of $\wh{\Gamma}_m(t)$ is above its tangent line at $p_1= (0,-1)$. In case $m=  m^-$, this tangent line has slope $m^-$ and it is the minimum slope for all $m\in[\wh{m}^{\star},{m}^-]$. Therefore,   $T_d^{-1}(\wh {\mathcal{D}})$ is above the line $y=m^- x-1$.

Also,  $g$ has a unique minimum at $y_-=\wh \beta_m(t_-)$. Moreover, $\wh{\Gamma}_m(t)$ intersects $\{y=0\}$ when $t= 1/(2-m)$ at the point 
$(x,y)=(\wh {\alpha}_m (1/(2-m))  ,0) $ with 
$$
\wh {\alpha}_m (1/(2-m)) = -\left(\frac{1}{2-m}\right)^{1/d}.
$$
Again, the convexity of the function $g$ implies that  its graph intersected with $\{y\le 0\}$ is below the line
$$
y =- (2-m)^{1/d} x -1
$$ 
and, in particular (see Figure \ref{fig:D_whD}), taking $m=m^\star$ we conclude that $T_d^{-1}(\wh \D) \cap \{y\le 0\}$ is below 
\begin{equation}\label{eq:tangentbelow}
y = -(2-\wh m^\star )^{1/d} x-1=-\left(\frac{3d-2}{d-1}\right)^{1/d}x-1.
\end{equation}

\vglue 0.1truecm

\noindent {\bf The image  $T_d(\wh {\mathcal D})$.} We notice that since $\{p_0,p_1\}$ is a two-cycle we have 
$T_d(p_0)=T_d^{-1}(p_0)=p_1,$
so that  $T_d(\wh {\mathcal D})$ is attached to $p_1$ as it was the case of $T_d^{-1}(\wh {\mathcal D})$.

We denote by $\wh  {\gamma}_m(t)$ the image by $T_d$ of the segment
$\{(t,1+mt)\mid t\in [0,1/(1-m)] \}$,  with $m \in  [\wh m^\star ,m^-] $. Hence, $\wh  {\gamma}_m(t) = T_d(t,1+mt) =:(\wh  {x}_m(t),\wh  {y}_m(t))$ where 
\begin{equation}\label{eq:xmym}
\wh  {x}_m(t) = mt +1 - ( (m+1)t +1)^d , \qquad  \wh  {y}_m(t) = mt +1 - 2( (m+1)t +1)^d.
\end{equation}
To simplify notation, we write $x(t):=  \wh x_m(t)$ and $y(t):=  \wh y_m(t)$ and $\gamma(t)=\wh \gamma_m(t)$ unless it is strictly necessary to show the dependence in $m$. The derivatives are given by
\vglue0.2truecm
\begin{tabular}{ll}
$ x^\prime(t) = m  - d(m+1)( (m+1)t +1)^{d-1}$, & $ y^\prime(t) = m - 2  d(m+1)( (m+1)t +1)^{d-1}$, \\
$x^{\prime\prime}(t) =  - d(d-1)(m+1)^2( (m+1)t +1)^{d-2}$,  &  $y ^{\prime\prime}(t) =  - 2  d(d-1)(m+1)^2( (m+1)t +1)^{d-2}$.
\end{tabular}
\vglue0.2truecm

Since $t\in  [0,1/(1-m)] $ and $m<-1$ we have the inequalities
$$
0 <  \frac{2}{1-m} = \frac{m+1}{1-m} + 1 <(m+1)t +1 \leq 1.
$$
Then, for $d\geq 3$ (odd), we have $${x}''(t)<0 \qquad \text{and} \qquad  {y}''(t) <0.$$ Next lemma provides basic estimates on the parametrization  $\gamma(t)$.


\begin{lemma} \label{fitescomponentsgamma}
	Let $m\in [\wh m^\star, m^-] $ and $t\in[0,1/(1-m)]$. The following conditions hold.
	\begin{itemize}
		\item[(a)] $x(0)=0$,  $x(\frac{1}{1-m}) <  \frac{1}{1-m}[1 -\frac{2}{\sqrt{e}}] <0$, $x(t)<0$ for $t\ne 0$, and $y(t)<0$.
		\item[(b)]  $x' (t) \leq 0$ with $x^{\prime}(t)=0$ if and only if $t=0$ and $m=\wh{m}^{\star}$.
		\item[(c)]  $y'(t) >0$ for $m=m^{\star}$.
	\end{itemize}
\end{lemma}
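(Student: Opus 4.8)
The plan is to regard the lemma as a short list of one‑variable estimates on the explicit functions $x(t)=mt+1-u^{d}$ and $y(t)=mt+1-2u^{d}$, where I abbreviate $u=u(t):=(m+1)t+1$ (note $y(t)=x(t)-u^{d}$). First I would record the facts to be used repeatedly: for $m\in[\wh m^\star,m^-]$ and $t\in[0,1/(1-m)]$ one has $0<\tfrac{2}{1-m}<u(t)\le 1$ and $m<-1$ (so $m+1<0$), and $x''(t)<0$, $y''(t)<0$ throughout — all already established in the excerpt. In particular $x'$ and $y'$ are strictly decreasing in $t$, which drives parts (b) and (c).

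I would do (b) first, since the third assertion of (a) uses it. As $x'$ is strictly decreasing, $x'(t)\le x'(0)=m-d(m+1)=-(d-1)m-d$ for all $t$, with equality only at $t=0$. Dividing by $-(d-1)<0$, the bound $-(d-1)m-d\le 0$ is equivalent to $m\ge-\tfrac{d}{d-1}=\wh m^\star$, which is the hypothesis, and is an equality exactly when $m=\wh m^\star$. Hence $x'(t)\le 0$ always, and $x'(t)=0$ forces $t=0$ and $m=\wh m^\star$; the converse is clear. This gives (b).

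For (a): $x(0)=1-1=0$ is immediate, and $x(t)<0$ for $t\ne 0$ follows from $x(0)=0$ and $x'<0$ on $(0,1/(1-m)]$ (from (b)). At the right endpoint $u=\tfrac{2}{1-m}$, so $x\bigl(\tfrac{1}{1-m}\bigr)=\tfrac{1}{1-m}\bigl(1-\tfrac{2^{d}}{(1-m)^{d-1}}\bigr)$; since $\tfrac{1}{1-m}>0$, the claimed bound is equivalent to $\bigl(\tfrac{2}{1-m}\bigr)^{d-1}>e^{-1/2}$, i.e. $(d-1)\ln\tfrac{1-m}{2}<\tfrac12$. I would obtain this from $m\ge\wh m^\star$, which gives $\tfrac{1-m}{2}\le 1+\tfrac{1}{2(d-1)}$, together with the strict inequality $\ln(1+s)<s$ at $s=\tfrac{1}{2(d-1)}$. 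As $1-\tfrac{2}{\sqrt e}<0$, this also yields $x(\tfrac{1}{1-m})<0$. Finally $y(t)=x(t)-u(t)^{d}<0$ because $x(t)\le 0$ and $u(t)>0$.

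For (c) — where I read the $m^\star$ of the statement as $\wh m^\star$, the only slope compatible with the hypothesis — at $m=\wh m^\star$ we have $m+1=-\tfrac{1}{d-1}$, so $y'(t)=\tfrac{d}{d-1}\bigl(2u(t)^{d-1}-1\bigr)$ and the claim reduces to $u(t)^{d-1}>\tfrac12$. Since $u$ decreases in $t$, it suffices to check the right endpoint, where $u=\tfrac{2(d-1)}{2d-1}=1-\tfrac{1}{2d-1}$; there the elementary bound $\ln(1-s)>-\tfrac{s}{1-s}$ at $s=\tfrac{1}{2d-1}$ gives $(d-1)\ln\bigl(1-\tfrac{1}{2d-1}\bigr)>-\tfrac12>-\ln 2$, hence $\bigl(1-\tfrac{1}{2d-1}\bigr)^{d-1}>\tfrac12$. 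The whole argument is elementary; the only point needing care is pinning down the sharp constants $2/\sqrt e$ in (a) and $\tfrac12$ in (c), both of which arise from substituting the extreme slope $\wh m^\star=-d/(d-1)$ into the endpoint value $u=2/(1-m)$ and comparing quantities of the form $\bigl(1\pm\tfrac{c}{d-1}\bigr)^{d-1}$ with powers of $e$.
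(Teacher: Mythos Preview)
Your proof is correct and follows essentially the same approach as the paper: both argue (b) via $x''<0$ so that $x'(t)\le x'(0)=m-d(m+1)$, and then check $m-d(m+1)\le 0$ is equivalent to $m\ge\wh m^\star$; both deduce $x(t)<0$ from (b) and get $y(t)<0$ from $y=x-u^d$; and both obtain the constants $2/\sqrt e$ in (a) and the positivity in (c) by evaluating $u=2/(1-m)$ at the extremal slope $m=\wh m^\star$ and comparing $\bigl(1+\tfrac{1}{2(d-1)}\bigr)^{d-1}$ with $\sqrt e$. Your use of the logarithmic inequalities $\ln(1+s)<s$ and $\ln(1-s)>-s/(1-s)$ is just a repackaging of the paper's use of $(1+1/n)^n<e$, and your reading of $m^\star$ as $\wh m^\star$ in part (c) is the intended one.
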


\begin{proof}
The proof of the items follows from some computations based on the expressions of $x(t)$, $y(t)$ and their derivatives above.

Easily ${\b x}(0)=0$. On the one hand, we have
\begin{equation*}
\begin{split}
 x\Big(\frac{1}{1-m}\Big) = \frac{1}{1-m} - \Big(\frac{2}{1-m}\Big)^d 
	&= \frac{1}{1-m}\Big[1-2\Big[\Big(1+ \frac{1}{2(d-1)}\Big)^{2(d-1)}\Big]^{-1/2}  \Big] \\
&<  \frac{1}{1-m}\Big[1 -\frac{2}{\sqrt{e}}\Big] <0.
\end{split}
\end{equation*}

On the other hand, $x^{\prime}(0) = m-d(m+1)\le (-1-\frac{1}{d-1} ) (1-d)  -d =0$ where the equality only holds for $m=\wh{m}^{\star}$ and $x^{\prime\prime}(t)<0$ (see \eqref{eq:mstar}). Hence $x^{\prime}(t) <0$ (unless  $t=0$ and $m=\wh{m}^{\star}$ where $x_{m^\star}^{\prime}(0)=0$) and so $x(t)$ is decreasing (and negative unless $t=0$). Finally, we have $y(t) = x(t) - ( (m+1)t +1)^d <0$. All together implies (a) and (b).

If $m=\wh m^\star$, using \eqref{eq:mstar} we have
\begin{align*}
 {y}'_{m^{\star}}(t) &  = m^{\star} - 2  d(m^{\star}+1)( (m^{\star}+1)t +1)^{d-1} \ge \frac{d}{d-1} \Big[-1+ 2\Big(\frac{-1}{d-1} t + 1\Big)^{d-1}\Big]		\\
 & \ge \frac{d}{d-1} \Big[-1+ 2\Big(1 - \frac{1}{2d-1}\Big)^{d-1}\Big] = \frac{d}{d-1} \Big[-1+ 2\Big(1+ \frac{1}{2(d-1)}\Big)^{1-d}\Big]
 \\ & >   \frac{d}{d-1} \Big[-1+ \frac{2}{\sqrt{e}}\Big] >0.
\end{align*}
that proves  (c).
\end{proof}

Since $x'(t)<0$, the function $x(t)$ is invertible. 
If $t=t(x)$ is the inverse map of $x(t)$, then (the image of) $\gamma(t)$ can be represented as the graph of the function $h(x):=h_m(x):= y\circ t(x)$. From its definition the function $h$ is smooth.
\begin{lemma}
We have that $h(x)=y \circ t (x)$ is concave.
\end{lemma}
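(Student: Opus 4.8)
The plan is to show concavity of $h$ by relating it to the concavity properties of the two component functions $x(t)$ and $y(t)$, exactly as was done for the curve $\wh{\Gamma}_m$ (where convexity of $g$ followed from $\wh\alpha_m''>0$, $\wh\beta_m''=0$). Here, however, neither component is linear, so the bookkeeping is slightly more involved. First I would write $y = h(x)$ via $x = x(t)$, $y = y(t)$, so that by the chain rule
\[
h'(x) = \frac{y'(t)}{x'(t)}, \qquad
h''(x) = \frac{y''(t)\,x'(t) - y'(t)\,x''(t)}{x'(t)^3},
\]
with all derivatives in $t$ evaluated at $t = t(x)$. Since Lemma \ref{fitescomponentsgamma}(b) gives $x'(t)\le 0$, the denominator $x'(t)^3$ is negative (at $t=0$, $m=\wh m^\star$ one must argue separately, or simply restrict to the open parameter/parameter-interior and pass to the limit), so it suffices to prove that the numerator $N(t) := y''(t)x'(t) - y'(t)x''(t)$ is nonnegative.

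The key computational step is then to evaluate $N(t)$ using the explicit formulas for $x',y',x'',y''$ given just before the lemma. Writing $u := ((m+1)t+1)^{d-1} > 0$ and $A := d(m+1)$, $B := d(d-1)(m+1)^2 = A(d-1)(m+1)$, one has $x' = m - Au$, $y' = m - 2Au$, $x'' = -B\,((m+1)t+1)^{-1}u = -Bv$ and $y'' = -2Bv$ where $v := ((m+1)t+1)^{d-2}>0$. Substituting,
\[
N(t) = (-2Bv)(m - Au) - (m-2Au)(-Bv) = Bv\bigl[-2m + 2Au + m - 2Au\bigr] = Bv\,(-m) = -mBv.
\]
Since $m<0$, $B>0$ and $v>0$, we get $N(t) > 0$, hence $h''(x) = N(t)/x'(t)^3 < 0$ on the interior, i.e. $h$ is concave.

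The only genuine subtlety — and the step I would be most careful about — is the behaviour at the endpoint $t=0$ when $m=\wh m^\star$, where $x'(0)=0$ and the formula for $h''$ blows up; this is the point $p_1$ where the curve $\gamma$ is tangent to a vertical direction in the $(x,y)$-chart is \emph{not} quite the right picture, so one should check that $h$ is still well-defined and smooth as claimed (this was already asserted: "From its definition the function $h$ is smooth"), and then conclude concavity on the closed interval by continuity of $h''$ from the interior, or by noting that a function smooth on a closed interval with $h''<0$ on the interior is concave there. Alternatively, and perhaps more cleanly, I would simply observe that concavity is a statement about the graph being below its chords, and since $x(t)$ is a homeomorphism onto its image, concavity of $h$ is equivalent to $y$ being a concave function of $x$ along the curve, which follows from the sign of $N(t)$ computed above together with $x'\le 0$ without ever dividing by zero. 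I expect no real obstacle beyond this endpoint care; the heart of the matter is the clean cancellation $N(t) = -mBv > 0$.
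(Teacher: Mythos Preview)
Your proof is correct and follows essentially the same approach as the paper: both compute $h''$ via the chain rule and show it is negative using the explicit formulas for $x',y',x'',y''$. The paper organizes the algebra slightly differently, factoring as $h''=\frac{x''}{(x')^{2}}\bigl(2-\tfrac{y'}{x'}\bigr)$ and showing the bracket exceeds $1$, which amounts to your identity $N(t)=mx''=-mBv>0$; your treatment of the endpoint $t=0$, $m=\wh m^\star$ is in fact more careful than the paper's.
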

\begin{proof} Taking derivatives we have  
\[
h^{\prime}(x) =  \frac{y^{\prime}}{x^{\prime}}\circ t(x)\qquad \text {and} \qquad 	h^{\prime\prime}(x) 
=\frac{1}{(x^{\prime})^2}[y^{\prime\prime}  -  x^{\prime\prime} \frac{y^{\prime}}{x^{\prime}} ]\circ t(x). 
\]

From the expressions of $x$ and $y$  and their derivatives (see \eqref{eq:xmym} and the derivatives below) we have ${y}^{\prime}(t) =   {x}^{\prime}(t)- d(m+1)( (m+1)t +1)^{d-1}$ and $  {y}^{\prime\prime}(t) = 2   {x}^{\prime\prime}(t) $. Therefore
\[
h^{\prime\prime}(x) =\frac{x^{\prime\prime}}{(x^{\prime})^2}[2  -  \frac{y^{\prime}}{x^{\prime}} ]\circ t(x) \qquad \text {and} \qquad 
2-\frac{y'}{x'} = 1+ \frac{d(m+1)( (m+1)t +1)^{d-1}}{x'} >1,
\]
concluding $h^{\prime\prime}(x) <0$ and hence $h$ is concave (remember that the result is valid for all values of $m$ in the range). 	
\end{proof}

Now we fix $m=\wh{m}^\star$. We claim that (the image of) $\wh \gamma _{\wh m^\star}$ belongs to $\{x\le 0,y\le 0 \}$ and it is above $T^{-1}_d(\wh \D) $. To check the claim, accordingly to the previous study of $T^{-1}_d(\wh \D)$  it is sufficient to check that $\wh \gamma _{\wh m^\star}$ is above the line $y= -(2-\wh m^\star)^{1/d} x-1$ introduced in \eqref{eq:tangentbelow}. Moreover, since (the image of)
$\wh \gamma _{\wh m^\star}$ is the graph of a concave function it is enough to check that
$$
y_{\wh{m}^{\star}}\left(\frac{1}{1-\wh{m}^{\star}}\right) > -(2-\wh{m}^{\star})^{1/d} \ x_{\wh{m}^{\star}}\left(\frac{1}{1-\wh{m}^{\star}}\right)-1.
$$
This is equivalent to 
\[ 
\frac{1}{1-\wh m^\star} - 2 \Big(\frac{2}{1-\wh m^\star}\Big)^d +  (2-\wh m^\star)^{1/d} \Big[  \frac{1}{1-\wh m^\star} - \Big(\frac{2}{1-\wh m^\star}\Big)^d \Big] +1>0.
\] 
If we substitute $\wh m^\star = -1-\frac{1}{d-1}$, the above inequality writes as 
\begin{equation} \label{desigualtat-perawhD}
	1 -  \Big(\frac{2}{2+1/(d-1)}\Big)^d + \Big[\frac{1}{2+1/(d-1)} - \Big(\frac{2}{2+1/(d-1)}\Big) ^d \Big]\Big[1+\Big(3+ \frac{1}{d-1}\Big)^{1/d} \Big]>0. 
\end{equation}
On the one hand we have that
\[
\Big(\frac{2}{2+1/(d-1)}\Big)^d  < \Big(\frac{1}{1+1/(2d)}\Big)^d \le \Big(\frac67 \Big)^3.
\]
On the other hand we have that
\begin{align*}
	\frac{1}{2+1/(d-1)} - \Big(\frac{2}{2+1/(d-1)}\Big) ^d & = 
	\frac{1}{2+1/(d-1)} \Big[1-2 \Big[ \Big(1+\frac{1}{2(d-1)} \Big)^{2(d-1)} \Big]^{-1/2} \Big]\\ 
	& \ge
	\frac{1}{2+1/(d-1)} \Big[1-2\Big(1+\frac{1}{4}\Big)^{-2}\Big] \ge \frac{-7}{50}, 
\end{align*}
and 
\[
0<1+\Big(3+ \frac{1}{d-1}\Big)^{1/d} < 1+\Big(3+ \frac{1}{2}\Big)^{1/3} = 1+ (7/2)^{1/3}.
\]
Hence, to prove \eqref{desigualtat-perawhD} it is enough to check that
$$
1-\left(\frac{6}{7}\right)^3-\frac{7}{50}\left(1+\left(\frac{7}{2}\right)^{1/3}\right) \approx  0.02 >0.
$$

Moreover, we also claim that $\wh \gamma _{m^-}$ is below $T^{-1}_d(\wh \D) $. This easily follows from the fact that,  by the description of the 
preimage $T^{-1}_d(\wh \D) $, the left boundary of $T^{-1}_d(\wh \D) $ is the graph of a convex function $x=g(y)$ while  $\wh \gamma _{m^-}$
is the graph of a concave function $y=h_{m^-}(x)$ and both graphs are tangent at $p_1$.
 
It follows from lemmas above that we have a deep control on the {\it left} and {\it right} boundaries of $T_d(\wh \D)$, and their relative position with respect to the set $T_d^{-1}(\wh \D)$. See the right picture of Figure  \ref{fig:D_whD}. Now we close the argument by controlling the image of $\partial\wh \D \cap \{y=x\}$.

\begin{lemma} \label{lem:tapadedalt}
The upper piece of the boundary of $T_d(\wh \D) $ is the 
image by $T_d$ of the piece of the boundary 
$
\big\{ (t,t) \mid \frac{1}{1-\wh m^\star}  \le t \le \frac{1}{1-m^-}   \big\}
$
of $\wh \D$. It can be represented as the graph of an increasing function and is contained in $\{ 
x<0, \, y<0\}$. 
\end{lemma}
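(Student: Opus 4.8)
The plan is to read the statement off the fact that $T_d$ is a homeomorphism of $\R^2$. Then $T_d(\wh\D)$ is a Jordan domain with $\partial T_d(\wh\D)=T_d(\partial\wh\D)$, the union of the images of the three sides of the triangle $\wh\D$. Two of those sides are the segments issuing from $p_0$ of slopes $\wh m^\star$ and $m^-$, whose images $\wh\gamma_{\wh m^\star}$ and $\wh\gamma_{m^-}$ have already been shown (via the concavity lemmas above) to be the two side boundaries of $T_d(\wh\D)$. The remaining side is the hypotenuse $\wh\D\cap\{y=x\}=\{(t,t)\mid \tfrac1{1-\wh m^\star}\le t\le\tfrac1{1-m^-}\}$, so the ``upper'' piece of $\partial T_d(\wh\D)$ is, by elimination, exactly the arc $t\mapsto T_d(t,t)$ on that interval. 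Writing $T_d(t,t)=(x(t),y(t))$ with $x(t)=t-(2t)^d$ and $y(t)=t-2(2t)^d$, what remains is to show this arc is the graph of an increasing function and lies in $\{x<0,\,y<0\}$.

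Both of those reduce to one estimate: $(2t)^{d-1}>\tfrac12$ over the whole parameter range. Indeed, along the hypotenuse $2t=\tfrac2{1-m}\ge\tfrac2{1-\wh m^\star}=\tfrac{2(d-1)}{2d-1}=1-\tfrac1{2d-1}$, so, with $n:=2d-1\ge5$ and the classical bound $(1+\tfrac1{n-1})^{n-1}<e$,
\[
(2t)^{d-1}\ \ge\ \Bigl(1-\tfrac1n\Bigr)^{d-1}=\Bigl(\tfrac{n-1}{n}\Bigr)^{(n-1)/2}=\bigl[(1+\tfrac1{n-1})^{\,n-1}\bigr]^{-1/2}\ >\ e^{-1/2}\ >\ \tfrac12 .
\]
Put $s:=(2t)^{d-1}>\tfrac12$. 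Then $x(t)=t-(2t)^d=t(1-2s)<0$ (as $t>0$), and $y(t)=x(t)-(2t)^d<x(t)<0$; this is the containment in $\{x<0,\,y<0\}$. Moreover $x'(t)=1-2d\,(2t)^{d-1}=1-2ds<1-d<0$, so $t\mapsto x(t)$ is strictly decreasing and hence invertible on the range, and along the arc
\[
\frac{dy}{dx}=\frac{y'(t)}{x'(t)}=\frac{1-4ds}{1-2ds}=\frac{4ds-1}{2ds-1}=2+\frac1{2ds-1}>2>0,
\]
so the arc is the graph of a strictly increasing function of $x$ (in fact a convex one, since $\tfrac{dy}{dx}$ is decreasing in $t$ while $x$ is decreasing in $t$).

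The one step that genuinely uses the earlier work --- and the point I expect to be delicate --- is the claim that this arc is really the \emph{upper} boundary of $T_d(\wh\D)$, not merely its third side. For that I would combine the homeomorphism picture with the shape data already established: $\wh\gamma_{\wh m^\star}$ and $\wh\gamma_{m^-}$ are concave graphs through $p_1=T_d(p_0)=(0,-1)$, one lying above and the other below $T_d^{-1}(\wh\D)$; and the hypotenuse endpoint $t=\tfrac1{1-m^-}$ maps to the point with the smaller $x$-coordinate (since $x(t)$ is strictly decreasing and $\tfrac1{1-\wh m^\star}<\tfrac1{1-m^-}$), both images lying in $\{x<0\}$ by the previous paragraph. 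These facts fix the cyclic order of the three boundary arcs around $\partial T_d(\wh\D)$ and force the arc $t\mapsto T_d(t,t)$ --- the unique one not incident to $p_1$ --- to bound $T_d(\wh\D)$ from above, with $\wh\gamma_{m^-}$ bounding it from below. Once the uniform bound $(2t)^{d-1}>\tfrac12$ (valid for all odd $d\ge3$) is in hand, everything else is routine bookkeeping.
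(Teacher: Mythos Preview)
Your proof is correct and follows essentially the same route as the paper: parametrize the arc by $T_d(t,t)=(t-(2t)^d,\,t-2(2t)^d)$, use the bound $(2t)^{d-1}>e^{-1/2}$ at the left endpoint $t=\tfrac{1}{1-\wh m^\star}$ (via $(1+\tfrac{1}{2(d-1)})^{d-1}<\sqrt{e}$), and read off $x'(t)<0$, $x(t)<0$, $y(t)<0$, and $dy/dx>0$. Your presentation is slightly tidier in that you package everything through the single inequality $s=(2t)^{d-1}>1/2$ directly on the whole parameter range, whereas the paper checks $\xi'$ at the left endpoint and then invokes $\xi''<0$; but the content is the same. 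Your extra paragraph justifying the word ``upper'' via the homeomorphism and the cyclic order of the three boundary arcs is a reasonable addition---the paper's proof itself only establishes the analytic claims (graph of an increasing function, containment in $\{x<0,y<0\}$) and leaves the ``upper'' designation to the surrounding discussion and Figure~\ref{fig:D_whD}.
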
	
\begin{proof}
We introduce
	\[
	T_d(t,t) = (t-(2t)^d, t-2(2t)^d )=:  (\xi(t), \eta(t))  ,\qquad  (1-\wh m^\star)^{-1} \le t \le (1-m^-)^{-1}.
	\] 
	Taking first and second derivatives we have 
$$
\xi'(t)  = 1-2d (2t)^{d-1}, \quad   \eta'(t)   =  1-4d (2t)^{d-1}, \quad \xi''(t)  = -4d(d-1) (2t)^{d-2} \ \  \mbox{\rm and} \  \ \eta''(t)  =  2 \xi''(t).
$$
First we check that, in the corresponding domain, $	\xi'(t)<0$ and $	\eta'(t)<0$. This follows from 
$	\xi''(t)<0, \ \eta''(t)<0$ and  
\begin{align*}
	\xi'\Big(\frac{1}{1-\wh m^\star}\Big)  
	& = 1-2d  \Big(\frac{2}{1-\wh m^\star}\Big)^{d-1} 
	= 1-2d\frac{1}{\Big(1+ \frac{1}{2(d-1)}\Big)^{d-1}} <1-\frac{2d}{\sqrt{e}} <0,
\end{align*}	
and 
\[		\eta'\Big(\frac{1}{1-\wh m^\star}\Big)  
= 	\xi'\Big(\frac{1}{1-\wh m^\star}\Big)  - 2d  \Big(\frac{2}{1-\wh m^\star}\Big)^{d-1}  <0 .
\]
The condition $\xi'(t)<0$ implies that $\xi(t)$ is invertible. Let $t=t(\xi)$ be its inverse function
and
$\eta = f(\xi) := \eta \circ t(\xi)$. The curve $T_d(t,t)$ is the graph of $f$ and 
\[
f^{\prime}=   \frac{\eta^{\prime}}{\xi^{\prime}} \circ t(x)>0.
\]
Moreover, since 
\begin{align*}
		\xi\Big(\frac{1}{1-\wh m^\star}\Big) 
		& = \frac{1}{1-\wh m^\star}\left(1-2\left(\frac{2}{1-\wh m^\star}\right)^{d-1}\right) = \frac{1}{1-\wh m^\star}\left(1-2\frac{1}{(1+ \frac{1}{2(d-1)})^{d-1}}\right) \\
		& < \frac{1}{1-\wh m^\star}\Big(1-\frac{2}{\sqrt{e}} \Big) <0,
	\end{align*}
we have $\xi(t)<0 $ and $\eta (t) = \xi(t) -(2t)^d <0 $.
\end{proof} 

Up to this point we have completed the study of the geometry and relative positions of $T_d(\wh\D)$ and $T_d^{-1}(\wh\D)$ (see the right picture of Figure \ref{fig:D_whD}). Next two lemmas show that there is a piece of $W_{p_1}^s$ attached to $p_1$, being tangent to $y=  m^{-} x-1$ at $p_1$, included in $T_d^{-1}(\wh \D)$ and connecting $p_1$ with a point in $\partial T_d^{-1}(\wh \D) \cap \{y=-x\}, \ x<0$. 

Let $(x_0,y_0) \in T_d^{-1}(\wh \D)$. Then we write $(x_{2k},y_{2k}):=T_d^{2k}(x_0,y_0)$. The first lemma characterize the dynamics of points in $T_d^{-1}(\wh \D)$ whose all iterates under $T_d^{2}$ remain in $T_d^{-1}(\wh \D)$.

\begin{lemma} \label{lem:convergencia-a-p1}
If $(x_0,y_0) \in T_d^{-1}(\wh \D)$  and $(x_{2k},y_{2k}) \in T_d^{-1}(\wh \D)$  
for all $k\ge 0$ then we have that $(x_{2k},y_{2k}) \to p_1=(0,-1)$ as $k \to \infty$.
\end{lemma}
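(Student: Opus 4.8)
The plan is to reduce the statement to a standard attraction argument for $T_d^2$ near the hyperbolic fixed point $p_1$, exploiting that the region $T_d^{-1}(\wh{\mathcal D})$ is, by construction, squeezed between the line $y = m^- x - 1$ (which is the tangent to $W^s_{\loc,p_1}$ at $p_1$, since the slope of the stable eigenvector is $m^-$) and the lower barrier \eqref{eq:tangentbelow}, with $p_1$ being the unique point of $T_d^{-1}(\wh{\mathcal D})$ on the line $\{y = -x\}$ side touching $p_1$. First I would observe that the orbit $(x_{2k}, y_{2k})_{k\ge 0}$ is, by hypothesis, an infinite sequence confined to the compact set $\ol{T_d^{-1}(\wh{\mathcal D})}$, so it has at least one accumulation point; call it $q$. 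The goal is to show $q = p_1$.

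Next I would argue that any such accumulation point $q$ must lie on $W^s_{p_1}$. The key dynamical input is hyperbolicity of $p_1$ as a fixed point of $T_d^2$: the eigenvalues are $\lambda^- \approx 0.15 < 1 < \lambda^+$, with eigendirections $(1,m^-)$ (stable) and $(1,m^+)$ (unstable). The cone opening of $T_d^{-1}(\wh{\mathcal D})$ at $p_1$ is bounded by the slopes $m^-$ and $-(2-\wh m^\star)^{1/d}$; I would check that this cone is contained in the stable cone of the hyperbolic splitting at $p_1$ for a suitable neighbourhood, i.e. that $T_d^2$ maps a small wedge of $T_d^{-1}(\wh{\mathcal D})$ at $p_1$ strictly into itself with contraction. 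Concretely, one shows that the forward iterates $(x_{2k}, y_{2k})$ cannot stay away from $p_1$: if $\operatorname{dist}((x_{2k},y_{2k}), p_1) \ge \epsilon$ for all $k$, then since the only way to remain in $T_d^{-1}(\wh{\mathcal D})$ forever under $T_d^2$ is to be on the local stable manifold (points off $W^s_{\loc,p_1}$ have an unstable component that is expanded by $\lambda^+$ and eventually leave any bounded region through the unstable direction, which here points out of $T_d^{-1}(\wh{\mathcal D})$), one gets a contradiction. This is essentially the local stable manifold characterisation combined with the fact, established via the geometry of $T_d(\wh{\mathcal D})$ versus $T_d^{-1}(\wh{\mathcal D})$ in the preceding lemmas, that $T_d^{-1}(\wh{\mathcal D})$ contains no $T_d^2$-invariant subset other than (a piece of) $W^s_{p_1}$.

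I expect the main obstacle to be making rigorous the claim that staying in $T_d^{-1}(\wh{\mathcal D})$ for all forward even iterates forces convergence to $p_1$, rather than, say, accumulation on some other invariant structure inside the region. The clean way to handle this is a Lyapunov-type / graph-transform argument: parametrise $T_d^{-1}(\wh{\mathcal D})$ by the linear coordinate $\wh\beta_m$ (the $y$-coordinate is an affine function of $t$ along each edge), express $T_d^2$ restricted to the region in the $(u,s)$ coordinates adapted to the eigendirections at $p_1$, and show the unstable coordinate $u$ satisfies $|u_{2(k+1)}| \ge \mu |u_{2k}|$ for some $\mu > 1$ as long as the orbit stays in the region with $|u| > 0$; since the region is bounded this forces $u_0 = 0$, i.e. the orbit lies on $W^s_{p_1}$, and then the contraction by $\lambda^- < 1$ in the $s$-coordinate yields $(x_{2k},y_{2k}) \to p_1$. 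The verification that the nonlinear terms of $T_d^2$ do not destroy this expansion estimate on the (bounded but not a priori small) region $T_d^{-1}(\wh{\mathcal D})$ is the technical heart; here one uses the explicit bounds on $\wh\alpha_m, \wh\beta_m$ and their derivatives already assembled above, together with the fact that the region was chosen precisely so that its boundary curves ($\wh\Gamma_{m^-}$ tangent to the stable line, $\wh\Gamma_{\wh m^\star}$ bounded below by \eqref{eq:tangentbelow}) are mapped correctly. If a fully global estimate is awkward, an alternative is to first show $T_d^2(T_d^{-1}(\wh{\mathcal D})) \cap T_d^{-1}(\wh{\mathcal D})$ is a strictly smaller sub-triangle still attached to $p_1$, iterate to get a nested sequence of such triangles shrinking to $p_1$, and conclude that any orbit remaining in the region for all $k$ lies in the intersection $\bigcap_k T_d^{2k}(T_d^{-1}(\wh{\mathcal D})) = \{p_1\}$ eventually — again yielding the claim.
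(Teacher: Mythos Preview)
Your proposal sketches a reasonable hyperbolic-dynamics strategy, but it leaves the essential step unproved and differs substantially from what the paper actually does.

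The paper's argument is completely elementary and never invokes cones, adapted coordinates, or the local stable manifold. It tracks one application of $T_d$ at a time: from $(x_0,y_0)\in T_d^{-1}(\wh\D)$ (with $y_0<0$) to $(x_1,y_1)\in\wh\D$, and then to $(x_2,y_2)\in T_d(\wh\D)$. Using only the barrier \eqref{eq:tangentbelow}, the observation that $(x_1,y_1)$ lies on the line $y=2x-y_0$, and the bounds $y_1\ge \wh m^\star x_1+1$, $0<x_1+y_1\le 1$ inside $\wh\D$, the paper obtains the explicit inequality
\[
|x_2|\le |\wh m^\star|\cdot\frac{(2-\wh m^\star)^{1/d}}{2-m^-}\,|x_0|\le \tfrac{3}{2}\cdot\tfrac{1}{3}\Big(\tfrac{7}{2}\Big)^{1/3}|x_0|\le \tfrac{4}{5}|x_0|.
\]
Iterating gives $|x_{2k}|\le(4/5)^k|x_0|\to 0$, and the shape of $T_d^{-1}(\wh\D)$ then forces $y_{2k}\to -1$. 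No hyperbolic machinery is used, and the contraction is in the original $x$-coordinate, not in an unstable coordinate.

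Your plan, by contrast, defers the whole difficulty to ``the verification that the nonlinear terms of $T_d^2$ do not destroy this expansion estimate on the (bounded but not a priori small) region''. That verification \emph{is} the lemma: $T_d^{-1}(\wh\D)$ reaches from $p_1$ all the way to $\{y=-x\}$, so no local argument near $p_1$ suffices, and you give no concrete estimate replacing it. Your alternative route also fails as stated: $T_d^2(T_d^{-1}(\wh\D))=T_d(\wh\D)$, and the preceding lemmas establish precisely that the right boundary $\wh\gamma_{\wh m^\star}$ of $T_d(\wh\D)$ lies \emph{above} $T_d^{-1}(\wh\D)$ while $\wh\gamma_{m^-}$ lies below it; hence $T_d(\wh\D)\cap T_d^{-1}(\wh\D)$ is not a sub-triangle of $T_d^{-1}(\wh\D)$ but a transversal strip, and a nested-intersection argument of the kind you describe is not available. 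Finally, note that Lemma~\ref{lem:convergencia-a-p1} is the input for proving (in Lemma~\ref{lemma:stable_manifold_p1}) that a piece of $W^s_{p_1}$ lies in $T_d^{-1}(\wh\D)$, so invoking properties of $W^s_{p_1}$ inside the region at this stage would be circular.
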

\begin{proof}
	First we note that $y_2<0$. Indeed, 
	$(x_{2},y_{2}) \in T_d^{}(\wh \D)$ and  by Lemma \ref{lem:tapadedalt}
	$$
  \sup \Big\{\eta(t) \mid\ \frac{1}{1-\wh m^\star} \le t \le \frac{1}{1- m^-}\Big\} = \eta\Big(\frac{1}{1-\wh m^\star}\Big) <0.
	$$ 
Moreover, the right boundary of $T_d^{}(\wh \D)$ is given by 
	$\wh  {\gamma}_{\wh m^\star}(t)= (\wh  {x}_m(t),\wh  {y}_m(t))$ and, by Lemma 
	\ref{fitescomponentsgamma}(a), $\wh  {y}_{\wh m^\star} (t) < \wh  {y}_{\wh m^\star} (\frac{1}{1-\wh m^\star}) 
	= \eta(\frac{1}{1-\wh m^\star})<0$.

	Now, let $(x_0,y_0)$ as in the statement  with $y_0<0$. Using that 
	$T_d^{-1}(\wh \D) \cap \{y\le 0 \}$ is below the line 
	$y =-(2-\wh m^\star)^{1/d} x -1$ we have that 
	\begin{equation}\label{fita-x0}
		x_0< \frac{y_0+1}{-(2-\wh m^\star)^{1/d}} . 
	\end{equation}
	
	First, we compute
	\[
	(x_1,y_1) = T_d^{}(x_0,y_0)=(y_0-(x_0+y_0)^d ,y_0-2(x_0+y_0)^d ).
	\]
	We observe that $(x_1,y_1)$ belongs to the line $y=2x-y_0$.	
	
	By the definition of $\wh \D$, we have that $x_1$ is less than the first coordinate 
	of the intersection 
	$\{ y=2x-y_0\} \cap \{ y=   m^{-} x +1\}$, i.e.,  
	$x_1 < \frac{1+y_0}{2-  m^-}$.
	Moreover, using \eqref{fita-x0},  
	\[
	0\le x_1 < \frac{-(2-\wh m^\star)^{1/d}}{2- m^-} x_0.
	\]
	Next we bound 
	\[
	\left| \frac{-(2-\wh m^\star)^{1/d}}{2- m^-}\right| 
	< \frac{(3+\frac{1}{d-1})^{1/d}}{3} \le \frac13 \Big(\frac72\Big)^{1/3}.
	\]
	
Now we deal with the next iterate $(x_2,y_2) =(y_1- (x_1+y_1)^d ,y_1-2(x_1+y_1)^d )$.
Since $(x_1,y_1) \in \wh \D $, 
	$0< x_1+y_1 \le 1$ and $y_1 \ge \wh m^\star x_1 +1$ we conclude that   
	\[
	0\ge x_2 = y_1-(x_1+y_1)^d \ge y_1 -1 \ge \wh m^\star x_1.
	\]
Consequently, 
	\[
	|x_2| \le |\wh m^\star| x_1 \le \frac32 \frac13 \left(\frac72\right)^{1/3}|x_0| \le 
	\frac45 |x_0|.
	\]
Recursively, we  obtain that  $|x_{2k}| \le (\frac45 )^k  |x_0|$  and this implies $x_{2k} \to 0$, Since, by hypothesis, $(x_{2k},y_{2k}) \in T_d^{-1}(\wh \D)$ for all $k \geq 0$  we conclude that $ y_{2k} \to -1$.
\end{proof}


\begin{lemma} \label{lemma:stable_manifold_p1}
The set $T^{-1}_d(\wh \D)$ contains a piece of $W^s_{p_1}$ joining the point $p_1$ with a point in $T^{-1}_d(\wh \D) \cap \{y=-x\}$.
\end{lemma}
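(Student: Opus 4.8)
The plan is to combine the geometric control of $T_d^{-1}(\wh{\mathcal D})$ established in the previous lemmas with the dynamical characterization in Lemma~\ref{lem:convergencia-a-p1}, using a standard continuity/connectedness argument of the type used in \cite[Lemma 5.4]{FGJ24}. First I would exhibit explicitly a piece of the \emph{local} stable manifold $W^s_{\loc,\,p_1}$ sitting inside $T_d^{-1}(\wh{\mathcal D})$: since $p_1$ is a hyperbolic fixed point of $T_d^2$ with stable eigendirection $(1,m^-)$ (see \eqref{eq:m}), and since the lemmas above show that near $p_1$ the region $T_d^{-1}(\wh{\mathcal D})$ is precisely a curvilinear wedge bounded below by the tangent line $y=m^-x-1$ (the convex left boundary $x=g(y)$) and from the other side by the concave curve $\wh\gamma_{m^-}$ — both tangent to this line at $p_1$ — the local stable manifold leaves $p_1$ inside this wedge. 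Hence a nontrivial connected arc $\sigma_0\subset W^s_{\loc,\,p_1}\cap T_d^{-1}(\wh{\mathcal D})$ exists, with one endpoint at $p_1$.

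Next I would \emph{globalize} this arc by pulling it back, i.e. by considering $\sigma_k := T_d^{-2k}(\sigma_0)$, or rather by tracking the connected component of $W^s_{p_1}\cap T_d^{-1}(\wh{\mathcal D})$ containing $p_1$. The key point is to show this component reaches $\partial T_d^{-1}(\wh{\mathcal D})\cap\{y=-x\}$. I would argue by contradiction: suppose the component $\sigma$ of $W^s_{p_1}\cap T_d^{-1}(\wh{\mathcal D})$ through $p_1$ stays in the interior, away from the "outer" boundary arc $\partial T_d^{-1}(\wh{\mathcal D})\cap\{y=-x\}$ (note that the other boundary pieces of $T_d^{-1}(\wh{\mathcal D})$ are the images of the two sides of $\wh{\mathcal D}$ through $p_0$, and points of $W^s_{p_1}$ crossing those would have forward images leaving the region, controlled by the geometry). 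Then every point of $\sigma$ has all its $T_d^2$-iterates in $T_d^{-1}(\wh{\mathcal D})$: indeed, forward iterates of stable-manifold points converge to $p_1$, and the estimates in the proof of Lemma~\ref{lem:convergencia-a-p1} — the contraction $|x_{2k}|\le(4/5)^k|x_0|$ together with the trapping of the $y$-coordinate — show the whole forward orbit is confined to $T_d^{-1}(\wh{\mathcal D})$ once one checks it cannot escape through the sides at the first step. By Lemma~\ref{lem:convergencia-a-p1} this forces $\sigma\subset W^s_{p_1}$ to be contained in a neighborhood of $p_1$ where, by hyperbolicity and the fact that $W^s_{\loc,\,p_1}$ is one-dimensional and analytic, $\sigma$ coincides with a local stable arc; but the local stable manifold, continued, must either close up (impossible, $p_1$ is not periodic on $W^s$ in a way that returns) or exit any fixed compact neighborhood — in particular it must reach $\partial T_d^{-1}(\wh{\mathcal D})$, and the only boundary piece it can meet consistently with forward-invariance is $\{y=-x\}$ (meeting an image of a side of $\wh{\mathcal D}$ puts the next backward iterate outside, i.e. the point is not actually on the component through $p_1$ that we are following). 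This yields the contradiction and proves the arc reaches $\{y=-x\}$.

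Concretely, rather than the contradiction argument I would prefer to phrase it constructively: take the sequence of arcs $\sigma_k\subset T_d^{-1}(\wh{\mathcal D})$ obtained by successive pullbacks of $\sigma_0$ under $T_d^{-2}$, using that $T_d^{-2}$ maps $T_d^{-1}(\wh{\mathcal D})$ across itself in the expanding direction — this is exactly the mechanism whereby $T_d^{-1}(\wh{\mathcal D})$ stretches toward $\{y=-x\}$, dual to the way $T_d(\mathcal D)$ stretches toward $\{y=x\}$ in \cite[Lemma 5.4]{FGJ24}. The union $\bigcup_k\sigma_k$ is a connected piece of $W^s_{p_1}$ inside $T_d^{-1}(\wh{\mathcal D})$; monotonicity of the $x$-coordinate along the flow (from the $(4/5)$-contraction run backwards) shows the far endpoints march monotonically toward the boundary $\{y=-x\}$ and, by compactness, accumulate on a point there. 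The main obstacle, and the step requiring genuine care, is the very first one: verifying that a point of $W^s_{p_1}$ lying in $T_d^{-1}(\wh{\mathcal D})$ near the outer part of the region does not escape through the lateral boundaries (the images of the sides $m=\wh m^\star$ and $m=m^-$ of $\wh{\mathcal D}$) under one step of $T_d^2$ — this is where the precise inequalities from Lemmas~\ref{fitescomponentsgamma}--\ref{lem:tapadedalt} (position of $\wh\gamma_{\wh m^\star}$ and $\wh\gamma_{m^-}$ relative to $T_d^{-1}(\wh{\mathcal D})$, and the upper cap being in $\{x<0,y<0\}$) must be combined to show the image of $\wh{\mathcal D}$ re-enters $T_d^{-1}(\wh{\mathcal D})$ in the right way, i.e. that $T_d^{-1}(\wh{\mathcal D})$ is "forward-trapping modulo the exit face $\{y=-x\}$". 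Once that is in place, Lemma~\ref{lem:convergencia-a-p1} does the rest and the stated piece of $W^s_{p_1}$ is produced.
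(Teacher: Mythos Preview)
Your approach differs substantially from the paper's and carries a real gap at the very first step. The paper does not start from the local stable manifold at all. It takes an \emph{arbitrary} segment $I_0$ joining the left and right boundaries of $T_d^{-1}(\wh\D)$ and observes that $T_d^2(I_0)\subset T_d(\wh\D)$ is a curve whose endpoints lie on $\wh\gamma_{m^-}$ and $\wh\gamma_{\wh m^\star}$; since these two curves were shown (in the discussion preceding Lemma~\ref{lem:tapadedalt}) to lie respectively below and above $T_d^{-1}(\wh\D)$, the image $T_d^2(I_0)$ must cross both lateral boundaries of $T_d^{-1}(\wh\D)$. Defining $I_n=T_d^{-2n}\big(T_d^{2n}(I_{n-1})\cap T_d^{-1}(\wh\D)\big)\subset I_{n-1}$ yields nested nonempty compacta; any point of $I_\infty=\bigcap_n I_n$ has all forward $T_d^2$-iterates in $T_d^{-1}(\wh\D)$, hence lies on $W^s_{p_1}$ by Lemma~\ref{lem:convergencia-a-p1}. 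Since $I_0$ was an arbitrary transversal, $W^s_{p_1}$ meets every cross-section of $T_d^{-1}(\wh\D)$ and the required arc from $p_1$ to $\{y=-x\}$ follows.

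The gap in your route is the claim that $W^s_{\loc,\,p_1}$ enters $T_d^{-1}(\wh\D)$ near $p_1$. The boundary curve $\wh\Gamma_{m^-}$ of $T_d^{-1}(\wh\D)$ is \emph{tangent} at $p_1$ to the line $y=m^-x-1$, and $m^-$ is precisely the stable eigenslope. So whether the local stable manifold bends into the wedge or out of it is a second-order question that you have not addressed and that the lemmas in the section do not settle. (You also write $\wh\gamma_{m^-}$ for the second bounding curve near $p_1$; the curves $\wh\gamma_m$ bound $T_d(\wh\D)$, not $T_d^{-1}(\wh\D)$, whose sides are the $\wh\Gamma_m$.) The paper's nested-intersection argument sidesteps this tangency entirely, and it also absorbs the ``forward-trapping modulo the exit face'' verification you flag as the main obstacle: that verification is exactly the statement that the lateral boundaries $\wh\gamma_{m^-}$, $\wh\gamma_{\wh m^\star}$ of $T_d(\wh\D)$ lie outside $T_d^{-1}(\wh\D)$, which is what drives the nesting.
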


\begin{proof} We will use the same argument we have used in \cite{FGJ24}. 
Take $I_0$ any segment joining the right and left boundaries of    
$T^{-1}_d(\wh \D)$. By the previous lemmas, $T^2_d(I_0)$ is a curve contained in  $T^{}_d(\wh \D)$ joining its right and left boundaries which are outside $T^{-1}_d(\wh \D)$, thus it has to cross the right and left boundaries of $T^{-1}_d(\wh \D)$. 

We define $I_1 = T^{-2}_d (T^{2}_d (I_0) \cap T^{-1}_d(\wh \D)) \subset I_0$ and, in general, 
\[
I_n = T^{-2n}_d (T^{2n}_d (I_{n-1}) \cap T^{-1}_d(\wh \D)) \subset I_{n-1}, \qquad n\ge 1.
\] 
Then, $\{I_n\}_{n\ge 1}$ is a sequence of nested compact sets and 
$I_\infty := \bigcap_{n\ge 1}I_n \ne \emptyset$. This set has the property that all points in $I_\infty$  are such that all their iterates stay in 
$T^{-1}_d(\wh \D)$ and, by Lemma \ref{lem:convergencia-a-p1}, converge to $p_1$. Therefore, $I_\infty = W^s_{p_1} \cap T^{-1}_d(\wh \D)\cap I_0$. 
\end{proof}

\begin{proof}[Proof of Proposition \ref{prop:hetero}]
We will see that the above description of the relative positions of $T_d(\mathcal  D)$ and $T_d^{-1}(\wh {\mathcal D})$ 
(neighbourhoods of pieces of $W^u_{p_0}$ and $W^s_{p_1}$, respectively) implies a heteroclinic intersection between the stable manifold of $p_1$ and the unstable manifold of $p_0$. Unless it is necessary, we drop the dependence on the parameter $m$.

On the one hand, in \cite[Lemma 5.4]{FGJ24} it is proven that there is a connected piece of $W^u_{p_0}$ contained in $T_d(\D)$ joining $p_0$ with some point in $\partial T_d(\D) \cap \{y=x\}$.  On the other hand, the above lemmas show that there is a piece of $W^s_{p_1}$ contained in  $T_d^{-1}(\wh \D)$ which joints $p_1=(0,-1)$ with a point in $T_d^{-1}(\wh \D)\cap \{y=-x\}$. 

We claim that the line $L$ given by  $\{y= m^-x-1\}$, tangent to the left boundary of 
$T_d^{-1}(\wh \D)$ at $(0,-1)$, intersects in two points the right boundary of 
$T_d(D)$ which is given by the curve 
$\gamma_{m}(t)$ in
\eqref{gamma-m} with $m=m^\star=7/2$. If we write $\gamma_{m^\star}(t)=\left(X(t),Y(t)\right)$ we have 
\[
{\b X(t)} = m^\star t -1 - ( (m^\star+1)t -1)^d  , \quad 
{\b Y(t)} = m^\star t -1 - 2( (m^\star+1)t -1)^d, \quad t\in \Big[0, \frac{1}{m^\star+1}\Big]. 
\] 
See Figure \ref{fig:unbounded_global_2}. To check the claim, recall that $ \frac{-6}{\sqrt{13}+1} \le  m^-< -1$. 
We consider the auxiliary function 
\begin{align*}
	\phi(t) & = Y(t) - m^-  X(t) +1 \\
	& =(2-  m^-) [(1- (m^\star +1)t )^d + m^\star t -1] 
	-m^\star t +2 ,
	\qquad t\in \Big[0, \frac{1}{m^\star+1}\Big],
\end{align*}
which measures whether  $\gamma_{m^\star}(t)$ is below, above or on the line $L$.
We have 
\begin{equation*}
\begin{split}
& \phi(0)= 2>0, \qquad \phi( \frac{1 }{m^\star +1})= \frac{m^- +m^\star }{m^\star +1}>, \qquad \text{ and} \\
&\phi''(t) = (2- m^-  ) d(d-1)(m^\star +1)^2 (1- (m^\star +1)t)^{d-2}>0, \qquad  t\in \Big[0, \frac{1}{m^\star +1}\Big]. 
\end{split}
\end{equation*}

Accordingly, in order to see that $\phi$ has two zeros in its domain it is enough to show that there is a point $t_1$ in
$(0, \frac{1}{1+m^\star}) $ such that $\phi(t_1)<0$. We take $t_1=1/8$ and, using that
$m^- > - 4/3$,  we have  
\[\phi(1/8)=(2-  m^-) 
\left( \left(\frac{7}{16} \right) ^{d} -\frac{9}{16}\right)  +  
\frac{25}{16}
< \frac{10}{3} \left(\frac{7^3}{16^3}-\frac{9}{16}\right)
+\frac{25}{16}<0.
\]  
Therefore, $W^u_{p_0}$ has to cross  $L$.

Next we claim that 
$T_d^{-1}(\wh \D) \cap \{y=0\}$ is a  segment $[a_-, a_+] \times \{0\}$ with $a_+ < -3/5$. To see this claim we look for the intersection of the right and left boundaries of $T_d^{-1}(\wh D)$, given by $\Gamma_{\wh m^\star }(t) =( \wh  {\alpha}_{\wh m^\star}(t), \wh  {\beta}_{\wh m^\star }(t) )$
and $\Gamma_{m^-}(t) =( \wh  {\alpha}_{m^-}(t), \wh  {\beta}_{m^-}(t) ) $, respectively, with  $\{y=0\}$. We recall that $\wh m^\star =-1-1/(d-1)$ and
\[
\wh  {\alpha}_{\wh m^\star }(t) = ({\wh m^\star } -2) t +1 + ( (1- {\wh m^\star })t -1)^{1/d}  , \qquad 
\wh  {\beta}_{\wh m^\star }(t) =(2-{\wh m^\star }) t -1, \qquad t\in \Big[0, \frac{1}{1-\wh m^\star }\Big]. 
\] 
The value $t=t_2$ such that $\wh  {\beta}_{\wh m^\star }(t) =0$ is 
$t_2= \frac{1}{2-\wh m^\star } \in  \big[0, \frac{1}{1-\wh m^\star }\big]$,
and

$$
a_+=
\wh  {\alpha}_{\wh m^\star }(t_2) =- \left(\frac{1}{2-\wh m^\star }\right)^{1/d} =
- \left(\frac{1}{3 + 1/(d-1) }\right)^{1/d} 
\le 
-\left(\frac{2}{7}\right)^{1/3} <  -\frac35.
$$
In the same way, denoting $t_3$ the value such that 
$\wh  {\beta}_{m^-}(t_3)=0$, we obtain 
$a_-= - \left(2-\wh m^\star \right)^{-1/d} < a_+ $

Putting together the information of the two previous claims we get that when $y=0$, $W^u_{p_0}$  
is to the right of 
$W^s_{p_1}$ and that there exists some $y = y^0<0$ for which $\gamma_{m^{\star}}$ is to the left of $L$ and therefore  
$W^u_{p_0}$ has to be at the left of 
$W^s_{p_1}$. This finish the proof of the proposition. See Figure \ref{fig:unbounded_global_2}.
\end{proof}

\begin{figure}[ht]
	\centering
	\includegraphics[width=0.65\textwidth]{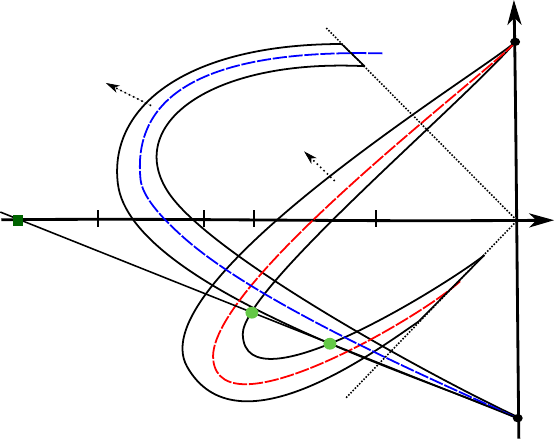}
	\put(-88,197){ \b{\tiny $W^s_{p_1}$}}
	\put(-50,76){ \r{\tiny $W^u_{p_0}$}}
	\put(-93,135){\tiny  $\gamma_{m^\star}$}
	\put(-230,85){\tiny  $L$}
	\put(-240,120){\tiny  $a_{-}$}
	\put(-280,120){\tiny  $1/ {\b m^-}$}
	\put(-184,120){\tiny  $a_{+}$}
	\put(-167,122){\tiny  $-1/2$}
	\put(-102,100){\tiny  $-1/3$}
	\put(-263,186){\tiny  $T_d^{-1}(\wh\D)$}
	\put(-140,152){\tiny  $T_d(\D)$}
	\caption{\small{Sketch of the arguments providing the (topological, not necessarily transversal as it is shown in the picture) intersection of the stable and unstable manifold of the hyperbolic two-cycle $\{p_0,p_1\}$ (Theorem A). The green dots indicates the two intersections between $\gamma_{m^\star}(t)$ and the line $L:=\{y= m^{-}x-1\}$.}}       
	\label{fig:unbounded_global_2}
\end{figure}

\begin{proof}[Proof of Theorem A] Since $d$ is odd, the map $T_d$ is symmetric with respect to $(x,y)\mapsto (-x,-y)$. Proposition \ref{prop:hetero} provides a
(maybe non-transversal) heteroclinic point $q$ in $T_d^{-1}(\wh \D) \cap T_d(\D)$. In any case at this point the manifolds cross each other. Therefore $\overline{q}=-q$ is also a heteroclinic point. By symmetry, at the point $\overline{q}$ the unstable manifold of $p_1$ intersects the stable manifold of $p_0$. We know that the unstable manifold is analytic. The stable manifold is analytic in a neighbourhood of $q$ since the globalization of the local manifold has not meet $\{y=x\}$ yet. Since the  manifolds do not coincide, they have a finite order contact.

Since we do not know if the intersection is transversal, we cannot apply the $\lambda$-Lemma of Palis in \cite{PALIS69}. However, we can apply the singular $\lambda$-Lemma in \cite{Ray}.
In the two dimensional case it asserts that the iteration of a disc in the unstable manifold 
accumulates in a $C^1$ manner to the unstable manifold of $p_0$, except for an arbitrarily small neighbourhood of $p_0$. 

Now consider a piece of the connected component of the unstable manifold of $p_0$ in $T_d^{-1}(\wh \D) \cap T(\D)$ joining two points of the upper and lower boundaries of $T^{-1}(\wh \D)$, respectively. 
Then, by the singular $\lambda$-Lemma, the unstable manifold of $p_1$ will have discs arbitrary
$C^1$-close to the unstable manifold of $p_0$ and therefore the discs will be in $T_d^{-1}(\wh \D) \cap T_d (\D)$.                                       

Finally, using the same argument as in the end of the proof of the first part of Theorem A these discs should have an intersection with the stable manifold of $p_1$ thus providing the desired homoclinic point. 
\end{proof}


\section{Proof of Theorem B} \label{section:proof_B}

In the previous section we have proven the existence of homoclinic points associated to the stable and unstable of the cycle $\{p_0,p_1\}$. Using this fact, in this section we demonstrate that stable and the unstable manifolds of $p_1$ intersect in a transverse homoclinic point.  Our approach is inspired in the work of \cite{CR}. However, there is an important difference. In \cite{CR} the authors deal with analytic area preserving maps and can use tools as the Birkhoff normal form, while our map is not area preserving and it is not an analytic diffeomorphism. Our presentation uses the special structure of the map and the fact that we can linearize the map $T^2_d$ around $p_1$ which a $C^\infty$ conjugation.

For the point $p_1=(0,-1)$ we will denote by 
$$
W_{\loc}^s:=W_{\loc,p_1}^s, \qquad W_{\loc}^u:=W_{\loc,p_1}^u,  \qquad W^s:=W_{p_1}^s  \qquad \text{ and} \qquad  W^u:=W_{p_1}^u
$$
the local stable, local unstable, global stable and global unstable manifolds associated to $p_1$ for the map $T_d^2$, respectively. The size of the local manifolds will be as small as we need.

We split the proof of Theorem B into several  lemmas. Given $z\in \R^2$ we let $B_\varepsilon(z)$ be  the open ball  centered at $z$ and radius $\varepsilon>0$.

\begin{lemma} \label{lemma:q1_q2_local}
Let $\varepsilon >0$ be small enough. Then, there exist two points $q_s$ and $q_u$ in $ B_\varepsilon(p_1)$ such that
$$
q_s \in W_{\loc}^s \cap W^u \qquad \text{and} \qquad  q_u \in 
W_{\loc}^u \cap W^s. 
$$
 Moreover, there exist analytic  local parametrizations of  $W^s$ around $q_u$ 
 and of $W^u$ around $q_s$ 
 given by $\{\phi^s(u)\mid \, u\in (-\delta, \delta)\}$ with $\phi^s(0)=q_u$ 
 and  $\{\phi^u(u)\mid \,   u\in (-\delta, \delta)\}$ with $\phi^u(0)=q_s$
 for some $\delta >0$ small. 
 
 Since the manifolds do not coincide,  the above intersections (at the points $q_s$ and $q_u$) have finite order contact. 
\end{lemma}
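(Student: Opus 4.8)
The plan is to transport the homoclinic point produced in Theorem~A into an arbitrarily small neighbourhood of $p_1$, once in forward and once in backward time under $T_d^2$. Write $h\in W^s\cap W^u$ for the homoclinic point of $T_d^2$ at $p_1$ furnished by Theorem~A, and recall that near $h$ both invariant manifolds are analytic: $W^u$ is analytic everywhere, being the image of the analytic germ $W^u_{\loc}$ under iterates of the polynomial $T_d$, while $W^s$ is analytic near $h$ because, as observed in the proof of Theorem~A, the backward globalization of $W^s_{\loc}$ that carries it to $h$ has not yet reached the line $\{y=x\}$. Fix the size of the local manifolds small, in particular smaller than $\varepsilon$. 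Since $h\in W^s$ its forward $T_d^2$--orbit tends to $p_1$, so for $N$ large the point $q_s:=T_d^{2N}(h)$ lies in $B_\varepsilon(p_1)$ together with all of its forward iterates, hence $q_s\in W^s_{\loc}$; and $q_s\in W^u$ because the global unstable manifold is $T_d^2$--invariant. Likewise, since $h\in W^u$ its backward $T_d^2$--orbit tends to $p_1$, so for $M$ large $q_u:=T_d^{-2M}(h)\in W^u_{\loc}\cap B_\varepsilon(p_1)$; and $q_u\in W^s$ since $W^s$ is completely invariant under $T_d^2$. As $h$ is not periodic, $q_s,q_u\neq p_1$.

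Next, fix analytic parametrizations $\psi^u$ of $W^u$ and $\psi^s$ of $W^s$ near $h$ with $\psi^u(0)=\psi^s(0)=h$. Because $T_d$ is polynomial, $\phi^u:=T_d^{2N}\circ\psi^u$ is an analytic parametrization of $W^u$ around $q_s=\phi^u(0)$, which settles the claim at $q_s$. For $q_u$ one takes $\phi^s:=T_d^{-2M}\circ\psi^s$; this is an analytic parametrization of $W^s$ around $q_u=\phi^s(0)$ as soon as the finite orbit segment $\{T_d^{-j}(h):1\le j\le 2M\}$ avoids $\{y=x\}$, the only set where $T_d^{-1}$ is not smooth. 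I expect this avoidance to be the main obstacle of the lemma. It is handled by tracking the orbit through the explicit regions of Section~\ref{sect:prelimpaper1}: from $h\in T_d(\D)$ one gets $T_d^{-j}(h)\in T_d^{-(j-1)}(\D)$ for every $j\ge 1$, and $\D$ satisfies $y<x$ throughout, so already $T_d^{-1}(h)\notin\{y=x\}$; the subsequent iterates are trapped in the sets $T_d^{-i}(\D)$, $T_d^{-1}(\wh\D)$, $T_d(\wh\D)$ whose positions relative to $\{y=x\}$ were described there, and, for large $j$, in small neighbourhoods of the cycle $\{p_0,p_1\}$, which lies at distance $1/\sqrt2$ from $\{y=x\}$; finally the forward orbit of $h$ misses $\{y=x\}$ by the analyticity of $W^s$ near $h$. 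Hence, choosing the local size small and $M$ large, $W^s$ is analytic in a neighbourhood of $q_u$ and $\phi^s$ is the desired parametrization.

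Finally, the two intersections have finite order of contact. Near $q_s$, both $W^s_{\loc}$ and $W^u$ are analytic arcs through $q_s$; writing them as graphs of real-analytic functions over a common axis, an infinite order of contact would force these functions to agree, so the two arcs would coincide on an open set, and propagating this along the invariant manifolds we would get that $W^s$ and $W^u$ share a sub-arc, contradicting that the stable and unstable manifolds of $p_1$ do not coincide. The same reasoning at $q_u$, with $W^u_{\loc}$ and $W^s$ in place of $W^s_{\loc}$ and $W^u$, finishes the proof.
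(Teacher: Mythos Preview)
Your overall scaffolding---push the homoclinic point $h$ of Theorem~A forward to get $q_s$ and backward to get $q_u$, then argue analyticity and finite-order contact---is exactly the route the paper takes. The handling of $q_s$ (where $W^u$ is analytic because $T_d$ is polynomial and $W^s_\loc$ is analytic) and of the finite-order contact are also essentially the same.

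The gap is in your treatment of the analytic parametrization of $W^s$ near $q_u$. You assert that the finite backward orbit $\{T_d^{-j}(h):1\le j\le 2M\}$ avoids $\{y=x\}$, but the justification you sketch does not establish this. The homoclinic point $h$ lies on a disc of $W^u_{p_1}$ which, by the singular $\lambda$-Lemma argument in Theorem~A, sits inside $T_d(\D)$ near $p_0\in\{y>x\}$; yet $W^u_{p_1}$ emanates from $p_1\in\{y<x\}$, so this piece of $W^u_{p_1}$ has certainly crossed $\{y=x\}$ on its way there. The sets $T_d^{-i}(\D)$ for $i\ge 1$ are never analyzed in the paper, and there is no reason the discrete backward $T_d$-orbit of $h$ stays inside the specific regions $\D$, $\wh\D$, $T_d(\wh\D)$, $T_d^{-1}(\wh\D)$ whose location you invoke; those regions only trap the \emph{local} pieces of the manifolds near $p_0,p_1$, not arbitrary globalized pieces. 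So the avoidance claim is unproved and quite possibly false.

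The paper does not attempt to avoid $\{y=x\}$. Instead it explicitly allows the backward orbit of $q_s$ to hit $\{y=x\}$ at finitely many points ${\bf r}_k=(r_k,r_k)$ and shows that, although $T_d^{-1}$ is not differentiable there, the stable manifold still admits an \emph{analytic reparametrization} across each such crossing: writing $W^s$ near ${\bf r}_k$ as an analytic curve $\phi(t)$ and using that $d$ is odd, one substitutes $u=t^{1/d}$ so that the $1/d$-th root appearing in $T_d^{-1}(\phi(t))$ becomes polynomial in $u$, yielding an analytic parametrization of $T_d^{-1}(W^s)$ near $(-r_k,r_k)$. Iterating this finitely many times produces the analytic parametrization $\phi^s$ at $q_u$. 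This reparametrization step is the genuinely new idea needed for the lemma, and it is what your argument is missing.
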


Fix $\varepsilon_1>0$ small enough such that 
$B_{\varepsilon_1}(q_s)\subset B_{\varepsilon}(p_1)$ and $B_{\varepsilon_1}(q_u)\subset B_{\varepsilon}(p_1)$. We denote by $\wh{W}^u$ the piece of $W^u \subset B_{\varepsilon_1}(q_s)$ and by  $\wt{W}^s$ the piece of $W^s \subset B_{\varepsilon_1}(q_u)$.

\begin{proof}
Fix $\varepsilon>0$ small enough and consider local manifolds $W_{\loc}^s,  \ W_{\loc}^u $ contained in $ B_\varepsilon(p_1)$. Let $q\in W^s \cap W^u$ be the point determined by the topological transversal intersection of the stable and the unstable manifolds of  $p_1$ for the map $T_d^2$ given by Theorem A. By iterating forward this point by $T_d^2$ and $(T_d^2)^{-1}$ we obtain the existence of $q_s$ and $q_u$ in $B_\varepsilon(p_1)$, respectively. Moreover, since $W_{\loc}^s$ and  $W^u$ are analytic we have that $W_{\loc}^s \cap W^u$ intersect with finite order contact (otherwise they would coincide). Then, there exists $\phi^u$ as claimed. By construction, there exists $n_0>0$ such that
$$
T_d^{-n_0}(q_s)=q_u.
$$
According to the previous arguments if
\begin{equation}\label{eq:y=x}
T_d^{-j}(q_s) \cap \{y=x\} = \emptyset, \qquad j=1,\ldots, n_0-1,
\end{equation}
then $W_{\loc}^u \cap W^s$ intersect at $q_u$, $W^s $ is analytic in a neighbourhood of $q_u$ and the intersection has a finite order contact and the lemma follows. Now, we consider the case that there exists a finite sequence of natural numbers $0< j_1 < j_2 < \cdots < j_\ell <n_0$, $1\le \ell  < n_0$,  such that
\begin{equation}\label{eq:y=x_bis}
T_d^{-j_k}(q_s) \cap \{y=x\}=:{\bf  r}_k \in \mathbb R^2, \qquad k=1,\ldots, \ell .
\end{equation}
Note that ${\bf  r}_k=(r_k,r_k), \ r_k \in \mathbb R$, and hence, $T_d^{-1}(r_k,r_k)=(-r_k,r_k)$. First, we deal with ${\bf r}_1$, the first time the globalization of $W^s_\loc$  meets $\{y=x\}$ so that $W^s$ is analytic from $p_1$ to this point. Thus, near ${\bf  r}_1$ the stable manifold $W^s$,
is analytic and  can be parametrized  as
$$
\phi(t)=\left(r_1+t^{\alpha_1}\left(a_1+f_1(t)\right),r_1+t^{\beta_1}\left(b_1+g_1(t)\right)\right), \qquad |t| < \delta_1,  
$$
where $\alpha_1,\beta_1 \in \mathbb N$, $a_1,b_1 \in \mathbb R\setminus \{0\}$, $f_1(t),g_1(t)$ are analytic, satisfy  $f_1(0)=  0$ and $g_1(0)=  0$ and  $\delta_1>0$ is small enough. Since $W^s \not\subset \{y=x\}$ we have 
$$
t^{\alpha_1}\left(a_1+f_1(t)\right) - t^{\beta_1}\left(b_1+g_1(t)\right) \not\equiv 0.
$$
Using the expression of $T_d^{-1}$ (see Eq. \ref{eq:T_inverse_odd})  we have
$$
T_d^{-1}\left(\phi(t)\right)=\left(\begin{array}{l}
-r_1-2t^{\alpha_1}\left(a_1+f_1(t)\right)+t^{\beta_1}\left(b_1+g_1(t)\right)+t^{\gamma_1/d}\left(1+O(t)\right)^{1/d}\\ 
\ \ r_1+2t^{\alpha_1}\left(a_1+f_1(t)\right)-t^{\beta_1}\left(b_1+g_1(t)\right)
\end{array}\right), 
$$
where $\gamma_1\ge \min\{\alpha_1,\beta_1\}$.
Since $d$ is odd we can  reparametrize the curve $\phi(t) $ using the new parameter $u=t^{1/d}$ to   obtain $\wh {\phi}(u) =\phi(u^{d}) $ analytic and
$$
T_d^{-1}\left(\wh {\phi}(u)\right) 
= T_d^{-1}\left(\phi(u^d)\right)=\left(\begin{array}{l}
-r_1+O(u^{\wh {\alpha}_1})\\ 
\ \ r_1+O(u^{\wh {\beta}_1})
\end{array}\right), 
$$
with $\wh {\alpha}_1$, $\wh {\beta}_1 \in \N$.

We conclude thus  that $W^s$ admits an analytic parametrization in a sufficiently small neighbourhood of  $T_d^{-1}(r_1,r_1)=(-r_1,r_1)$. Repeating  the same procedure a finite number of times it is clear that $W^s$  intersects  $W_{\loc}^u$  with finite order contact at the point $q_u$.
\end{proof}

The translation  
\[
\T: (\wh  x,\wh  y)\mapsto (x,y)=(\wh  x,\wh  y-1)
\] 
moves $p_1$ to the origin. For simplicity we write the new coordinates again as $(x,y)$. Observe that (in the new coordinates) $T_d^2(0,0)=(0,0)$ and that 
\begin{equation}\label{eq:linear_matrix_p1}
DT_d^2(0,0)=\left(\begin{array}{cc}
3d^2-2d &\ \  3d^2-4d+1 \\
6d^2-2d & \ \ 6d^2-6d+1
\end{array}\right).
\end{equation}
The eigenvalues and eigenvectors are given in 
\eqref{eq:lambda} and \eqref{eq:m}, respectively.

We will denote 
\begin{equation} \label{deflambdamu}
	\lambda:=\lambda_d^{+}> 1,\qquad \mu:=\lambda_d^{-} < 1, \qquad  m_{\lambda}:= m^{+}_d \qquad \text{ and} \qquad m_{\mu}:= m^{-}_d
\end{equation}
 (we drop the dependence on $d$ unless it is strictly necessary). We recall from Section \ref{sect:prelimpaper1} that 
\begin{equation} \label{fiteslambdamu}
\lambda> 57, \quad 1/9<\mu<0.1556, \quad 2<m_\lambda <2.3028 \quad \text{and} \quad -1.3027<m_\mu <-1.
\end{equation}

We parametrize the local stable and unstable manifolds associated to the origin by the $x$-variable so that the expressions can be written as $y=\Psi^s(x)$ and  $y=\Psi^u(x)$, respectively. We obviously have 
\begin{equation}\label{eq:Phi_s_u}
  \frac{d\Psi^s}{dx}(x)|_{x=0} =m_\mu  \qquad \text{ and} \qquad   \frac{d\Psi^u}{dx}(x)|_{x=0} = m_\lambda .
\end{equation}

Next step is to introduce local analytic coordinates $( \wh {\xi},\wh {\eta}) $
around $(0,0)$ so that the expression of the local stable and unstable manifolds would be $\wh {\eta}=0$ and $\wh {\xi}=0$, respectively. 

\begin{lemma} \label{lemma:tothom_recte}
We consider the local change of variables   
$$(
x,y)\mapsto (\wh {\xi},\wh {\eta})=\Theta(x,y):=( y-\Psi^u(x), y-\Psi^s(x)). 
$$
Then, for $\varepsilon>0$ small enough the local expression of $T_d^2$ in $B_\varepsilon(0,0)$ is given by
$$
\mathcal F(\wh {\xi},\wh {\eta})=\mathcal L(\wh {\xi},\wh {\eta}) + \mathcal N(\wh {\xi},\wh {\eta}), 
$$
where 
$$
\mathcal L(\wh {\xi},\wh {\eta})=(\lambda\wh {\xi}, \mu\wh {\eta}),\quad \mathcal N(0,0)=(0,0) \quad \text{ and} \quad D\mathcal N(0,0) = 0.
$$
Moreover, the local change of coordinates, $ (\wh {\xi},\wh {\eta})=\Theta(x,y)$, is analytic.
\end{lemma}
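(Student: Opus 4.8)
The plan is to construct the analytic coordinate change $\Theta$ in two stages: first straighten out the invariant manifolds, then verify that the straightening genuinely diagonalizes the linear part and that the nonlinear remainder has vanishing derivative at the origin. The starting observation is that $\Psi^s,\Psi^u$ are analytic near $x=0$ because the local invariant manifolds of the hyperbolic fixed point $(0,0)$ of the analytic map $T_d^2$ are analytic (the stable manifold theorem for analytic diffeomorphisms; here $T_d^2$ itself is polynomial, so no inverse-smoothness issue arises locally). Consequently $\Theta(x,y)=(y-\Psi^u(x),\,y-\Psi^s(x))$ is an analytic map defined in a neighbourhood of $(0,0)$, and since $\Psi^s(0)=\Psi^u(0)=0$ we have $\Theta(0,0)=(0,0)$. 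Its differential at the origin is
\[
D\Theta(0,0)=\begin{pmatrix} -m_\lambda & 1 \\ -m_\mu & 1\end{pmatrix},
\]
which has determinant $m_\lambda-m_\mu\neq 0$ by \eqref{fiteslambdamu} (indeed $m_\lambda>2$ while $m_\mu<-1$), so $\Theta$ is an analytic local diffeomorphism by the inverse function theorem, establishing the last sentence of the lemma.

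Next I would check that $\Theta$ does what it is designed to do: the curve $W^u_{\loc}=\{(x,\Psi^u(x))\}$ is sent to $\{(0,\wh\eta):\ \wh\eta=\Psi^u(x)-\Psi^s(x)\}$, i.e.\ to the $\wh\eta$-axis $\{\wh\xi=0\}$, and symmetrically $W^s_{\loc}$ is sent to $\{\wh\eta=0\}$. Because these are invariant curves for $T_d^2$ and $\Theta$ conjugates $T_d^2$ to $\mathcal F:=\Theta\circ T_d^2\circ\Theta^{-1}$, the map $\mathcal F$ leaves both coordinate axes invariant. Writing $\mathcal F(\wh\xi,\wh\eta)=(F_1(\wh\xi,\wh\eta),F_2(\wh\xi,\wh\eta))$, invariance of $\{\wh\xi=0\}$ forces $F_1(0,\wh\eta)\equiv 0$, hence $F_1(\wh\xi,\wh\eta)=\wh\xi\,G_1(\wh\xi,\wh\eta)$ with $G_1$ analytic; likewise $F_2(\wh\xi,\wh\eta)=\wh\eta\,G_2(\wh\xi,\wh\eta)$. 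Evaluating derivatives at $(0,0)$ gives $D\mathcal F(0,0)=\operatorname{diag}(G_1(0,0),G_2(0,0))$, and since conjugacy preserves the spectrum of the linearization, $\{G_1(0,0),G_2(0,0)\}=\{\lambda,\mu\}$; the eigenvector data \eqref{eq:m} together with the identification of which axis is the unstable one pins down $G_1(0,0)=\lambda$, $G_2(0,0)=\mu$. Therefore
\[
\mathcal F(\wh\xi,\wh\eta)=(\lambda\wh\xi,\mu\wh\eta)+\mathcal N(\wh\xi,\wh\eta),\qquad \mathcal N(\wh\xi,\wh\eta):=\big(\wh\xi(G_1-\lambda),\,\wh\eta(G_2-\mu)\big),
\]
and by construction $\mathcal N(0,0)=(0,0)$ while $D\mathcal N(0,0)=D\mathcal F(0,0)-\mathcal L=0$, which is exactly the assertion of the lemma.

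The only genuinely delicate point is the analyticity of $\Psi^s$ and $\Psi^u$ as graphs over the $x$-variable near $x=0$: this needs that the local invariant manifolds are nondegenerate graphs, i.e.\ that their tangent lines at $(0,0)$ are not vertical. That is guaranteed by \eqref{eq:Phi_s_u}, since the slopes $m_\mu$ and $m_\lambda$ are finite (indeed in the ranges recorded in \eqref{fiteslambdamu}), so the eigenvectors $(1,m^\pm)$ are transverse to $\{x=0\}$ and the analytic stable/unstable manifold theorem yields analytic $\Psi^{s},\Psi^{u}$ with $\Psi^s(0)=\Psi^u(0)=0$. Everything else is the chain-rule bookkeeping sketched above; I would present the Taylor coefficients only to the order needed to read off $D\mathcal N(0,0)=0$. (One could alternatively invoke a general smooth-linearization theorem, but since the paper wants an \emph{analytic} change of coordinates and the spectrum $\{\lambda,\mu\}$ with $\mu<1<\lambda$ is non-resonant only in the weak sense, the direct manifold-straightening argument above is cleaner and is what I would write.)
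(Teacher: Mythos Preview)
Your argument is correct and in fact more complete than the paper's own proof: the paper only computes $D\Theta(0,0)$, checks it is nonsingular via \eqref{fiteslambdamu}, and invokes the inverse function theorem to conclude that $\Theta$ is a local analytic diffeomorphism. It does not spell out why the linear part of $\mathcal F$ is diagonal; your invariance-of-axes argument (factoring $F_1=\wh\xi\,G_1$, $F_2=\wh\eta\,G_2$) is exactly the missing step, and your remark on the analyticity of $\Psi^{s},\Psi^{u}$ as graphs is well placed.

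One internal inconsistency worth noting. With $\Theta(x,y)=(y-\Psi^u(x),\,y-\Psi^s(x))$ as stated, you correctly observe that $W^u_{\loc}$ is sent to $\{\wh\xi=0\}$ and $W^s_{\loc}$ to $\{\wh\eta=0\}$. But then the $\wh\xi$-axis $\{\wh\eta=0\}$ is the \emph{stable} manifold, so the restricted dynamics contracts there and one actually gets $G_1(0,0)=\mu$, $G_2(0,0)=\lambda$, the opposite of what you record in the next sentence. This mismatch is inherited from the paper: the displayed matrix in the paper's proof and the later use in Lemma~\ref{lemma:curves} (where $q_s\in W^s_{\loc}$ lands at $(0,\eta_0)$) are consistent with the swapped map $\Theta(x,y)=(y-\Psi^s(x),\,y-\Psi^u(x))$, not with the formula in the lemma statement. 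Your method is unaffected; just be aware that the line ``pins down $G_1(0,0)=\lambda$, $G_2(0,0)=\mu$'' does not follow from the axis identification you wrote two sentences earlier. (Also a triviality: the determinant of your $D\Theta(0,0)$ is $m_\mu-m_\lambda$, not $m_\lambda-m_\mu$; either way it is nonzero.)
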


\begin{proof}
We claim that the new variables define a local change of coordinates around the origin. Indeed, since $\Psi^s(x)$ and  $\Psi^u(x)$ are analytic,  by the Inverse Function Theorem we only need to check that 
$$
D\Theta(0,0)
=
\left(\begin{array}{cc}
- \frac{\partial \Psi^u}{\partial x}(x)|_{x=0} & 1 \\ 
 & \\
- \frac{\partial\Psi^s}{\partial x} (x)|_{x=0} & 1 
\end{array}\right)
=
 \left(\begin{array}{cc}
		- m_\mu & 1 \\ 
		& \\
		- m_\lambda & 1 
	\end{array}\right)
$$
is non-singular and this is a direct consequence of \eqref{fiteslambdamu}. 
Clearly $\Theta$ is a local analytic diffeomorphism.  See Figure \ref{fig:reparametrization}.
\end{proof}

\begin{figure}[ht]
    \centering
     \includegraphics[width=\textwidth]{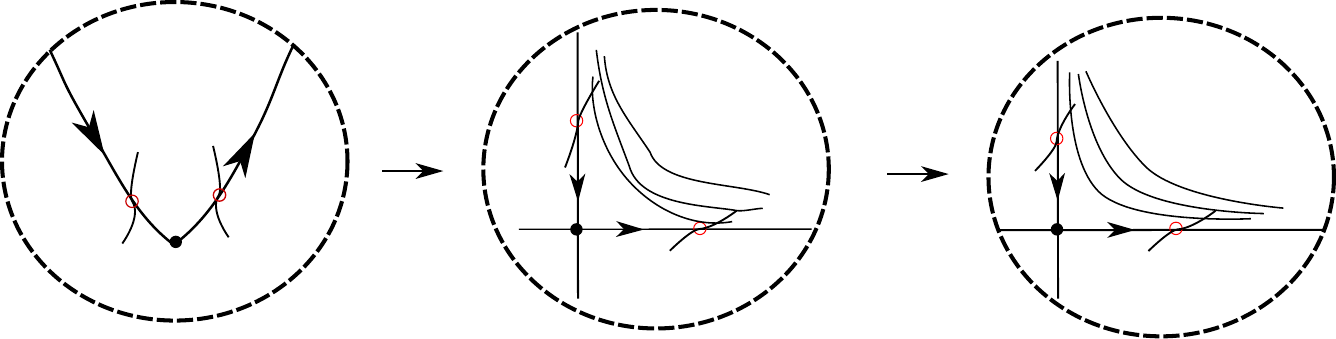}
      \put(-372,83){ \tiny $W_{\loc}^u$}
       \put(-424,83){ \tiny $W_{\loc}^s$}
       \put(-405,63){ \tiny $W^u$}
       \put(-378,65){ \tiny $W^s$}
         \put(-412,40){\tiny $q_s$}
       \put(-370,40){ \tiny $q_u$}
       \put(-390,25){ \tiny $p_1$}  
        \put(-394,95){\tiny $B(p_1)$}
         \put(-315,42){ \small $\Theta$}
            \put(-145,42){\small $\Phi$}
     \put(-187,29){ \tiny $\wh {\xi}$}
       \put(-262,92){ \tiny $\wh {\eta}$}
         \put(-18,29){ \tiny $\xi$}
           \put(-103,85){ \tiny $\eta$} 
                \caption{\small{The changes of coordinates corresponding to Lemma \ref{lemma:tothom_recte} and Lemma \ref{lemma:sternberg}. In fact $\Theta$ in this figure includes a primer change of coordinates to move $p_1$ to the origin.}}       
    \label{fig:reparametrization}
    \end{figure}

We will  see next that $\mathcal F$ is $\mathcal C^{\infty}$-conjugate to its linear part $\mathcal L$. For this we will apply Sternberg's Theorem \cite[Theorem 1]{Ste}. The following lemma  checks a  key  hypothesis of that theorem.

\begin{lemma} \label{lemma:resonancies}
The eigenvalues $\lambda =  \lambda_d^{+}>1 $ and $\mu=\lambda_d^{-}<1$ of the linear part $\mathcal L$ of $\mathcal F$ at $(0,0)$ given in \eqref{deflambdamu} and \eqref{eq:lambda} are non-resonant.
\end{lemma}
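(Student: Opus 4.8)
The plan is to show directly that no multiplicative resonance relation can hold among the two eigenvalues $\lambda = \lambda_d^+$ and $\mu = \lambda_d^-$. Recall that in dimension two a pair $(\lambda,\mu)$ is \emph{resonant} (in the sense required by Sternberg's theorem) if there exist non-negative integers $k,\ell$ with $k+\ell \ge 2$ such that $\lambda = \lambda^k\mu^\ell$ or $\mu = \lambda^k\mu^\ell$. First I would use the size estimates recorded in \eqref{fiteslambdamu}, namely $\lambda > 57$ and $1/9 < \mu < 0.1556 < 1$, together with the fact (from the characteristic equation $p(\la)=\la^2-(1-8d+9d^2)\la+d^2$) that $\lambda\mu = d^2$. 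These two facts alone are essentially enough: since $\lambda>1>\mu>0$, the quantity $\lambda^k\mu^\ell$ is a strictly increasing function of $k$ and strictly decreasing function of $\ell$, so one only has to rule out finitely many "boundary" cases.

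Concretely, I would argue as follows. Consider first a relation $\lambda = \lambda^k\mu^\ell$ with $k+\ell\ge 2$. If $k\ge 1$ this forces $\lambda^{k-1}\mu^\ell = 1$; since $\lambda>1$ and $0<\mu<1$ we need $\ell\ge 1$ and then $k\ge 2$, and $\lambda^{k-1} = \mu^{-\ell}$. But $\lambda^{k-1}\ge \lambda > 57$ while $\mu^{-\ell} \ge 9^\ell$; matching growth rates and using $\lambda\mu = d^2$ one can bound $\lambda = d^2/\mu < 9d^2$ and $\lambda > 9d^2 - 8d$, which pins $\lambda$ between two explicit cubic-free expressions in $d$, and the equation $\lambda^{k-1}\mu^\ell = 1$ rewritten via $\mu = d^2/\lambda$ becomes $\lambda^{k-1-\ell} = d^{-2\ell}$, impossible for $k-1-\ell\ge 0$ (left side $>1$) and, when $k-1-\ell<0$, forcing $\lambda^{\ell+1-k} = d^{2\ell}$ with $\ell+1-k\le \ell-1$, again contradicting $\lambda > 9d^2-8d > d^{2\ell/(\ell-1)}$ for all $d\ge 3$ after a short monotonicity check. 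If $k=0$ then $\lambda = \mu^\ell$ with $\ell\ge 2$, impossible since $\lambda>1>\mu^\ell$. The relation $\mu = \lambda^k\mu^\ell$ is handled symmetrically: if $\ell\ge 1$ we get $\lambda^k\mu^{\ell-1}=1$, forcing $k=0$ and $\ell\ge 2$, i.e. $\mu^{\ell-1}=1$, impossible; if $\ell=0$ then $\mu = \lambda^k$ with $k\ge 2$, impossible since $\mu<1<\lambda^k$.

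The cleanest route, which I would actually write, avoids most of this casework: since $0<\mu<1<\lambda$, any monomial $\lambda^k\mu^\ell$ with $k\ge 1$ satisfies $\lambda^k\mu^\ell \ge \lambda\mu^\ell$, and one shows $\lambda\mu^\ell$ is either $>\lambda$ (if $\ell=0$, excluded since then $k\ge 2$ and $\lambda^k>\lambda$) or, for $\ell\ge 1$, lies strictly between $\mu$ and $\lambda$ only in a controlled range; the identity $\lambda\mu=d^2$ then reduces every putative resonance to an equation of the form $\lambda^a = d^{b}$ with integers $a\ge 1$, $b\ge 1$, and I would conclude by noting $\lambda^a$ is never an integer power of $d$ because $\lambda = \frac12\big(9d^2-8d+1+(3d-1)\sqrt{9d^2-10d+1}\big)$ is irrational for every $d\ge 3$ (the discriminant $9d^2-10d+1 = (9d-1)(d-1)$ is not a perfect square for $d\ge 3$, which one checks by squeezing it strictly between $(3d-2)^2$ and $(3d-1)^2$). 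Hence $\lambda^a \in \Q$ would force $a$ even and $\lambda^2\in\Q$, but $\lambda^2 = (1-8d+9d^2)\lambda - d^2 \notin \Q$ since $\lambda\notin\Q$; contradiction.

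The main obstacle is not any single computation but making the reduction to "$\lambda^a=d^b$" rigorous while keeping track of the constraint $k+\ell\ge 2$ and both sign patterns of the exponents; the irrationality argument then finishes everything uniformly in $d$, so I would lead with the irrationality of $\lambda$ (via the discriminant squeeze $(3d-2)^2 < (9d-1)(d-1) < (3d-1)^2$ for $d\ge 3$) and derive non-resonance as an immediate corollary, relegating the elementary inequalities from \eqref{fiteslambdamu} to a remark in case a reader prefers the purely arithmetic-free version.
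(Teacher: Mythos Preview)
Your ``cleanest route'' is essentially the paper's own proof: both use $\lambda\mu=d^2$ to rewrite any putative resonance as a relation of the form $\lambda^{a}=d^{b}$ (or a relation that is trivially impossible by size), and then kill it by the irrationality of nonzero powers of $\lambda$. Your squeeze $(3d-2)^2<9d^2-10d+1<(3d-1)^2$ is a clean explicit justification that $\Delta$ is not a perfect square, which the paper states without proof, so that is a genuine improvement.

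There is, however, a gap in your final irrationality step. The assertion ``$\lambda^a\in\Q$ would force $a$ even and $\lambda^2\in\Q$'' is not valid as written: an irrational algebraic number can have a rational odd power (think of $2^{1/3}$), and $\lambda^4\in\Q$ does not by itself give $\lambda^2\in\Q$. What you actually need, and what the paper uses, is that $\lambda^a\notin\Q$ for every $a\ne 0$. This follows in one line from the Galois conjugate: since $\mu$ is the $\Q$-conjugate of $\lambda$, any rational $\lambda^a$ would satisfy $\lambda^a=\mu^a$, which contradicts $0<\mu<1<\lambda$. Once you insert this, your reduction to $\lambda^a=d^b$ finishes the proof exactly as in the paper.

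A smaller remark: in your earlier casework sketch, the inequality $9d^2-8d>d^{2\ell/(\ell-1)}$ fails already for $d=3$, $\ell=2$ (it reads $57>81$), so that route would not close as stated; but since you abandon it in favor of the irrationality argument, this does not affect your intended proof.
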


\begin{proof} 
We first note that $\lambda\mu=d^2$ (the determinant of $DT^2_d(0,0)$). It is easy to check by induction that    
$$
\lambda^k=a_k+b_k\sqrt{\Delta} \qquad \text{ and} \qquad  \mu^k=a_k-b_k\sqrt{\Delta}, \qquad k\ge 1,
$$
and 
$$
\lambda^{-k}=\frac{1}{g^k} \left(a'_k-b'_k\sqrt{\Delta}\right) \qquad \text{ and} \qquad   \mu^{-k} =\frac{1}{g^k} \left(a'_k+b'_k\sqrt{\Delta}\right), \qquad k\ge 1,
$$
where $a_k,b_k, a'_k, b'_k \in \mathbb N$, $g=12 d^2 (6 d^2 - 4 d + 1)\in \N $  and $\Delta=9d^2-10d+1$ is not a perfect square. 
This means that $\lambda^k, \mu^k \in \R\setminus \Q $ for all $k\ne 0$.

There are two possible types of resonances: 
\begin{equation} \label{eq:lambda_resonances}
	\lambda=\lambda^{n}\mu^{m} \qquad \text{ with} \qquad n,m\geq 0 \qquad 
	\text {and} \quad n+m\ge 2, 
\end{equation}
and 
\begin{equation} \label{eq:mu_resonances}
	\mu=\lambda^{n}\mu^{m} \qquad \text{ with} \qquad  n,m\geq 0 \qquad 
\text {and} \quad n+m\ge 2. 
\end{equation}

We deal with \eqref{eq:lambda_resonances}. We rewrite   it  as
\begin{equation} \label{eq:lambda_resonances-mod}
	1= \lambda^{n-m-1}(\lambda\mu)^{m} =\lambda^{n-m-1} d^{2m}  .
\end{equation}
We distinguish two cases: 
(a) $n\ne m+1$  and (b)   $n= m+1$.

In case (a), since $\lambda^{n-m-1} \in \R\setminus \Q $ and $d\in \N$ the previous equality is impossible.
In case (b),  $m$ cannot be 0. Then, $(\lambda\mu)^{m}= d^{2m} \ge 9$ so that \eqref{eq:lambda_resonances-mod}  is also impossible. 

Concerning resonances of the form \eqref{eq:mu_resonances} the argument is completely analogous.

\end{proof}

Theorem 1 in \cite{Ste} provides a $\mathcal C^{\infty}$ local change of coordinates conjugating $\mathcal F$ to its linear part $\mathcal L$. 
From it we will obtain a near the identity conjugation.

\begin{lemma}\label{lemma:sternberg}
There is a conjugacy from $\mathcal F$ to its linear part $\mathcal L$ at the origin of the form
\begin{equation}\label{sternberg}
(\xi,\eta):=\Phi(\wh {\xi},\wh {\eta})=\left(\begin{array}{c}
\wh {\xi} (1+\phi_1(\wh {\xi},\wh {\eta})) \\ 
\wh {\eta} (1+\phi_2(\wh {\xi},\wh {\eta}))
\end{array}\right), 
\end{equation}
where $\mathcal \phi_j(\wh {\xi},\wh {\eta}),\ j=1,2$,  are $\mathcal C^{\infty}$  functions defined in a sufficiently small neighbourhood of the origin with $\mathcal \phi_j(0,0)=(0,0),\ j=1,2$.
\end{lemma}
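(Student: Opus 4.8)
The plan is to derive the prescribed near-identity conjugacy from Sternberg's linearization theorem, after a linear normalization and an application of Hadamard's lemma. First I would apply \cite[Theorem 1]{Ste} to the $\mathcal C^{\infty}$ (in fact analytic) germ $\mathcal F$ at the origin: it fixes $(0,0)$, its linear part is $\mathcal L=\mathrm{diag}(\lambda,\mu)$, and by Lemma \ref{lemma:resonancies} the eigenvalues are non-resonant, so there is a $\mathcal C^{\infty}$ local diffeomorphism $H$ with $H(0,0)=(0,0)$ and $H\circ\mathcal F=\mathcal L\circ H$ on a neighbourhood of the origin. This $H$ need not have the required shape \eqref{sternberg}, so the remaining work is to adjust it. Differentiating the conjugacy relation at the origin gives $DH(0,0)\,\mathcal L=\mathcal L\,DH(0,0)$; since $\lambda\neq\mu$ by \eqref{fiteslambdamu}, the matrix $DH(0,0)$ must be diagonal, say $\mathrm{diag}(c_1,c_2)$ with $c_1c_2\neq0$. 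Replacing $H$ by $\mathrm{diag}(c_1^{-1},c_2^{-1})\circ H$, which still conjugates $\mathcal F$ to $\mathcal L$ because diagonal matrices commute with $\mathcal L$, I may assume $DH(0,0)=\mathrm{Id}$.

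The final step uses that a conjugacy carries invariant manifolds to invariant manifolds. By Lemma \ref{lemma:tothom_recte} the two coordinate axes $\{\wh{\xi}=0\}$ and $\{\wh{\eta}=0\}$ are, near the origin, exactly the local invariant manifolds $W^u_{\loc}$, $W^s_{\loc}$ of $\mathcal F$, and correspondingly the axes $\{\xi=0\}$, $\{\eta=0\}$ are the local invariant manifolds of $\mathcal L$. Since $H$ conjugates $\mathcal F$ to $\mathcal L$ and is tangent to the identity at the origin, it sends each axis into the corresponding axis; writing $H=(H_1,H_2)$ this means $H_1(0,\wh{\eta})\equiv0$ and $H_2(\wh{\xi},0)\equiv0$. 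Hadamard's lemma (Taylor's formula with integral remainder, valid in the $\mathcal C^{\infty}$ category) then produces $\mathcal C^{\infty}$ functions $h_1,h_2$ with $H_1(\wh{\xi},\wh{\eta})=\wh{\xi}\,h_1(\wh{\xi},\wh{\eta})$ and $H_2(\wh{\xi},\wh{\eta})=\wh{\eta}\,h_2(\wh{\xi},\wh{\eta})$, and the normalization $DH(0,0)=\mathrm{Id}$ forces $h_1(0,0)=h_2(0,0)=1$. Setting $\phi_j:=h_j-1$ gives the required $\mathcal C^{\infty}$ functions vanishing at the origin and exhibits $\Phi:=H$ in the form \eqref{sternberg}.

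The whole argument is essentially bookkeeping once Sternberg's theorem and the non-resonance of Lemma \ref{lemma:resonancies} are in hand; the only point needing slight care is that the tangent-to-the-identity normalization, together with the hyperbolic splitting $\mu<1<\lambda$, prevents the conjugacy from interchanging the two coordinate axes, so that the factorizations pulling out $\wh{\xi}$ and $\wh{\eta}$ are legitimate and yield \emph{global} (on a small neighbourhood) $\mathcal C^{\infty}$ factors rather than merely formal ones.
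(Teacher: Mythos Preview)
Your proof is correct and follows essentially the same approach as the paper: Sternberg's theorem, preservation of the coordinate axes (as invariant manifolds) by the conjugacy, Hadamard's integral factorization, and normalization by a diagonal matrix commuting with $\mathcal L$. The only cosmetic difference is the order of the last two steps---the paper first factors out $\wh\xi$ and $\wh\eta$ and then post-composes with $A^{-1}=\mathrm{diag}(\alpha^{-1},\beta^{-1})$, whereas you normalize $DH(0,0)$ to the identity first and then factorize.
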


\begin{proof}
Let $\wh {\Phi}(\wh {\xi},\wh {\eta})$ be the $\mathcal C^{\infty}$ local conjugacy given by Sternberg's Theorem. Consequently,  $\wh {\Phi}$ should send the stable and unstable manifolds of $\mathcal F$ to the corresponding ones of $\mathcal L$, which in this case means that  it  preserves the axes.
Writing $\wh {\Phi} = (\wh {\Phi}_1,  \wh {\Phi}_2)$, this is translated into the conditions $\wh {\Phi}_1(0,\wh {\eta}) =0$ and $\wh {\Phi}_2(\wh {\xi},0)=0$. Then,
\[
\wh {\Phi}_1(\wh  \xi,\wh {\eta}) 
= \wh {\Phi}_1(0,\wh {\eta})+ \int _0^1 \partial _\xi \wh {\Phi}_1(t \wh  \xi,\wh {\eta})\, \xi \, dt 
= \wh  \xi (\alpha + \wh {\phi}_1(\wh  \xi,\wh {\eta}) ) 
\]
with ${\wh \phi}_1(0,0)=0$, 
and analogously 
$
\wh {\Phi}_2(\wh  \xi,\wh {\eta}) 
= \wh  \eta (\beta + \wh {\phi}_2(\wh  \xi,\wh {\eta}) ) 
$ with  ${\phi}_2(0,0)=0$.
Since $\wh {\Phi}$ is a diffeomorphim, $\alpha \beta \ne 0$. 
We write $A = \left(\begin{array}{cc}\alpha & 0 \\0 & \beta\end{array}\right) $. 
We claim that $\Phi:=A^{-1}\wh {\Phi}$ is also a conjugation from $\mathcal F$ to $\mathcal L$. Indeed, since $A$ commutes with $\mathcal L$,
$$
\Phi \mathcal F=A^{-1}\wh {\Phi}\mathcal F = A^{-1} \mathcal L \wh {\Phi}=\mathcal L A^{-1} \wh {\Phi}=\mathcal L \Phi.
$$
Moreover,  $\Phi$ is of the form given in  \eqref{sternberg}.  See Figure \ref{fig:reparametrization}.

\end{proof}

In Lemma \ref{lemma:q1_q2_local} we have proven the existence of the points 
$$
q_s\in W_{\loc}^s\cap \wh{W}^u \qquad \text{ and} \qquad q_u \in W_{\loc}^u\cap \wt{W}^s.
$$ 
Then, we can use the changes of coordinates introduced in the previous lemmas to transport those curves to a neighbourhood of the origin. Denote by $\gamma_1(t)$ and $\gamma_2(t)$ the parametrizations of $(\Phi \circ \Theta \circ \T)(\wh{W}^u)$ and $(\Phi \circ \Theta \circ \T )(\wt{W}^s)$, respectively. We  focus on the pieces of $\gamma_1(t)$ and $\gamma_2(t)$ in the first quadrant. Without loss of generality we can assume that these pieces are parametrized by $t\ge 0$.

\begin{lemma}\label{lemma:curves}
The curves $\gamma_1(t)$ and $\gamma_2(t)$ intersect   the coordinate axes $\{\xi=0\}$ and $\{\eta=0\}$ 
at points $\wh q_s = (0,\eta_0)$ and $\wh q_u=(\xi_0, 0)$	
and have a finite order contact there, respectively. Moreover, for $t$ small enough we have that $\gamma_j(t),\ j=1,2$,  admit the following parametrization
\begin{equation} \label{eq:gammes_t}
\begin{split}
&\gamma_1(t)=(t^{\ell_1}(a_1+g_1(t)),\eta_0+t), \\
&\gamma_2(t)=(\xi_0+t^{\ell_2}(a_2+g_2(t)),t^{\ell_3}(a_3+g_3(t))),
\end{split}
\end{equation}
where  $\ell_j\geq 1$  for $j=1,2,3$, $a_1 a_2 a_3\ne 0$, $\xi_0,\eta_0>0$, and $g_j(t)$ are $C^\infty$ functions with $g_j(0)=0$, for $j=1, 2,3$. 
See Figure \ref{fig:transversal_intersection}.
\end{lemma}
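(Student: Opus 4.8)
The plan is to transport the analytic local parametrizations supplied by Lemma~\ref{lemma:q1_q2_local} through the chain of changes of coordinates $\Phi\circ\Theta\circ\T$, and then to normalize the resulting curves by a reparametrization followed by a Taylor expansion. Concretely, I would put $\gamma_1:=(\Phi\circ\Theta\circ\T)\circ\phi^u$ and $\gamma_2:=(\Phi\circ\Theta\circ\T)\circ\phi^s$. Since $\T$ is a translation, $\Theta$ is analytic by Lemma~\ref{lemma:tothom_recte}, and $\Phi$ is the $\mathcal C^{\infty}$ conjugacy of Lemma~\ref{lemma:sternberg}, the maps $\gamma_1,\gamma_2$ are $\mathcal C^{\infty}$ regular curves. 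Recall that $\Theta$ sends $W^s_{\loc}$ and $W^u_{\loc}$ to the coordinate axes and that $\Phi$, having the form \eqref{sternberg}, fixes the axes; hence $\Phi\circ\Theta\circ\T$ carries $W^s_{\loc}$ onto $\{\xi=0\}$ and $W^u_{\loc}$ onto $\{\eta=0\}$.

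Next I would locate where these curves meet the axes. Since $q_s\in W^s_{\loc}$ and $q_s\neq p_1$, the point $\wh q_s:=(\Phi\circ\Theta\circ\T)(q_s)$ lies on $\{\xi=0\}$ and is not the origin; passing to the branch of $\gamma_1$ that lies in the first quadrant (as we agreed to do before the statement) gives $\wh q_s=(0,\eta_0)$ with $\eta_0>0$, and symmetrically $q_u\in W^u_{\loc}\setminus\{p_1\}$ gives $\wh q_u=(\xi_0,0)$ with $\xi_0>0$ on $\gamma_2$. For the contact assertion, recall that near $q_s$ both $W^u$ and $W^s_{\loc}$ are analytic and, since $W^u\neq W^s$, do not coincide, so they have a contact of finite order at $q_s$; because $\Phi\circ\Theta\circ\T$ is a diffeomorphism and the order of contact of two submanifolds is a diffeomorphism invariant, $\gamma_1$ meets $\{\xi=0\}$ at $\wh q_s$ with finite order contact, and the same argument at $q_u$ gives that $\gamma_2$ meets $\{\eta=0\}$ at $\wh q_u$ with finite order contact.

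It remains to read off the normal forms. The curve $\gamma_1$ is regular and, at $\wh q_s$, is not tangent to the horizontal axis $\{\eta=0\}$ (i.e. $W^u$ is not tangent at $q_s$ to the direction that the straightening sends to the unstable axis), so near $\wh q_s$ it is a graph over the $\eta$-coordinate; taking $t=\eta-\eta_0$ as parameter yields $\gamma_1(t)=(\psi_1(t),\eta_0+t)$ with $\psi_1\in\mathcal C^{\infty}$ and $\psi_1(0)=0$. That the contact of $\gamma_1$ with $\{\xi=0\}$ is \emph{finite} means precisely that $\psi_1$ does not vanish to infinite order, so $\psi_1$ vanishes to some order $\ell_1\geq 1$, and Taylor's theorem (Hadamard's lemma applied $\ell_1$ times) gives $\psi_1(t)=t^{\ell_1}(a_1+g_1(t))$ with $a_1\neq 0$, $g_1\in\mathcal C^{\infty}$, $g_1(0)=0$. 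For $\gamma_2$, I would fix a regular $\mathcal C^{\infty}$ parameter $t$ with $\gamma_2(0)=\wh q_u$ and write $\gamma_2(t)=(\xi_0+P(t),Q(t))$; by the same diffeomorphism invariance $Q$ and $P$ vanish to finite orders $\ell_3\geq 1$ and $\ell_2\geq 1$ respectively (with $\min\{\ell_2,\ell_3\}=1$ because $t$ is a regular parameter), and Taylor's theorem again produces $P(t)=t^{\ell_2}(a_2+g_2(t))$, $Q(t)=t^{\ell_3}(a_3+g_3(t))$ with $a_2a_3\neq 0$ and $g_2,g_3\in\mathcal C^{\infty}$ vanishing at $0$.

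The step I expect to be the real obstacle is justifying that $\gamma_1$ and $\gamma_2$ genuinely admit these graph-type normal forms, i.e. controlling the tangent directions of $W^u$ at $q_s$ and of $W^s$ at $q_u$ with respect to the straightening axes so that the graph representations are legitimate. By contrast, the passage to the merely $\mathcal C^{\infty}$ Sternberg coordinates is harmless: the finiteness of contact is already available in the analytic category — near $q_s$ and $q_u$, before $\Phi$ is applied — and finite order of contact (as well as infinite order of contact) is preserved by $\mathcal C^{\infty}$ diffeomorphisms.
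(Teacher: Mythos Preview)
Your approach is exactly the paper's: its proof is a two-line remark that the claim follows from the finite-order contact at $q_s,q_u$ established in Lemma~\ref{lemma:q1_q2_local} together with the $\mathcal C^\infty$ regularity of the coordinate changes, and you have simply unpacked that remark in full detail.

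On the obstacle you flag (whether $\gamma_1$ is a graph over $\eta$): note that the only way this fails is if $\gamma_1$ is tangent to the $\xi$-direction at $\wh q_s$, which forces $\gamma_1$ to be \emph{transversal} to $\{\xi=0\}$ there; but that means $W^u$ already meets $W^s_{\loc}$ transversally at $q_s$, so Theorem~B holds outright and Lemmas~\ref{lemma:curves}--\ref{lemma:asym_angle} are not needed. The paper does not make this case distinction explicit either, so your worry is legitimate but harmless for the goal of the section.
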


\begin{proof}
The lemma follows from the fact that $q_s$ and $q_u$ are points of finite order contact between the stable and the unstable manifolds of $p_1$ and the change of coordinates we have used is $\mathcal C^{\infty}$. 
\end{proof}

\begin{figure}[ht]
    \centering
     \includegraphics[width=0.35\textwidth]{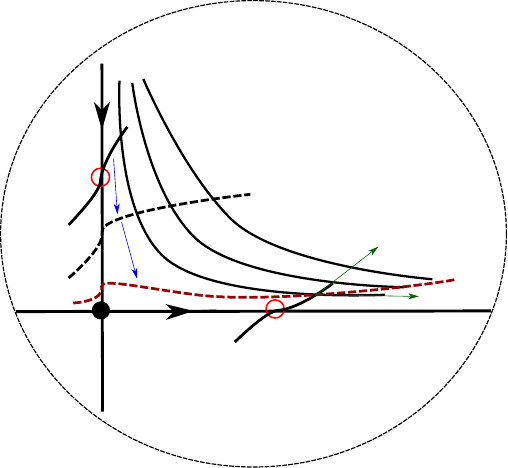}
      \put(-157,73){ \tiny $\gamma_1(t)$}
      \put(-142,87){ \tiny $\eta^0$}
      \put(-134,120){\tiny $\eta$}
      \put(-14,57){ \tiny $\mathcal L^n(\gamma_1(t))$}
      \put(-105,30){ \tiny $\gamma_2(t)$}
      \put(-78,38){ \tiny $\xi^0$}
      \put(-17,40){ \tiny $\xi$}
 \caption{\small{Sketch of the situation described in Lemma \ref{lemma:curves}.}}       
    \label{fig:transversal_intersection}
 \end{figure}

We want to show that, for $k$ large enough, $\gamma_2(t)$ and $\mathcal L^k(\gamma_1(t))$ intersect transversally. This framework is quite close to the one in \cite[Theorem 1.1]{CR}  but in their case the linear map admits  the function $H(x,y)= xy$ as a first integral, which is not our case. Then, we provide a proof in our case to get  the same conclusion.

Given $\lambda>1 $ and $0<\mu<1$ introduced in \eqref{deflambdamu}
and \eqref{eq:lambda} and we
consider the auxiliary interpolation map
$$
\mathcal L^\tau(\xi,\eta)=\left(\begin{array}{cc} \lambda^\tau & 0  \\ 0  & \mu^\tau \end{array}\right)  \left(\begin{array}{c} \xi   \\ \eta  \end{array}\right)   
, \qquad \tau>0.
$$
We also consider the first quadrant  $Q= \{(\xi,\eta) \mid \ \xi\ge0, \eta\ge 0\}$ and $H:Q \to Q$ defined by 
\begin{equation}\label{eq:H}
	H(\xi,\eta)=\xi^{\log \mu^{-1}}\eta^{\log \lambda}.
\end{equation}
It is continuous and real analytic in the interior of $Q$.

\begin{lemma} \label{lemma:level_curves}
The function $H$ is a first integral of $\mathcal L^\tau$, $\tau >0$, in   $Q$.
\end{lemma}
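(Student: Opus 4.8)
The plan is to verify directly that $H(\mathcal L^\tau(\xi,\eta)) = H(\xi,\eta)$ for all $\tau > 0$ and all $(\xi,\eta) \in Q$, which is what it means for $H$ to be a first integral of the one-parameter family $\mathcal L^\tau$. Since the computation is completely explicit, this is really just an exponent-bookkeeping check, and the only subtlety is making sure the boundary of $Q$ (where $\xi = 0$ or $\eta = 0$) is handled by the continuity of $H$ rather than by the analytic formula.

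\begin{proof}
Recall $\lambda > 1 > \mu > 0$, so $\log \lambda > 0$ and $\log \mu^{-1} = -\log\mu > 0$; thus the exponents $\log\mu^{-1}$ and $\log\lambda$ appearing in \eqref{eq:H} are positive real numbers and $H$ is well defined and continuous on $Q$, real analytic on its interior. Fix $\tau > 0$ and $(\xi,\eta)$ in the interior of $Q$. Then
\[
H\bigl(\mathcal L^\tau(\xi,\eta)\bigr) = H\bigl(\lambda^\tau \xi,\ \mu^\tau \eta\bigr) = \bigl(\lambda^\tau \xi\bigr)^{\log \mu^{-1}} \bigl(\mu^\tau \eta\bigr)^{\log \lambda} = \lambda^{\tau \log \mu^{-1}}\, \mu^{\tau \log \lambda}\, \xi^{\log\mu^{-1}} \eta^{\log\lambda}.
\]
It remains to observe that the scalar prefactor is $1$: taking logarithms,
\[
\tau \log\mu^{-1} \cdot \log\lambda + \tau \log\lambda \cdot \log\mu = \tau \log\lambda \bigl(\log\mu^{-1} + \log\mu\bigr) = \tau \log\lambda \cdot 0 = 0,
\]
so $\lambda^{\tau \log\mu^{-1}} \mu^{\tau \log\lambda} = 1$ and hence $H(\mathcal L^\tau(\xi,\eta)) = \xi^{\log\mu^{-1}}\eta^{\log\lambda} = H(\xi,\eta)$. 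This proves invariance on the interior of $Q$. Finally, if $(\xi,\eta) \in \partial Q$ then $\mathcal L^\tau(\xi,\eta) \in \partial Q$ as well (the axes are invariant under $\mathcal L^\tau$), so both $H(\xi,\eta)$ and $H(\mathcal L^\tau(\xi,\eta))$ vanish; alternatively, the identity $H\circ \mathcal L^\tau = H$ extends from the interior to all of $Q$ by continuity. Hence $H$ is a first integral of $\mathcal L^\tau$ for every $\tau > 0$.
\end{proof}

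The step I expect to matter most is not the algebra — which is routine — but rather keeping the logic clean about why this particular $H$ is the right choice: the exponents $\log\mu^{-1}$ and $\log\lambda$ are precisely what makes the cross-cancellation $\log\mu^{-1}\log\lambda + \log\lambda\log\mu = 0$ happen, and this is the fact that will be exploited in the following lemmas to force the transversal intersection of $\gamma_2$ with $\mathcal L^k(\gamma_1)$ by a level-curve argument à la \cite{CR}. So while the proof itself is short, it is worth recording explicitly that the level sets $\{H = c\}$ with $c > 0$ are the hyperbola-like curves $\eta = (c/\xi^{\log\mu^{-1}})^{1/\log\lambda}$ that play the role of the first integral $H(x,y) = xy$ in the area-preserving setting of \cite{CR}.
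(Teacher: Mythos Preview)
Your proof is correct and follows exactly the same direct computation as the paper's proof, which simply writes $H(\mathcal L^\tau(\xi,\eta)) = \lambda^{\tau\log\mu^{-1}}\mu^{\tau\log\lambda}\,\xi^{\log\mu^{-1}}\eta^{\log\lambda} = H(\xi,\eta)$ in one line. You have merely added the explicit logarithm check that the prefactor equals $1$ and a remark on the boundary case, both of which are fine but not strictly needed.
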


\begin{proof}
To prove the lemma we compute 
\[
H\left(\mathcal L^\tau(\xi,\eta)\right)
=H\left(\lambda^\tau\xi,\mu^\tau\eta\right)
=\lambda^{\tau\log\mu^{-1}}\mu^{\tau\log\lambda} \xi^{\log \mu^{-1}}\eta^{\log \lambda}
=	H(\xi,\eta).
\]
\end{proof}

Next step is to show that there exist  reparametrizations $t=\sigma_j(s)$,  $j=1,2$, 
of the curves $\gamma_j(t),\ j=1,2$, which have a useful property.

\begin{lemma} \label{lemma:reparam_gamma_j}
There exist continuous reparametrizations $ \wt \gamma_j (s)=\gamma_j(\sigma_j(s))$, $s\in [0,s_0)$,  of the curves $\gamma_j(t)$ given by  $t=\sigma_j(s)$, $ j=1,2$, that are $C^\infty$ in $(0,s_0)$  and they satisfy
$$
H\left(\wt{\gamma}_j (s)\right)=s, \qquad  s\in (0,s_0)
$$
for some $s_0>0$ small enough. 
\end{lemma}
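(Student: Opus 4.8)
The plan is to reparametrize each curve $\gamma_j$ by its $H$-value. The key point is that $H$, restricted to each curve, is a strictly monotone function of the original parameter $t$ near $t=0$, with a definite (one-sided) limit, so that it can be inverted to yield the desired reparametrization. I would treat the two curves separately since they have slightly different normal forms in \eqref{eq:gammes_t}.

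First I would compute $H\circ\gamma_1$. Using $\gamma_1(t)=(t^{\ell_1}(a_1+g_1(t)),\eta_0+t)$ with $\eta_0>0$, $a_1\ne0$, and $g_1(0)=0$, we get
\[
H(\gamma_1(t))=\bigl(t^{\ell_1}(a_1+g_1(t))\bigr)^{\log\mu^{-1}}(\eta_0+t)^{\log\lambda}
= t^{\ell_1\log\mu^{-1}}\,\bigl(a_1+g_1(t)\bigr)^{\log\mu^{-1}}(\eta_0+t)^{\log\lambda}.
\]
Write this as $t^{c_1}\,u_1(t)$ with $c_1=\ell_1\log\mu^{-1}>0$ (recall $0<\mu<1$ so $\log\mu^{-1}>0$) and $u_1(t)=(a_1+g_1(t))^{\log\mu^{-1}}(\eta_0+t)^{\log\lambda}$, a $C^\infty$ function for $t\ge0$ small with $u_1(0)=|a_1|^{\log\mu^{-1}}\eta_0^{\log\lambda}>0$; restricting to the first-quadrant branch we may take the relevant quantities positive so $u_1(0)>0$. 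Hence $\psi_1(t):=H(\gamma_1(t))$ satisfies $\psi_1(0)=0$, $\psi_1$ is $C^\infty$ on $(0,t_0)$, and $\psi_1'(t)=t^{c_1-1}\bigl(c_1u_1(t)+tu_1'(t)\bigr)>0$ for $t$ small and positive, so $\psi_1$ is a strictly increasing homeomorphism from $[0,t_0)$ onto $[0,s_0)$ for some $s_0>0$, smooth with smooth inverse on the open intervals. Set $\sigma_1:=\psi_1^{-1}$ and $\wt\gamma_1(s):=\gamma_1(\sigma_1(s))$; then $H(\wt\gamma_1(s))=s$ by construction. The same computation applied to $\gamma_2(t)=(\xi_0+t^{\ell_2}(a_2+g_2(t)),t^{\ell_3}(a_3+g_3(t)))$ gives $\psi_2(t):=H(\gamma_2(t))=t^{c_2}u_2(t)$ with $c_2=\ell_3\log\lambda>0$ (since $\lambda>1$) and $u_2(0)=\xi_0^{\log\mu^{-1}}|a_3|^{\log\lambda}>0$, and the identical monotonicity argument yields $\sigma_2:=\psi_2^{-1}$ and $\wt\gamma_2(s):=\gamma_2(\sigma_2(s))$ with $H(\wt\gamma_2(s))=s$, after shrinking $s_0$ if necessary so both are defined on $[0,s_0)$.

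The main (minor) obstacle is bookkeeping about positivity and branches of the fractional powers: $H$ is only defined and analytic on the \emph{open} first quadrant, so I must confirm that the relevant pieces of $\gamma_1,\gamma_2$ lie in $Q$ (or its closure meeting the axes only at the endpoints $\wh q_s,\wh q_u$), which is exactly the setup in Lemma \ref{lemma:curves}: $\xi_0,\eta_0>0$ and the curves approach the axes. One must also note that near $t=0$ the sign of the $\xi$-component of $\gamma_1$ and the $\eta$-component of $\gamma_2$ is constant, so the $\log\mu^{-1}$- and $\log\lambda$-powers are well defined (possibly after replacing $t$ by the appropriate branch, but since we already reduced to $t\ge0$ and the first quadrant this is automatic). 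Apart from this, everything reduces to the elementary fact that a function of the form $t\mapsto t^{c}u(t)$ with $c>0$, $u$ smooth and $u(0)>0$, is a smooth-on-open, strictly increasing homeomorphism from a right-neighbourhood of $0$ onto one, hence invertible; the inverse is continuous at $0$ and $C^\infty$ away from it, giving precisely the claimed regularity of $\sigma_j$ and $\wt\gamma_j$.
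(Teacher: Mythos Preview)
Your proof is correct and follows essentially the same idea as the paper: compute $H(\gamma_j(t))$, recognise it as $t^{c}\cdot(\text{smooth, positive at }0)$ with $c>0$, and invert. The only cosmetic difference is that the paper first takes the $c$-th root, obtaining a function $G_j(t)=t\cdot(\text{smooth, nonzero at }0)$ to which the Inverse Function Theorem applies \emph{at} $t=0$, and then writes $\sigma_j(s)=G_j^{-1}(s^{1/c})$; this yields ready-made leading-order asymptotics for $\sigma_j$ that are used in the next lemma, whereas your direct monotonicity argument would require extracting those asymptotics separately.
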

\begin{proof}
For $j=1$ we impose the condition  $H\left(\gamma_1(t)\right)=s$ to obtain  $t:=\sigma_1(s)$. Using \eqref{eq:gammes_t} and \eqref{eq:H} we have
\begin{equation}
\begin{split}
H\left(\gamma_1(t)\right)=s \quad &\iff \quad t^{\ell_1\log \mu^{-1}}(a_1+g_1(t))^{\log\mu^{-1}}\left(\eta_0+t\right)^{\log \lambda}=s \\
& \iff  G_1(t):=t(a_1+g_1(t))^{1/\ell_1}\left(\eta_0+t\right)^{\log \lambda / (\ell_1 \log \mu^{-1})}=s^{1/\left(\ell_1\log\mu^{-1}\right)}.
\end{split}
\end{equation}
We have $G_1(0)=0$, $ c_1:=G_1'(0)=a_1^{1/\ell_1}\eta_0^{\log \lambda / (\ell_1 \log \mu^{-1})} \ne 0$.  

Consequently, by the Inverse Function Theorem,  $G_1$ is locally invertible
and we can write 
\begin{equation}\label{eq:G_1}
t=\sigma_1(s):=G_1^{-1} (s^{1/(\ell_1\log\mu^{-1})})
\end{equation}
for $|s| < s_0 $ for some $s_0 >0$ small.
 
For $j=2$, arguing as above, we have
\begin{equation*}
H\left( \gamma_2(t) \right)=s \ \iff \ G_2(t):=t(a_3+g_3(t))^{1/\ell_3}\left(\xi_0+t^{\ell_2}\left(a_2+g_2(t)\right)\right)^{\log \mu^{-1} / (\ell_3 \log \lambda)}=s^{1/\left(\ell_3\log \lambda\right)}.
\end{equation*}
Analogous computations imply that  $c_2:= G_2'(0)=a_3^{1/\ell_3}\xi_0^{\log \mu^{-1} / (\ell_3 \log \lambda)} \ne 0$ and
\begin{equation}\label{eq:G_2}
t=\sigma_2(s):=G_2^{-1} (s^{1/\left(\ell_3\log \lambda\right)}).
\end{equation}  
\end{proof}

The following lemma establishes a relation between $ \mathcal L^\tau\left(\wt{\gamma}_1(s)\right)$ and $\wt{\gamma}_2(s)$. 

\begin{lemma} \label{lemma:moving_Lrho}
Let $s_0>0$ be small enough. Then, there exists  a $C^\infty$ function $\tau(s), \ s\in(0,s_0)$, such that 
$$
\mathcal L^{\tau(s)} \left(\wt{\gamma}_1(s)\right)=\wt{\gamma}_2(s), \qquad  s\in(0,s_0).
$$
Moreover 
\begin{equation} \label{eq:hro_0_lim}
\lim_{s\to 0^+} \tau(s)=\infty.
\end{equation}
In particular, there exist  $k_0$ in $\mathbb N$  and  a sequence  of positive values  $\{s_k\}_{k\ge k_0}$, such that $s_k\to 0 $ and   $\tau(s_k)=k$ for every $k \geq k_0$.
\end{lemma}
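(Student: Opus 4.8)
The plan is to reduce everything to the relation $\mathcal{L}^\tau(\wt\gamma_1(s)) = \wt\gamma_2(s)$, solve for $\tau$ componentwise, and then analyze the limit $s \to 0^+$ using the parametrizations in \eqref{eq:gammes_t}.

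\textbf{Step 1: Solve for $\tau(s)$ componentwise.} Write $\wt\gamma_1(s) = (\xi_1(s), \eta_1(s))$ and $\wt\gamma_2(s) = (\xi_2(s), \eta_2(s))$, where by Lemma \ref{lemma:reparam_gamma_j} and \eqref{eq:gammes_t} we have, with $t = \sigma_j(s)$,
\[
\xi_1(s) = \sigma_1(s)^{\ell_1}(a_1 + g_1(\sigma_1(s))), \quad \eta_1(s) = \eta_0 + \sigma_1(s),
\]
\[
\xi_2(s) = \xi_0 + \sigma_2(s)^{\ell_2}(a_2 + g_2(\sigma_2(s))), \quad \eta_2(s) = \sigma_2(s)^{\ell_3}(a_3 + g_3(\sigma_2(s))).
\]
The equation $\mathcal{L}^\tau(\xi_1,\eta_1) = (\lambda^\tau \xi_1, \mu^\tau \eta_1) = (\xi_2, \eta_2)$ forces, from the first coordinate, $\lambda^{\tau} = \xi_2(s)/\xi_1(s)$, hence
\[
\tau(s) = \frac{\log \xi_2(s) - \log \xi_1(s)}{\log \lambda}.
\]
Since $\sigma_j$ is $C^\infty$ on $(0,s_0)$ with $\sigma_j(0) = 0$ and $\sigma_j'(0) \ne 0$ (from $c_j \ne 0$ in Lemma \ref{lemma:reparam_gamma_j}), and since $\xi_1(s) \to 0$ while $\xi_2(s) \to \xi_0 > 0$ as $s \to 0^+$, both $\log \xi_2$ and $\log \xi_1$ are $C^\infty$ on $(0,s_0)$ (shrinking $s_0$ so $\xi_1(s) > 0$ and $\xi_2(s)>0$ there); thus $\tau(s)$ is $C^\infty$ on $(0,s_0)$. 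Moreover $\log \xi_1(s) \to -\infty$ and $\log\xi_2(s) \to \log \xi_0$, so $\tau(s) \to +\infty$, which is \eqref{eq:hro_0_lim}.

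\textbf{Step 2: Check consistency of the two coordinates.} One must verify that the $\tau$ determined by the $\xi$-coordinate also satisfies the $\eta$-coordinate equation $\mu^\tau \eta_1(s) = \eta_2(s)$. This is exactly where $H$ enters: by Lemma \ref{lemma:level_curves}, $H \circ \mathcal{L}^\tau = H$, and by Lemma \ref{lemma:reparam_gamma_j}, $H(\wt\gamma_1(s)) = H(\wt\gamma_2(s)) = s$. So $H(\mathcal{L}^{\tau(s)}(\wt\gamma_1(s))) = H(\wt\gamma_1(s)) = s = H(\wt\gamma_2(s))$, meaning $\mathcal{L}^{\tau(s)}(\wt\gamma_1(s))$ and $\wt\gamma_2(s)$ lie on the same level curve of $H$. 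Since $H(\xi,\eta) = \xi^{\log\mu^{-1}}\eta^{\log\lambda}$ is strictly monotone in each variable separately on the open first quadrant, a level set of $H$ in the interior of $Q$ is a graph; two points of $Q^\circ$ on the same level set with the same first coordinate must coincide. As $\mathcal{L}^{\tau(s)}(\wt\gamma_1(s)) = (\xi_2(s), \mu^{\tau(s)}\eta_1(s))$ has first coordinate $\xi_2(s)$ equal to that of $\wt\gamma_2(s)$, the two points agree, giving $\mathcal{L}^{\tau(s)}(\wt\gamma_1(s)) = \wt\gamma_2(s)$ for all $s \in (0,s_0)$. (One should check the relevant points lie in the interior of $Q$ for small $s>0$: $\xi_2(s), \eta_2(s), \xi_1(s), \eta_1(s)$ are all positive there by \eqref{eq:gammes_t} with $a_j \ne 0$, $\xi_0, \eta_0 > 0$, after possibly restricting the sign of the parametrizing variable.)

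\textbf{Step 3: Extract the sequence $s_k$.} Since $\tau$ is continuous on $(0,s_0)$ and $\tau(s) \to +\infty$ as $s \to 0^+$, its image contains an interval $[M,\infty)$ for some $M$; pick $k_0 \in \mathbb{N}$ with $k_0 > M$, and for each $k \ge k_0$ the intermediate value theorem yields $s_k \in (0,s_0)$ with $\tau(s_k) = k$. Choosing such $s_k$ with $s_k \to 0$ (possible since $\tau \to \infty$ only at $0$, so preimages of large values accumulate only at $0$) completes the proof.

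\textbf{Main obstacle.} The delicate point is Step 2: ensuring that the single scalar $\tau(s)$ forced by one coordinate automatically satisfies the other coordinate's equation. In \cite{CR} this is immediate because their linear map preserves $H(x,y) = xy$ and the curves are level sets of that same $H$; here the first integral $H$ in \eqref{eq:H} has irrational exponents, so the argument must go through the strict monotonicity of $H$ on $Q^\circ$ and a careful check that all points involved stay in the open quadrant. The regularity claim (that $\tau$ is $C^\infty$, not merely continuous) is then a routine consequence of $\xi_1(s), \xi_2(s)$ being $C^\infty$ and nonvanishing on $(0,s_0)$.
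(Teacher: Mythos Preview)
Your proof is correct and follows essentially the same route as the paper: define $\tau(s)$ from the $\xi$-coordinate via $\tau(s)=\frac{1}{\log\lambda}\log\frac{\xi_2(s)}{\xi_1(s)}$, use the first integral $H$ together with $H(\wt\gamma_1(s))=H(\wt\gamma_2(s))=s$ to force agreement in the $\eta$-coordinate, read off the limit from $\xi_1(s)\to 0$, $\xi_2(s)\to\xi_0>0$, and conclude with the intermediate value theorem. One small slip: the claim ``$\sigma_j'(0)\ne 0$'' is not quite right (the $\sigma_j$ involve fractional powers of $s$ and need not be differentiable at $0$), but you never actually use this---only $C^\infty$ on the open interval $(0,s_0)$ is needed, and that holds.
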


\begin{proof}
Let $s_0>0$ be as in Lemma \ref{lemma:reparam_gamma_j}. We use the following notation 
$$
\wt{\gamma}_1(s)=(\wt{\xi}_1(s),\wt{\eta}_1(s)) \qquad \text{and} \qquad \wt{\gamma}_2(s)=(\wt{\xi}_2(s),\wt{\eta}_2(s)). 
$$
We define
$$
\tau(s):=\frac{1}{\log \lambda}\log\frac{\wt{\xi}_2(s)}{\wt{\xi}_1(s)}, \qquad s\in (0,s_0)
$$ 
and therefore
\begin{equation}\label{eq:rho_0}
\frac{\lambda^{\tau(s)}\wt{\xi}_1(s)}{\wt{\xi}_2(s)}=1.
\end{equation}
Clearly, $\tau(s)$ is $C^\infty$ in $(0,s_0)$. 
Next, we will check that $\lambda^{\tau(s)}\wt{\eta}_1(s)=\wt{\eta}_2(s)$. Using that $H$ is a first integral and Lemma \ref{lemma:reparam_gamma_j} we have that
\[
H(\lambda^{\tau}\wt{\xi}_1(s), \mu^\tau\wt{\eta}_1(s) )
=
H(\wt{\xi}_1(s),\wt{\eta}_1(s))
=
s
=
H(\wt{\xi}_2(s),\wt{\eta}_2(s))
\]
from which we deduce that 
$$
\left(\frac{\lambda^{\tau}\wt{\xi}_1(s)}{\wt{\xi}_2(s)}\right)^{\log \mu^{-1}}=\left(\frac{\wt{\eta}_2(s)}{\mu^\tau\wt{\eta}_1(s)}\right)^{\log \lambda}.
$$
Taking $\tau=\tau(s)$, from \eqref{eq:rho_0}) we conclude
\begin{equation} \label{eq:mu_k}
\mu^{\tau(s)}\wt{\eta}_1(s)=\wt{\eta}_2(s),
\end{equation}
i.e. $\mathcal L ^\tau \left(\wt{\gamma}_1(s)\right)=\wt{\gamma}_2(s)$
as desired. Of course,
\begin{equation}\label{eq:limit}
\lim\limits_{s\to 0^+} \tau(s)= \frac{1}{\log \lambda}\log \lim\limits_{s\to 0^+} \frac{\wt{\xi}_2(s)}{\wt{\xi}_1(s)}
=\infty,
\end{equation}
since 
$\lim _{s\to 0^+} \wt{\xi}_2(s) = \xi_0>$ and
$\lim _{s\to 0^+} \wt{\xi}_1(s) = 0$. 
Using 
and the fact that $\tau(s)$ is a $C^\infty$ function in its domain we get from Bolzano's Theorem that there exist $k_0$ such that for every $k\geq k_0$ there exists $s_k$ such that  $t(s_k)=k$, and the lemma follows.
\end{proof}

The lemma above shows that the curves $\mathcal L^k\left(\wt{\gamma}_1(s)\right)$ and $\wt{\gamma}_2(s)$ intersect at the values $s=s_k$. Next lemma shows that these intersections, for $k$ large enough, are transversal.

\begin{lemma}\label{lemma:asym_angle}
Let $k$ be large enough. The following limits hold.
\begin{equation} \label{eq:limits}
\lim\limits_{s\to 0^+} \frac{\lambda^k\frac{d\wt{\xi}_1}{ds}(s)}{\frac{d\wt{\xi}_2}{ds}(s)}=\infty \qquad \text{and} \qquad \lim\limits_{s\to 0^+} \frac{\mu^k\frac{d\wt{\eta}_1}{ds}(s)}{\frac{d\wt{\eta}_2}{ds}(s)}=0. 
\end{equation}
In particular, for $k$ large enough, the curves $\mathcal L^k\left(\wt{\gamma}_1(s)\right)$ and $\wt{\gamma}_2(s)$ intersect transversally at the values $s=s_k$.
\end{lemma}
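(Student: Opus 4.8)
The plan is to compute explicitly the derivatives $\frac{d\wt\xi_j}{ds}$ and $\frac{d\wt\eta_j}{ds}$ using the reparametrizations $t=\sigma_j(s)$ from Lemma \ref{lemma:reparam_gamma_j}, and then track their leading-order behaviour as $s\to 0^+$. Recall from \eqref{eq:gammes_t} that $\wt\xi_1(s)=\sigma_1(s)^{\ell_1}(a_1+g_1(\sigma_1(s)))$ and $\wt\eta_1(s)=\eta_0+\sigma_1(s)$, while $\wt\xi_2(s)=\xi_0+\sigma_2(s)^{\ell_2}(a_2+g_2(\sigma_2(s)))$ and $\wt\eta_2(s)=\sigma_2(s)^{\ell_3}(a_3+g_3(\sigma_2(s)))$. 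From \eqref{eq:G_1} and \eqref{eq:G_2} we have $\sigma_1(s)=G_1^{-1}(s^{1/(\ell_1\log\mu^{-1})})$ and $\sigma_2(s)=G_2^{-1}(s^{1/(\ell_3\log\lambda)})$, with $G_j'(0)=c_j\ne 0$, so $\sigma_1(s)\sim c_1^{-1} s^{1/(\ell_1\log\mu^{-1})}$ and $\sigma_2(s)\sim c_2^{-1} s^{1/(\ell_3\log\lambda)}$ as $s\to 0^+$. The first quantity in \eqref{eq:limits} is $\lambda^k$ times a ratio whose numerator $\frac{d\wt\xi_1}{ds}$ is, to leading order, a constant multiple of $\sigma_1^{\ell_1-1}\sigma_1'(s)$, and whose denominator $\frac{d\wt\xi_2}{ds}$ is a constant multiple of $\sigma_2^{\ell_2-1}\sigma_2'(s)$; the second quantity is $\mu^k$ times $\frac{d\wt\eta_1}{ds}=\sigma_1'(s)$ over $\frac{d\wt\eta_2}{ds}$, which is a constant multiple of $\sigma_2^{\ell_3-1}\sigma_2'(s)$.

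The key computation is then to determine the exponents of $s$ in these ratios. Differentiating $H(\wt\gamma_j(s))=s$ is the cleanest route: since $\wt\xi_1(s)=G_1(\sigma_1(s))$ has $\frac{d\wt\xi_1}{ds}\sim c_1^{-1} s^{1/(\ell_1\log\mu^{-1})}\cdot \frac{1}{s}\cdot \frac{1}{\ell_1\log\mu^{-1}}$-type behaviour — more precisely, since $s=H(\wt\gamma_1(s))=\wt\xi_1^{\log\mu^{-1}}\wt\eta_1^{\log\lambda}$ and $\wt\eta_1\to\eta_0$, we get $\wt\xi_1(s)\sim (\eta_0^{-\log\lambda} s)^{1/\log\mu^{-1}}$, hence $\frac{d\wt\xi_1}{ds}\sim \mathrm{const}\cdot s^{1/\log\mu^{-1}-1}$. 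Similarly $\wt\eta_2(s)\sim(\xi_0^{-\log\mu^{-1}} s)^{1/\log\lambda}$, so $\frac{d\wt\eta_2}{ds}\sim\mathrm{const}\cdot s^{1/\log\lambda-1}$; while $\frac{d\wt\eta_1}{ds}=\sigma_1'(s)\sim\mathrm{const}\cdot s^{1/(\ell_1\log\mu^{-1})-1}$ and $\frac{d\wt\xi_2}{ds}\sim\mathrm{const}\cdot s^{\ell_2/(\ell_3\log\lambda)-1}$. Plugging in, the first ratio in \eqref{eq:limits} behaves like $\lambda^k\cdot\mathrm{const}\cdot s^{\,1/\log\mu^{-1}-\ell_2/(\ell_3\log\lambda)}$ and the second like $\mu^k\cdot\mathrm{const}\cdot s^{\,1/(\ell_1\log\mu^{-1})-1/\log\lambda}$. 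Because $\mu<1<\lambda$, the exponents $1/\log\mu^{-1}>0$ and $-1/\log\lambda<0$ have the right signs: in the first ratio the positive contribution $1/\log\mu^{-1}$ and in the second the negative contribution $-1/\log\lambda$ force the respective limits; one checks that since $\ell_2,\ell_3\ge 1$ the exponent $1/\log\mu^{-1}-\ell_2/(\ell_3\log\lambda)$ could in principle be of either sign, so the cleaner argument is to evaluate the ratios at $s=s_k$ where $\tau(s_k)=k$ and use that along this sequence $\lambda^{\tau(s_k)}=\lambda^k$, $\mu^{\tau(s_k)}=\mu^k$ balance exactly the powers coming from $\wt\xi_2/\wt\xi_1$ and $\wt\eta_2/\wt\eta_1$ respectively, by \eqref{eq:rho_0} and \eqref{eq:mu_k}.

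To finish, I would argue as follows. The curve $\mathcal L^k(\wt\gamma_1(s))$ has tangent vector $\bigl(\lambda^k\frac{d\wt\xi_1}{ds}(s),\mu^k\frac{d\wt\eta_1}{ds}(s)\bigr)$ and $\wt\gamma_2(s)$ has tangent vector $\bigl(\frac{d\wt\xi_2}{ds}(s),\frac{d\wt\eta_2}{ds}(s)\bigr)$, evaluated at $s=s_k$. Transversality at the intersection point fails iff these are parallel, i.e. iff $\lambda^k\frac{d\wt\xi_1}{ds}\cdot\frac{d\wt\eta_2}{ds}=\mu^k\frac{d\wt\eta_1}{ds}\cdot\frac{d\wt\xi_2}{ds}$. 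Dividing, this is equivalent to $\dfrac{\lambda^k\frac{d\wt\xi_1}{ds}(s_k)}{\frac{d\wt\xi_2}{ds}(s_k)} = \dfrac{\mu^k\frac{d\wt\eta_1}{ds}(s_k)}{\frac{d\wt\eta_2}{ds}(s_k)}$, but by \eqref{eq:limits} the left-hand side tends to $\infty$ and the right-hand side tends to $0$ as $k\to\infty$ (equivalently $s_k\to 0^+$), so for $k$ large enough they cannot be equal; hence the intersection is transversal. The main obstacle I anticipate is bookkeeping the precise powers of $s$ and the constants $c_1,c_2,a_i,\xi_0,\eta_0$ through the chain of reparametrizations without sign errors — in particular making sure the exponents $1/\log\mu^{-1}$ and $1/\log\lambda$ (rather than the $\ell_j$-dependent ones) are what ultimately controls the limits, which is what makes the $\lambda<1<\mu$... rather, $\mu<1<\lambda$ dichotomy give $\infty$ and $0$ respectively; using $H$ as a first integral to express $\wt\xi_1$ and $\wt\eta_2$ directly in terms of $s$ is the device that sidesteps most of this bookkeeping.
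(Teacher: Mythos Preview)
Your proposal is correct and follows essentially the same route as the paper: compute the leading-order asymptotics of $\sigma_j(s)$, $\sigma_j'(s)$ from \eqref{eq:G_1}--\eqref{eq:G_2}, differentiate the components of $\wt\gamma_j$ via the chain rule, substitute $\lambda^k = \wt\xi_2(s_k)/\wt\xi_1(s_k)$ and $\mu^k = \wt\eta_2(s_k)/\wt\eta_1(s_k)$ from \eqref{eq:rho_0} and \eqref{eq:mu_k}, and read off the resulting powers of $s$. Your shortcut of extracting $\wt\xi_1(s)\sim(\eta_0^{-\log\lambda}s)^{1/\log\mu^{-1}}$ and $\wt\eta_2(s)\sim(\xi_0^{-\log\mu^{-1}}s)^{1/\log\lambda}$ directly from $H(\wt\gamma_j(s))=s$ is a clean alternative to the paper's explicit chain-rule bookkeeping and yields the same exponents.

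You are also right to flag that the limits in \eqref{eq:limits}, read literally as $s\to 0^+$ with $k$ \emph{fixed}, need not hold: the exponent $1/\log\mu^{-1}-\ell_2/(\ell_3\log\lambda)$ in the first ratio has no a~priori sign. The paper's proof in fact does exactly what you propose --- it silently replaces $\lambda^k$ and $\mu^k$ by their expressions in terms of $s_k$ and computes the limit along the sequence $s=s_k\to 0^+$, obtaining the clean exponents $-\ell_2/(\ell_3\log\lambda)<0$ and $1/(\ell_1\log\mu^{-1})>0$. So your reading of the argument is the correct one, and the transversality conclusion via the $2\times 2$ determinant (equivalently, the non-equality of the two ratios for large $k$) matches the paper's intended use of the lemma.
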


\begin{proof}
To prove the lemma we first make some computations. From the proof of Lemma \ref{lemma:reparam_gamma_j}, and equations \eqref{eq:G_1} and \eqref{eq:G_2}, we deduce the following asymptotic behaviours (as $s \to 0$)

\begin{equation}
\begin{array}{rlcl} \sigma_1(s) \! \! \!& = c_1^{-1} s^{1/(\ell_1\log \mu^{-1})}+\dots ,&\quad    \sigma_1^\prime(s) \! \!\!& = c_1^{-1}\frac{1}{\ell_1\log \mu^{-1}}s^{1/(\ell_1\log \mu^{-1})}s^{-1}+ \dots,  \\
\\
\sigma_2(s) \! \!\! & =   c_2^{-1} s^{1/(\ell_3\log \lambda)}+\dots,& \quad   \sigma_2^\prime(s) \! \! \! & = c_2^{-1}\frac{1}{\ell_3\log \lambda}s^{1/(\ell_3\log \lambda )}s^{-1}+\dots, 
 \end{array}
\end{equation}
where 
$$
c_1 = a_1^{1/\ell_1}\eta_0^{\log \lambda / (\ell_1 \log \mu^{-1})} \qquad \text{and} \qquad c_2 = a_3^{1/\ell_3} \xi_0 ^{\log \mu^{-1} / (\ell_3 \log \lambda)}
$$ 
are non-zero. Hence, using \eqref{eq:gammes_t}, we can compute the following expressions for some of the terms of the numerator and denominator in \eqref{eq:limits}. 
On the one hand, we have
\begin{equation*}
\begin{split}
\frac{d\wt{\xi}_1}{ds} (s)&=\frac{d(\xi_1\circ \sigma_1)}{ds}(s) = \sigma_1^{\ell_1-1}(s)\left[\ell_1(a_1+g_1(\sigma_1(s)))+\sigma_1(s)g_1^{\prime}(\sigma_1(s))\right] \sigma_1^{\prime}(s) \\ 
& =\ell_1a_1 \sigma_1^{\ell_1-1}(s) \sigma_1^{\prime}(s) +\dots =  \frac{a_1}{c_1^{\ell_1}\log\mu^{-1}}s^{1/\log\mu^{-1}}s^{-1} +\dots , \\
\frac{d \wt{\xi}_2}{ds}(s)&=\frac{d (\xi_2 \circ \sigma_2)}{ds}(s) = \sigma_2^{\ell_2-1}(s)\left[\ell_2(a_2+g_2(\sigma_2(s)))+\sigma_2(s)g_2^{\prime}(\sigma_2(s))\right]
\sigma_2^{\prime}(s)    
\\
&= \ell_2a_2 \sigma_2^{\ell_2-1}(s) \sigma_2^{\prime}(s) +\dots =  \frac{\ell_2 a_2}{\ell_3 c_2^{\ell_2}\log \lambda}s^{\ell_2/(\ell_3\log \lambda)}s^{-1} + \dots.
\end{split}
\end{equation*}
On the other hand, we have
\begin{equation*}
\begin{split}
\frac{d\wt{\eta}_1}{ds} (s)&=\frac{d(\eta_1 \circ \sigma_1) }{ds} (s) = \sigma^{\prime}_1(s) =c_1^{-1} \frac{1}{\ell_1\log \mu^{-1}} s^{1/\left(\ell_1\log\mu^{-1}\right)}s^{-1} + \dots, \\
\frac{d\wt{\eta}_2}{ds} (s)&=\frac{d(\eta_2\circ \sigma_2)}{ds}(s) =  \sigma_2^{\ell_3-1}(s)\left[\ell_3(a_3+g_3(\sigma_2(s)))+\sigma_2(s)g_3^{\prime}(\sigma_2(s))\right] \sigma_2^{\prime}(s) \\
&= \ell_3a_3 \sigma_2^{\ell_3-1}(s)\sigma_2^{\prime}(s) +\dots =\frac{a_3}{ c_2^{\ell_3}\log \lambda}s^{1/\log \lambda}s^{-1} + \dots.
\end{split}
\end{equation*}

From \eqref{eq:rho_0} and \eqref{eq:mu_k} evaluated at the values of $s= s_k$ corresponding to $\tau(s_k)=k$ we can also conclude that
$$
\lambda^k=\frac{\xi_2(\sigma_2(s_k))}{\xi_1(\sigma_1(s_k))}= \frac{\xi_0}{a_1}\sigma_1^{-\ell_1}(s_k) +\dots = \frac{\xi_0 c_1^{\ell_1}}{a_1}s^{-1/\log\mu^{-1}}+\dots 
 $$ 
 and
 $$
 \mu^k=\frac{\eta_2(\sigma_2(s_k))}{\eta_1(\sigma_1(s_k))} = \frac{a_3}{\eta_0}\sigma_2^{\ell_3}(s_k)+\dots  =\frac{a_3c_2^{-\ell_3}}{\eta_0}s_k^{1/ \log \lambda} +\dots .
 $$
All together allows us to compute the limits of the statement
\begin{equation*}
\begin{split}
&\lim\limits_{s\to 0^+} \lambda^k\frac{\frac{d\wt{\xi}_1}{ds}(s)}{\frac{d\wt{\xi}_2}{ds}(s)}=\lim\limits_{s\to 0^+} \frac{\xi_0 \ell_3\log \lambda}{\ell_2a_2c_2^{-1}r_2^{\ell_2-1}\log \mu^{-1}}s^{-\ell_2/(\ell_3\log \lambda)}=\infty \\
&\lim\limits_{s\to 0^+} \mu^k\frac{\frac{d\wt{\eta}_1}{ds}(s)}{\frac{d\wt{\eta}_2}{ds}(s)}=\lim\limits_{s\to 0^+} \frac{c_1^{-1}\log \lambda}{\eta_0 \ell_1\log \mu^{-1}}s^{1/(\ell_1\log \mu^{-1})}=0.
\end{split}
\end{equation*}
\end{proof}

\begin{proof}[End of the proof of Theorem B]
Let  $k\geq k_0$ satisfy the conditions of the previous lemmas.  We consider the sequence of points 
\begin{equation}\label{eq:s_k}
\Bigl\{\wt\gamma_j(s_k)=\left(\wt{\xi}_j(s_k),\wt{\eta}_j(s_k)\right)\Bigr\}_{k\geq k_0},\qquad  j=1,2.
\end{equation}
From Lemma \ref{lemma:asym_angle},
$ \mathcal L^k\left(\wt{\gamma}_1(s_k)\right)$ and $\wt{\gamma}_2(s_k)$ intersect transversally. It follows from \eqref{eq:gammes_t} and the lemmas above than 
\begin{equation}\label{eq:lim_gammes}
\wt{\gamma}_1(s_k)\to (0,\eta_0) \qquad \text{and} \qquad 
\wt{\gamma}_2(s_k) \to (\xi_0,0).
\end{equation} 
               We introduce the following notation 
\begin{equation}\label{eq:zks}
{\bf \wh  z}_k:= \left(\T^{-1}\circ \Theta^{-1} \circ \Psi^{-1}  \right) \left(\wt{\gamma}_1(s_k)\right) \in \wh{W}^u \qquad \text{ and} \qquad {\bf \wt z}_k:= \left(\T^{-1}\circ \Theta^{-1} \circ \Psi^{-1}  \right) \left(\wt{\gamma}_2(s_k)\right) \in \wt{W}^s, 
\end{equation}
and note that Lemma \ref{lemma:moving_Lrho} implies that
\begin{equation}\label{eq:zkzk_k}
{\bf \wt z}_k = T_d^{2k} \left(\wh {\bf z}_k\right),\qquad k\geq k_0.
\end{equation}
To conclude the proof of Theorem B we argue as follows. We know that  $\{{\bf \wh  z}_k\}_{k\geq k_0} \in W^u$ and  $\{{\bf \wt z}_k\}_{k\geq k_0} \in W^s$. Also the stable and  unstable manifolds are invariant sets for the map $T_d^2$. Finally, Lemma \ref{lemma:asym_angle}, definition \eqref{eq:zks} and equation \eqref{eq:zkzk_k} imply that  $\{{\bf \wh  z}_k\}_{k\geq k_0}$ and  $\{{\bf \wt z}_k\}_{k\geq k_0}$ correspond to transversal intersections of the stable and unstable manifolds of  $p_1$ accumulating to the points $q_s$ and $q_u$, respectively. See Figure \ref{fig:zks}.
\end{proof}

\begin{figure}[ht]
    \centering
     \includegraphics[width=0.85\textwidth]{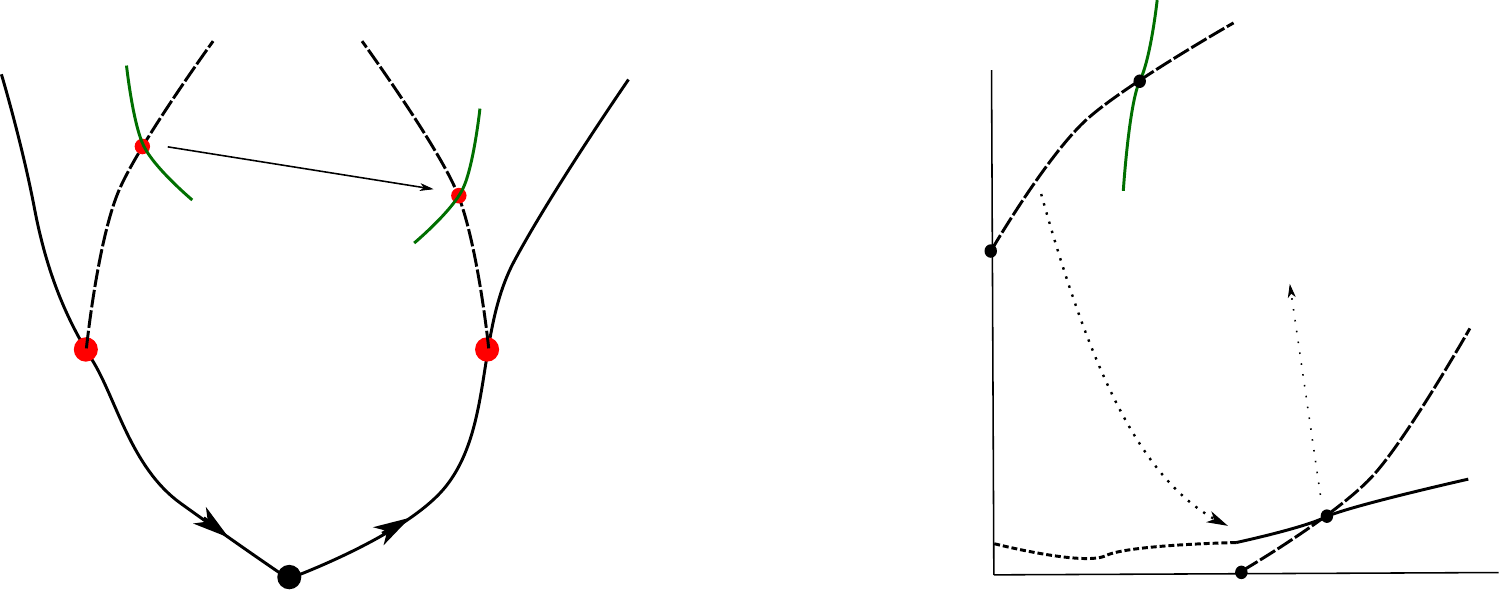}
      \put(-305,-5){\tiny $p_1$}
      \put(-370,60){\tiny $q_s$}
      \put(-245,60){\tiny $q_u$}
      \put(-300,140){\tiny $\wt{W}^s$}
      \put(-330,140){\tiny $\wh{W}^u$}
       \put(-359,124){\tiny $W^u$}
      \put(-253,115){\tiny $W^s$}
       \put(-353,110){\tiny $\wh{z}_k$}
      \put(-257,98){\tiny $\wt{z}_k$}
       \put(-310,96){\tiny $T_d^{2k}$}
       \put(-359,124){\tiny $W^u$}
       \put(-355,20){\tiny $W_{\rm loc}^s$}
        \put(-260,20){\tiny $W_{\rm loc}^u$}
       \put(-132,-3){\tiny $0$}
      \put(2,2){\tiny $\xi$}
       \put(-130,134){\tiny $\eta$}
       \put(-150,90){\tiny $(0,\eta_0)$}
       \put(-70,-4){\tiny $(\xi_0,0)$}
       \put(-112,50){\tiny $\mathcal{L}^k$}
       \put(-59,140){\tiny $\wt{\gamma}_1(s)$}
       \put(-115,130){\tiny $\wt{\gamma}_1(s_k)$}
       \put(-10,70){\tiny $\wt{\gamma}_2(s)$}
        \put(-83,82){\tiny $\mathcal{L}^k(\wt{\gamma}_1(s_k))=\wt{\gamma}_2(s)$}
         \put(-118,15){\tiny $\mathcal{L}^k(\wt{\gamma}_1(s))$}
                \caption{\small{Sketch of the proof of Theorem B with the points ${\bf \wh {z}}_k$ and  ${\bf \wt{z}}_k$ being transversal intersections of the stable and unstable manifolds of $p_1$ (for $T_d^2$).}}       
    \label{fig:zks}
    \end{figure}

\section{Proof of Theorem C} \label{section:proof_C}

The proof of Theorem C is based on   Moser's version of Birkhoff-Smale Theorem, concretely we will apply Theorem 3.7 in \cite{Mos} in our setup. The key difficulty 
comes from  the fact that, in our case, $T_d^{-1}$ is not differentiable on the line $\{y=x\}$. Therefore, we need to make sure that the construction in  \cite{Mos} can be made so that we only have to deal with our map and its inverse in a domain that does not meet the line $\{y=x\}$. However, notice  that both the stable and the unstable manifolds of the two-cycle $\{p_0,p_1\}$ cross the line $y=x$. See Figure \ref{fig:intersection_global}.

\begin{remark}
	It follows from Lemma \ref{lemma:q1_q2_local} that $W^s$ as well as $W^u$ intersect the line $\{y=x\}$ at isolated points. In other words, finite length pieces of $W^s$ and $W^u$ only contain  finitely many intersections with $\{y=x\}$.
\end{remark}

Let $U$ be a neighbourhood of $p_1$ as in Lemma \ref{lemma:sternberg} where we can take local coordinates
for which $p_1$ is located at $(0,0)$ and 
the stable and the unstable manifolds of $(0,0)$ are the vertical and horizontal axes, respectively. Assume also that  $U\cap \{y=x\} =\emptyset$. 

In the following items, we summarize notation and facts of the constructions we have made in the previous section that will be important in the proof of Theorem C. See Figure \ref{fig:step2}.

\begin{itemize}
	\item[(a)]
	Let $q$ be the homoclinic point given in Theorem A and  $q_s= T_d^{2\alpha }(q)\in W^s_\loc\cap W^u$ and $q_u= T_d^{-2\beta }(q)\in W^u_\loc\cap W^s$, $\alpha, \beta \in \N$, be the points given in Lemma \ref{lemma:q1_q2_local}. 
	Let $\wh{W}^u \subset W^u$ and $\wt{W}^s\subset W^s$ introduced after the statement of Lemma \ref{lemma:q1_q2_local}. Then 
	$q_s\in W^s_\loc\cap \wh W^u$ and $q_u\in W^u_\loc\cap \wt W^s$. Moreover, taking $n_0= \alpha + \beta$ we have that 
	$$ 
	T_d^{2n_0}\left(q_u\right) = q_s.
	$$  
	\item [(b)] Let $\wh{\bf z}_k \in \wh W^u$ and  $ \wt{\bf z}_k\in \wt W^s$ be the points introduced in 
	\eqref{eq:zks}. We have that 
	$\wt{\bf z}_k$ and $\wh{\bf z}_k$ are transversal homoclinic points, 
	\[
	\lim\limits_{k\to \infty}  \hat{\bf z}_k =q_s \qquad \mbox{and} \qquad \lim\limits_{k\to \infty} \tilde{\bf z}_k =q_u.
	\]
	and 
	\[
	T_d^{2k}\left(\wh{\bf z}_k\right) = \wt{\bf z}_k, 
	\qquad 
	T_d^{2j}(\wh{\bf z}_k) \in U\quad \text{ for all } \quad j=1,\ldots ,k.
	\]
	
	\item[(c)] 
	Let
	$\wh{W}^s = T_d^{2n_0}(\wt{W}^s )$  and 
	$\wt{W}^u= T_d^{-2n_0} (\wh{W}^u)$.

	\item[(d)]  We consider the points 
	$$
	\wh{\bf w}_k:=T_d^{2n_0}(\wt{\bf z}_k) \in \wh W^s \cap  W^s_\loc \cap W^u  \qquad 
	\mbox{and} \qquad   \wt{\bf w}_k:=T_d^{-2n_0}(\wh{\bf z}_k) \in \wt W^u \cap W^u_\loc \cap W^s .
	$$
	As a consequence of the previous items we have
	$$
	T_d^{2n_0+2k+2n_0}( \wt{\bf w}_k)= \wh{\bf w}_k.
	$$
	\item [(e)]
	For any $m_u\geq 1$ and $m_s\geq 1$, if we write,
	$$
	{\bf w}_u:=T_d^{-2m_u}(\wt{\bf w}_k)\in W^u_\loc  \qquad 
	\mbox{and} \qquad  {\bf w}_s:=T_d^{2m_s}(\wh{\bf w}_k) \in W^s_\loc,
	$$
	
	we have   that
	\begin{equation}\label{eq:phi_outside}
		T_d^{2m_s+2n_0+2k+2n_0+2m_u}({\bf w}_u)={\bf w}_s \qquad \text{or} \qquad T_d^{-2m_s-2n_0-2k-2n_0-2m_u}({\bf w}_s)={\bf w}_u.
	\end{equation}
	In particular, considering $m_u$ and $m_s$ as large as necessary we know that the corresponding points ${\bf w}_u$ and ${\bf w}_s$ are as close as needed to the point $p_1$, and, by the $\lambda$-Lemma \cite{PALIS69} they are  transverse  homoclinic points  with tangent vectors close to the tangent vectors of the local manifolds. 
\end{itemize}

\begin{proof}[Proof of Theorem C]
Let $U_u\subset U$ be a neighbourhood of $q_u$. 
Assume it is sufficiently small so that
$U_s:=T_d^{2n_0}(U_u)$ is contained in $U$. Clearly, $U_s$  is a neighbourhood of $q_s$.  From items (b) and (d) there exists  $k_0>0$ such that $\wh{\bf z}_k,\wh{\bf w} _k \in U_s$ and $\wt{\bf z}_k,\wt{{\bf w}}_k \in U_u$ for all  $k\geq k_0$. 

Let  $k$ be  large enough and let $V$ be a small neighbourhood of $\bf w_u$ (suitable size will be decided later on). Then $T_d^{2m_u}(V)$ is a neighbourhood of $\wt{{\bf w}}_k \subset U_u \subset U$ and,  if we denote 
$$
m_0:=m_u+n_0+k_0+n_0+m_s,
$$ 
then $T_d^{2m_0}(V)$  is a neighbourhood of ${\bf w}_s$.

Let $R\subset V$ be a pseudo-rectangle in \textit{the first quadrant} attached to ${\bf w}_u$  whose boundaries are given by pieces of $W^s$ and $W^u$ and the others are just straight lines (parallel to the tangent lines of $W^s$ and $W^u$ at ${\bf w}_u$).

Define
\begin{equation}\label{eq:R}
	\wt{R}:=T_d^{2m_0}(R).
\end{equation}
If we iterate $\wt R $ by $T_d^2$, while staying in $U$ where the dynamics is $C^\infty$ conjugate to the one of the linearization at $p_0$, we eventually meet $R$.

Following Moser we introduce the {\it transversal map} $ \wt{\phi}$. Given $\xi\in R$ we consider a number of iterates bigger that $m_0$ of $T^2_d$. By construction $T^{2m_0}_d(\xi) \in \wt R$. Next we consider   $k=k(\xi)>m_0$ to be the smallest integer such that $T_d^{2k}(T^{2m_0}_d(\xi)) \in R$ and $T_d^{2j}(T^{2m_0}_d(\xi)) \in U$, $1\le j\le k$, if it exists. We denote by $\D$ the set of $\xi\in R$ such that $k(\xi)$ exists and we define   
$
\wt{\phi}: \mathcal D \subset  R  \to R
$ 
by 
\begin{equation}\label{eq:phi_tilde}
\wt{\phi} (\xi) = T_d^{2m_0+ 2k(\xi)}(\xi), \qquad \xi\in \D.
\end{equation}

To apply Moser's Theorem we should check that the restriction of $T_d$ to $\bigcup_{k=1}^{2m_0} T_d^k(\D)$ is a  $\mathcal C^\infty$ diffeomorphism or equivalently that $\bigcup_{k=1}^{2m_0} T_d^k(\D) \cap \{y=x\} =\emptyset$.

That is, we should prove that, by choosing $R$ small enough the points travelling from $R$ to itself would not meet the line $\{y=x\}$, where $T_d^{-1}$ is not smooth.

Since $\xi \in \mathcal D \subset R\subset V$ and $V$ is a small neighbourhood of ${\bf w}_u$ the iterates of $\xi \in \mathcal D$ will travel 
following the orbit of ${\bf w}_u$  
$$
{\bf w}_u \xmapsto{T_d^{2m_u}}  \wt{{\bf w}}_k \xmapsto{T_d^{2n_0}}  {\wh {\bf z}}_k \xmapsto{T_d^{2k}}  {\wt {\bf z}}_k \xmapsto{T_d^{2n_0}}  \wh{{\bf w}}_k \xmapsto{T_d^{2m_s}}  {\bf w}_s 
$$
until they arrive to $\wt R$. Then, by item (b),  from $\wt R$ to $R$ the iterates will stay in $U$.
Hence, the only iterates of points $q\in \mathcal D \subset R$ which might fall in the line $\{y=x\}$ are the $2n_0$ iterates needed  to go from $\wt{\bf w}_k$ to $\wh{\bf z}_k$ and the  $2n_0$ ones from $\wt{{\bf z}}_k$ to $\wh{{\bf w}}_k$. To finish the argument, we distinguish two cases.

\vglue 0.2truecm
\noindent{\it Case 1.} The finite set $\{T_d^{j}(q_u)\mid\, 0 \leq j \leq  2n_0\}$ does not intersect $\{y=x\}$. By continuity there exists a sufficiently small open neighbourhood $U_{u}$ of $q_u$ such that the open set 
$$
\mathcal U_{u}:= \bigcup_{j=0}^{2n_0} T_d^{j}(U_{u})
$$ 
does not intersect $\{y=x\}$ either. 
Of course, by items (b) and (d) there are infinitely many points of the sequences 
$\{\wt{\bf w}_k\}, \{\wt{\bf z}_k\}$ belonging to $\mathcal U_{u}$. 

Choose $V$ (the neighbourhood of ${\bf w}_u$ above) small enough and $k$ large enough such that $T_d^{2m_u}(V) \subset U_u$ and such that $T_d^{2j}(V)$   belong to $U$, for $j=0,\ldots ,m_u$. Choose $R\subset V$ and define $\wt{R}$ as in \eqref{eq:R}. By construction,  the map $\wt{\phi}$ is well defined and $\mathcal C^\infty$, it has an inverse $\wt{\phi}^{-1}: R\cap \wt{\phi}(R) $ which is also $\mathcal C^\infty$  since no iterates of $T_d$ or $T_d^{-1}$ in the definition of $\wt{\phi}$ (see \eqref{eq:phi_tilde}) intersect $\{y=x\}$.

\begin{figure}[ht]
	\centering
	\includegraphics[width=0.8\textwidth]{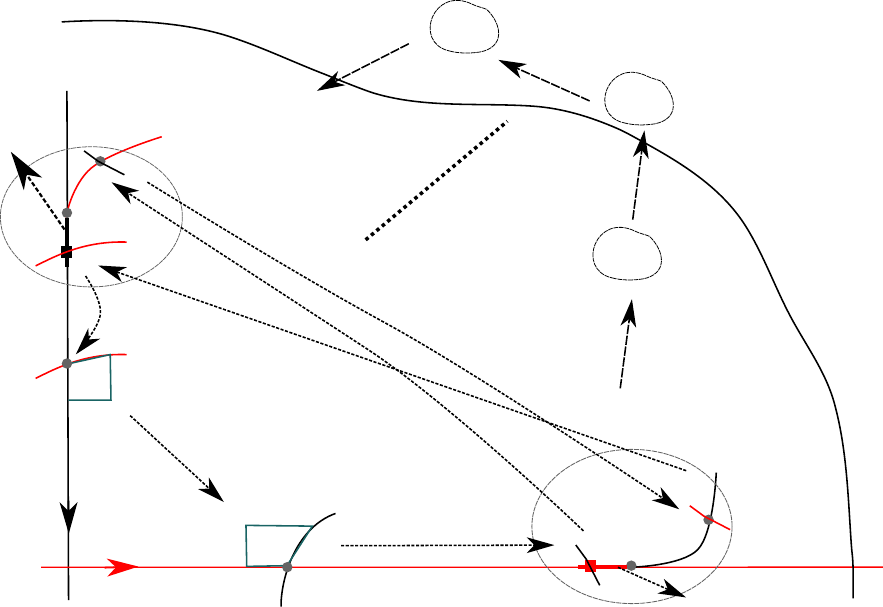}
	 \put(-150,190){\tiny $y=x$}
	 \put(-290,190){\tiny $\wh{W}^u$}
	  \put(-318,168){\tiny ${\wh{\bf z}}_k$}
	   \put(-341,145){\tiny ${ \wh{\bf w}}_k$} 
	     \put(-353,190){\tiny $\wh{W}^s$} 
	   \put(-323,158){\tiny $q_s$}
	   \put(-285,170){\tiny $U_{q_s}$}
	  \put(-340,102){\tiny ${\bf w_s}$}
	  \put(-321,90){\tiny $\wt{R}$}
	  \put(-250,25){\tiny $R$}
	  \put(-336,10){\tiny $p_1$}
	   \put(-46,105){$U$} 
	   \put(-146,235){$\tiny{T^{2j}_d(U_{u})}$}
	   \put(-146,222){$\tiny j=1,\ldots,  n_0-1$}
           \put(-355,40){$W_{\rm loc}^s$}
           \put(-310,2){$W_{\rm loc}^u$}
           \put(-237,10){\tiny $\bf {w}_u$}
           \put(-120,22){\tiny ${ \wt{\bf w}}_k$}
           \put(-77,0){\tiny $\wt{W}_u$}
            \put(-100,22){\tiny $q_u$}
            \put(-95,67){$\tiny{T_d^{-2n_0}(U_{u})}$}
            \put(-77,42){\tiny ${ \wt{\bf z}}_k$}
            \put(-180,28){\tiny $2m_u$}
            \put(-160,50){\tiny $2n_0$}
            \put(-260,110){\tiny $2n_0$}
            \put(-170,100){\tiny $2\ell_k$}
             \put(-310,115){\tiny $2m_s$}
             \put(-280,60){\tiny $k(\xi)$}
	\caption{\small{The illustration of all items  (a)-(f).}}       
	\label{fig:step2}
\end{figure}

\noindent{\it Case 2.} 
The set $\{T_d^{j}(q_u)\mid\, 1\le j \le 2n_0\}$ intersects  $\{y=x\}$.
Let $\{T_d^{\ell_j}(q_u)\mid\, j=1,\ldots, r\}$,
for $0<\ell_1  <\ldots <\ell_r<2n_0$, for some $r\geq 1$, be the intersection. 
We recall that the 
stable and unstable manifolds of the point $p_1$ have discrete intersection with $\{y=x\}$.
Again, items (b) and (d) imply that we can choose $k_0$ large enough such that there exist two small open neighbourhoods $U_{s}$ of $q_s$ and  $U_{u}$ of $q_u$ such that for all $k\geq k_0$ we have that 
\begin{equation*}
	\begin{split}
		\{T_d^{j}(U_{u}\setminus \{q_u\})\mid \, j=0,\ldots ,2n_0\} \cap \{y=x\} =\emptyset \qquad \mbox{and} \qquad \wt{\bf w}_k, \wt{\bf z}_k \in U_{u}.
	\end{split}
\end{equation*}
We are in the same situation as in the previous case. Therefore, choosing $V$ (the neighbourhood of ${\bf w}_u$ above) small enough and  $k$  large enough we obtain the regularity claim for $\wt{\phi}$
and $\wt{\phi}^{-1}$.

Now, Theorem 3.7 in \cite {Mos} implies that there is a Cantor set $\I$ contained  in $R$ and a homeomorphism from $\I$ to the space of sequences of $N$ symbols ($2\le N\le \infty$) which conjugates $\wt \phi $ with the Bernoulli shift and, as a consequence,
there is a dense set $\PP$ of periodic orbits of 
$\wt \phi $,  and therefore of $T_d^2$ and $T_d$ in $\I$.

Moreover, Theorem 3.8 in  \cite{Mos} implies that there is a dense subset $\HH$ of homoclinic points to $p_1$ in $\I$.  We recall from Theorem A (b) of \cite{FGJ24} that  $W^s_{p_1} \subset \partial \Omega$. Finally, since 
$\HH \subset W^s_{p_1}$, 
$$
\PP\subset \ol \PP = \I = \ol \HH \subset \ol{\partial \Omega} = {\partial \Omega},
$$
that is, the boundary of $\Omega$ has infinitely many periodic orbits with arbitrary high period.

\end{proof}

\bibliographystyle{alpha}
\bibliography{biblio}

\end{document}